\newtheorem{theorem}{Theorem}[section]
\newtheorem{lemma}[theorem]{Lemma}
\newtheorem{proposition}[theorem]{Proposition}\date{\today}
\newtheorem{corollary}[theorem]{Corollary}
\newtheorem{remark}[theorem]{Remark}
\newtheorem{assumption}{Assumption}
\newtheorem{example}[theorem]{Example}
\newcommand{\E}{\ensuremath{\mathbb{E}}}
\newcommand{\Prob}{\ensuremath{\mathbb{P}}}
\newcommand{\Var}{\ensuremath{\mathrm{Var}}}
\newcommand{\Cov}{\ensuremath{\mathrm{Cov}}}
\newcommand{\ul}{\ensuremath{\lfloor t/\Delta\rfloor}}
\newcommand{\ulT}{\ensuremath{\lfloor T/\Delta\rfloor}}
\newcommand{\R}{\ensuremath{\mathbb{R}}}
\newcommand{\N}{\ensuremath{\mathbb{N}}}
\newcommand{\indicator}{\ensuremath{\mathbb{I}}}
\newcommand{\cA}{\mathcal{A}}
\newcommand{\cD}{\ensuremath{\mathcal{D}}}
\newcommand{\cF}{\mathcal{F}}
\newcommand{\cL}{\ensuremath{\mathcal{L}}}
\newcommand{\cO}{\ensuremath{\mathcal{O}}}
\newcommand{\cT}{\ensuremath{\mathcal{T}}}
\newcommand{\dd}{\,\mathrm{d}}
\newcommand{\dom}{\mathrm{dom}}
\newcommand{\tr}{\mathrm{tr}}
\newcommand{\diam}{\mathrm{diam}}
\newcommand{\inter}{\mathrm{int}}
\newcommand{\ver}{\mathrm{ver}}
\newtheorem*{lemma*}{Lemma} 
\newtheorem*{remark*}{Remark} 
\newtheorem*{proposition*}{Proposition} 
\newtheorem*{theorem*}{Theorem}
\newtheorem*{corollary*}{Corollary}
\title[Nonparametric Inference for Noise Covariance Kernels]{
Nonparametric Inference for Noise Covariance Kernels in Parabolic SPDEs using Space-Time Infill-Asymptotics
}
\author{Andreas Petersson}
\address{Department of Mathematics, Linnaeus University, 351 95, V\"axj\"o, Sweden}
\author{Dennis Schroers}
\address{Institute of Finance and Statistics and Hausdorff Center for Mathematics, University of Bonn, Adenauerallee 24-42, 53113 Bonn, Germany}
\subjclass{60H15, 62M20, 62G05, 62G10, 35R60}
\keywords{Stochastic partial differential equations (SPDEs), noise covariance estimation, goodness-of-fit test, nonparametric statistics, space-time infill asymptotics, functional data analysis}
\begin{document}

	\begin{abstract}
		We develop an asymptotic limit theory for nonparametric estimation of the noise covariance kernel in linear parabolic stochastic partial differential equations (SPDEs) with additive colored noise, using space–time infill asymptotics. The method employs discretized infinite-dimensional realized covariations and  requires only mild regularity assumptions on the kernel to ensure consistent estimation and asymptotic normality of the estimator. On this basis, we construct omnibus goodness-of-fit tests for the noise covariance that are independent of the SPDE’s differential operator. Our framework accommodates a variety of spatial sampling schemes and allows for reliable inference even when spatial resolution is coarser than temporal resolution. 
	\end{abstract}
	\maketitle
	
\section{Introduction and preliminaries}
\subsection{Introduction}
Parabolic stochastic partial differential equations (SPDEs) are often used in spatiotemporal modeling, where they frequently emerge as deterministic PDEs perturbed by additive random noise. This noise is sometimes chosen to be white in time and colored in space, with its spatial covariance specified through a tractable parametric form such as the Matérn class or a fractional inverse of the Laplacian. This choice is delicate, as misspecifications may propagate through subsequent analyzes and predictions. Yet, model validation and selection of noise covariance models is difficult, since research on nonparametric methods for their estimation is sparse. In this article, we show that such methods can be derived under space–time infill asymptotics and develop a framework for omnibus goodness-of-fit testing of noise covariance models.

We consider linear parabolic SPDEs with colored additive noise on a bounded Lipschitz domain $\cD \subset \R^d$  for $d \in \{1,2,3\}$:
\begin{equation}
	\label{eq:spde}
	\dd X_t + A X_t = \dd W_t, \quad t \ge 0,
\end{equation}
where $X_t$ takes values in the Hilbert space $H=L^2(\cD)$. Here $A$ denotes a possibly unknown elliptic (unbounded) operator and $W_t$ is a $Q$-Wiener process with spatial trace-class covariance $Q$ which is the object of interest throughout.
The precise conditions on the coefficients of the equation are described in Section \ref{Sec: Parabolic SPDEs on bounded domains}.

The main novelty of our work compared to existing literature is that we estimate $Q$---or equivalently the kernel $q$ such that $Qf(x)=\int_{\mathcal D} q(x,y)f(y)dy$ for all $f\in L^2(\mathcal D)$ and $x\in \mathcal D$--- 
globally in Hilbert-Schmidt norm (resp. $L^2(\mathcal D^2)$ norm) 
while $A$ might be unknown, which is often the case in practice. A canonical example are stochastic advection-diffusion equations for which 
$$A= -a \Delta_\cD  + \nabla \cdot \mathbf{b}+ c ,$$
where $\Delta_\cD$ and $ \nabla \cdot$ are respectively the Laplace and divergence operator and for which the diffusivity,  damping and velocity parameters $a,c>0$ and $\mathbf{b}\in \mathbb R^d$ are typically unknown a priori. 
In such cases, our theory enables omnibus goodness-of-fit testing for the noise covariance model prior to the estimation of $A$. Such tests are of practical importance,  since spatially colored noise is often considered in applications of SPDEs to space-time data analysis (c.f. \cite{Lingren2011}, \cite{lindgren2020}, \cite{Sigrist2012}, \cite{Sigrist2015} \cite{liu2022}).

As an estimator for the operator $Q$, we develop an asymptotic theory for the infinite-dimensional realized covariation
\begin{align}\label{eq:realized-variation}
	RV_t^{\Delta,O}:= \sum_{i=1}^{\ul} (O X_{i\Delta}-O X_{(i-1)\Delta})^{\otimes 2},\quad t\geq 0,
\end{align}
where $\Delta>0$ is a temporal resolution with which the data are observed,   $f^{\otimes 2}=\langle f,\cdot\rangle_{L^2(\cD)} f$ for all $f\in L^2(\cD)$ and $O X_{0},...,O X_{\Delta\ul}$ are approximations of the functional data  $ X_{0},..., X_{\Delta\ul}$ derived from discrete space-time data.  For instance,  if $\mathcal D=(0,1)$ and $t=1$  we might have discrete space-time data
$$X_{i\Delta}(j\delta),\quad j=0,1,...,\lfloor 1/\delta\rfloor, i=0,1,...,\lfloor 1/\Delta\rfloor,$$
where $\delta >0$ is a spatial resolution and $O$ is the operator that conducts a linear interpolation in space.   A precise description of the sampling schemes that we consider (perfectly observed functional data, spatial averages and pointwise spatial sampling) can be found in Section \ref{Sec: Sampling setting}.

We show under infill-asymptotics (spatial and temporal resolution converging to $0$) that $RV_t^{\Delta,O}/t$ consistently estimates $Q$ under mild regularity assumptions and provide rates of convergence that depend on the regularity of $Q$ and the sampling setting at hand. 
On that foundation, we establish central limit theorems which are the basis of asymptotic omnibus goodness-of-fit tests via the test statistic $T_n:= n\|RV_t^{\Delta,O}-Q\|^2_{\cL_2(H)}$ where $\|\cdot\|_{\cL_2(H)}$ denotes the Hilbert-Schmidt norm.  To be precise, under certain regularity conditions on $Q$, we can test for 
$$H_0: Q=Q_0\quad vs\quad  H_1: Q\neq Q_0,$$
where $Q_0$ is a hypothesized noise covariance
as well as the joint hypothesis
\begin{align*}
	H_0: Q =Q_{\theta}  \text{ for some }\theta \in \Theta \quad \text{ vs. }\quad H_1: Q \neq Q_{\theta}  \text{ for all }\theta\in \Theta,
\end{align*}
for a parametric class of covariances $\{Q_{\theta}:\theta\in \Theta\}$ and a set of parameters $\Theta$.

We demonstrate the finite sample performance of the proposed  tests and of  $RV_t^{O,\Delta}/t$ as  an estimator of $Q$ in a simulation study in the context of Mat{\'e}rn  and  inverse fractional Laplacian covariance classes. Both our theory as well as the results from the simulation study show that due to the regularity induced by the analyticity of $A$ and the trace-class property of $Q$ reliable estimation of the latter is possible even when spatial resolution is much coarser than temporal resolution.

To the best of our knowledge the approach to goodness-of-fit testing of noise covariance models via infinite-dimensional realized covariations is new. 
We believe that the techniques and methods in this article can be used as an ansatz for statistical noise analyses in the context of more general SPDEs governed by a leading second-order differential operator and lead to various applications.   
The proofs of our limit theory make use of tools from polynomial approximation theory in a new context,  which we conjecture  to be generalizable in various ways and serve as a fundament to treat noise kernel estimation for more general dynamics including  anisotropic stochastic advection-diffusions,  nonlinear drifts or stochastic volatility. 

Our results on the asymptotic behaviour of realized covariations complement the existing statistical theory for SPDEs. In the majority of works on statistics for SPDEs the focus is on the estimation of finite-dimensional functionals of the noise covariance. In these settings, $Q$ is typically not trace-class and the solution process has the same regularity as the stochastic heat equation with space-time white noise. Examples include \cite{Hildebrandt2021}, \cite{hildebrandt2023}, and \cite{Uchida2020}, who used space-time infill asymptotics to estimate volatility ($Q=\sigma I$) and other parameters of linear or semilinear SPDEs. \cite{Bibinger2019} studied high-frequency temporal samples at finitely many spatial points for one-dimensional linear SPDEs with $Q=I$ and time-varying deterministic volatility. \cite{Chong2020} and \cite{ChongDalang2020} extended this to stochastic volatility kernels, while \cite{Cialenco2020} considered discrete space-time observations with possibly multiplicative noise.
Related contributions on parameter estimation for the operator $A$ in non–trace-class settings include the approaches via local measurements (\cite{altmeyer2021}, \cite{altmeyer2024}, \cite{altmeyer2023}, \cite{Reiss23}), small-diffusivity (\cite{Gaudlitz2023}),  and the spectral approach (see \cite{Cialenco2018} for a survey).
Global estimation of trace-class $Q$ in operator norms as in this article is considered in \cite{Benth2022}, \cite{BSV2022}, \cite{Schroers2024}, and \cite{Schroers2024b}. These works assume that the differential operator $ A$ is known a priori and that continuous observations in space are available (or at least, well-approximated). Our results show that these conditions are usually not needed when $ A$ is elliptic.
Our theory can also be contrasted with the classical limit theory for finite-dimensional semimartingales in the context of convergence rates. To be precise, when $X$ takes values in $\mathbb{R}^m$ and $A,Q$ are matrices, $X$ is an Ornstein–Uhlenbeck process, and it is well known that  the variance of realized covariation vanishes at rate $\sqrt{\Delta}$ and the bias at rate $\Delta$ (as for more general It{\^o} semimartingales, c.f. \cite[Theorem  5.4.2]{JacodProtter2012}). In our framework, by contrast, the variance (at least when $O=I$) still vanishes at $\sqrt{\Delta}$, but we prove that the bias may decay arbitrarily slowly.

The rest of this article is organized as follows: We first provide some necessary preliminaries in Section \ref{Sec: Preliminary Notation}. Section \ref{Sec: Identifiability} discusses the general identifiability of $Q$ via realized covariations under perfect spatial observations.
Section \ref{Sec: Sampling setting} provides a discussion on the spatial sampling settings that we consider, while Section \ref{Sec: Asymptotic Theory} contains our asymptotic theory. Section \ref{Sec: Universal Goodness of fit test} describes the goodness-of-fit test for noise-covariance kernels, while we provide finite sample-evidence for our methods in a simulation study in Section \ref{Sec: Simulation Study}.  Formal proofs of our asymptotic results can be found in Section \ref{Sec. Proofs} of the Appendix.

\subsection{Preliminaries and notation}\label{Sec: Preliminary Notation}

Let $(H, \langle \cdot , \cdot \rangle_{H})$ and $(U, \langle \cdot , \cdot \rangle_{U})$ be real separable Hilbert spaces. The space of bounded linear operators from $H$ to $U$ is represented by $\cL(H,U)$, the space of Hilbert-Schmidt operators by $\cL_2(H,U)$ and the space of trace-class operators by $\cL_1(H,U)$. The $\cL(H)$ and $\cL_2(H)$ shorthand notations are used when $U=H$. We write $U \hookrightarrow H$ when the embedding $U \subset H$ is continuous and $U \cong H$ if $H \hookrightarrow U \hookrightarrow H$. 

The space $\cL_2(H,U)$ is itself a separable Hilbert space with inner product given by $\langle \Gamma_1 , \Gamma_2 \rangle_{\cL_2(H,U)} = \sum_{j = 1}^{\infty} \langle \Gamma_1 e_j , \Gamma_2 e_j \rangle_U$ for $\Gamma_1, \Gamma_2 \in \cL_2(H,U)$ and an arbitrary orthonormal basis $(e_j)_{j = 1}^\infty$ of $H$. This space is also an operator ideal.

We identify $\cL_2(H,U)$ and $U \otimes H$ (the Hilbert tensor product). The tensor $u \otimes v$ is viewed as an element of $\cL(H,U)$ by the relation $(u \otimes v) w = \langle v , w \rangle_{H} u$ for $v,w \in H$ and $u \in U$ so that $\| u \otimes v \|_{\cL_2(H,U)} = \| u \|_{U} \| v \|_{H}$. Also, $\langle \Gamma , u \otimes v \rangle_{\cL_2(H,U)} = \langle \Gamma v , u \rangle_{U}$ for $\Gamma \in \cL_2(H,U)$. If $V$ and $G$ are additional real and separable Hilbert spaces, then $\Gamma_1 u \otimes \Gamma_2 v = \Gamma_1 (u \otimes v) \Gamma_2^*$ for $u \in U$, $v \in H$, $\Gamma_1 \in \cL(U,V)$ and $\Gamma_2 \in \cL(H,G)$. The adjoint of $\Gamma_1$ is denoted by $\Gamma_1^*$. Note that $\|\Gamma_1\|_{\cL_2(U,V)} = \|\Gamma_1^*\|_{\cL_2(V,U)}$. 

The class $\cL_1(H,U)$ is a Banach space under the norm $\| \Gamma \|_{\cL_1(H,U)} = \tr(\Gamma^* \Gamma)^{1/2})$ and also an operator ideal. For an additional real and separable Hilbert space $V$ and any $\Gamma_1 \in \cL_2(H,U)$, $\Gamma_2 \in \cL_2(V,H)$ we have $\Gamma_1 \Gamma_2 \in \cL_1(V,U)$ with $\| \Gamma_1 \Gamma_2 \|_{\cL_1(V,U)} \le \| \Gamma_1\|_{\cL_2(H,U)} \| \Gamma_2\|_{\cL_2(V,H)}$. In particular $\| \Gamma_1 \|_{\cL_2(H,U)}^2 = \| \Gamma_1^* \Gamma_1\|_{\cL_1(H)}$.

Given a bounded domain $\cD \subset \R^d$ with Lipschitz boundary, $d \in \{1,2,3\}$, we let $H^r := W^{r,2}(\cD)$, $r \in [0,\infty)$ denote the (fractional) Sobolev space of order $r$. For non-integer $r$, it is equipped with the Sobolev--Slobodeckij norm
\begin{equation}
	\label{eq:sobolev-slobodeckij}
	\|u\|^2_{H^r} = \|u\|_{H^m}^2 + \sum_{|\alpha| = m} \int_{\cD \times \cD} \frac{|D^\alpha u(x) - D^\alpha u(y)|^2}{|x-y|^{d+2\sigma}}\dd x \dd y, \quad u \in H^r,
\end{equation}
$r = m + \sigma$, $m \in \N_0, \sigma \in (0,1)$, where $\|\cdot\|_{H^m}^2$ is the usual integer order Sobolev norm.

This paper employs generic constants $C$. These vary from occurrence to occurrence and are independent of any parameter of interest, such as the number of SPDE observations. We write $a \lesssim b$ for $a,b \in \R$ to signify $a \le C b$ for such a generic constant $C$.

\section{Consistency of Realized Variations under perfect spatial observations}\label{Sec: Identifiability}
This section considers the identifiability of 
$Q$ under perfect spatial observations, providing a foundation for the asymptotic theory in Section \ref{Sec: Asymptotic Theory}.

Let $\cD \subset \R^d, d \in \{1,2,3\}$, be a bounded domain with Lipschitz boundary and let $H = L^2(\cD)$ be the Hilbert space of square-integrable functions on $\cD$. We let $(\Omega, \cA, (\cF_t)_{t \ge 0}, P)$ be a complete probability space equipped with a normal filtration $(\cF_t)_{t \ge 0}$. 

If $-A$ is the generator of a strongly continuous semigroup $(S(t))_{t\geq 0}$ in $L^2(\cD)$ and $X_0 \in L^2(\Omega,H)$ is $\cF_0$-measurable, the SPDE \eqref{eq:spde} has a mild solution $(X_t)_{t\ge 0}$, i.e., $X$ is given by the stochastic Volterra process
\begin{equation}\label{eq:mild}
	X_t=S(t)X_0+\int_0^t S(t-s) dW_s,\quad t \ge 0.
\end{equation}
In general, for such processes $RV_T^{\Delta,I}/T$ is not always a consistent estimator of $Q$ (c.f. Example 4(ii) in \cite{BSV2022}). However, the subsequent 
mild Assumption is sufficient  to guarantee identifiability of  $Q$ via $ RV_T^{\Delta,I}$.  
\begin{assumption}\label{ass: minimal Assumption on A}
	The operator $A$ is closed, densely defined, self-adjoint, positive definite on $H$ and has a compact inverse. Moreover, $Q$ is trace-class and the initial condition satisfies $X_0 \in L^4(\Omega,H)$.
\end{assumption}
Under Assumption \ref{ass: minimal Assumption on A}, the operator $-A$ generates an analytic semigroup $S = (S(t))_{t \ge 0}$, characterized by the spectral representation $S(t) e_j = e^{-\lambda_j t} e_j$ for $j \in \mathbb{N},\ t \ge 0$. In particular, this ensures the existence of a mild solution. Moreover, the analytic structure of the semigroup exerts a regularizing effect on the dynamics of $X$, which is sufficient to establish the following essential identifiability result.
\begin{theorem}\label{thm: Identifiability}
	If Assumption \ref{ass: minimal Assumption on A} holds, we have as $\Delta \downarrow 0$ that
	\begin{equation*}
		\frac{RV_T^{\Delta,I}}T \to Q \quad \text{ in }L^2(\Omega, \cL_2(H)).
	\end{equation*}
\end{theorem}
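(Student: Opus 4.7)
The plan is to use the mild formulation $X_t = S(t)X_0 + \int_0^t S(t-s)\dd W_s$ and decompose each increment as
\[
X_{i\Delta} - X_{(i-1)\Delta} = A_i + M_i,\quad A_i := (S(\Delta)-I)X_{(i-1)\Delta},\ M_i := \int_{(i-1)\Delta}^{i\Delta} S(i\Delta - s)\dd W_s,
\]
where $A_i$ is $\cF_{(i-1)\Delta}$-measurable and the $M_i$ are independent centered Gaussians in $H$ with common covariance $Q_\Delta := \int_0^\Delta S(r)QS(r)^*\dd r$. Expanding the tensor square, $RV_T^{\Delta,I}$ splits into the four sums $\sum_i M_i^{\otimes 2},\ \sum_i A_i^{\otimes 2},\ \sum_i A_i\otimes M_i,\ \sum_i M_i\otimes A_i$, which I would treat separately in $L^2(\Omega, \cL_2(H))$.

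The martingale sum is straightforward: independence of the $M_i^{\otimes 2}$ together with the Gaussian moment bound $\E\|M_i\|_H^4 \le 3(\tr Q_\Delta)^2 \le 3\Delta^2(\tr Q)^2$ yields a variance of order $\Delta/T$ for $T^{-1}\sum_i M_i^{\otimes 2}$, while its mean $(\ulT/T)Q_\Delta = (\ulT\Delta/T)\cdot \Delta^{-1}\int_0^\Delta S(r)QS(r)^*\dd r$ converges to $Q$ in $\cL_2(H)$ by strong continuity of $r \mapsto S(r)QS(r)^*$ at $r = 0$; the latter is verified by expanding in the spectral basis of $A$ and applying dominated convergence using $\|Q\|_{\cL_2(H)}^2 \le (\tr Q)^2 < \infty$ as majorant. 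The cross sums form discrete-time martingales: conditioning on $\cF_{(i-1)\Delta}$ and using independence,
\[
\E\bigl\|T^{-1}\textstyle\sum_i A_i \otimes M_i\bigr\|_{\cL_2(H)}^2 = T^{-2}\tr(Q_\Delta)\sum_i \E\|A_i\|_H^2 = (\tr Q_\Delta/T)\cdot \epsilon_\Delta,
\]
where $\epsilon_\Delta := T^{-1}\sum_i \E\|A_i\|_H^2$. Factoring $(S(\Delta)-I)$ out of the sum, $\epsilon_\Delta = \tr((S(\Delta)-I)\bar C_\Delta (S(\Delta)-I)^*)$ with $\bar C_\Delta := T^{-1}\sum_i C_{(i-1)\Delta}$ and $C_t := \E[X_t^{\otimes 2}] \in \cL_1(H)$. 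Continuity of $t \mapsto C_t$ in $\cL_1(H)$ (which follows from $X_0 \in L^2(\Omega,H)$ and $Q \in \cL_1(H)$) gives $\bar C_\Delta \to T^{-1}\int_0^T C_t \dd t$ in $\cL_1(H)$, and $(S(\Delta)-I)B(S(\Delta)-I)^* \to 0$ in $\cL_1(H)$ for fixed trace-class $B$ by dominated convergence in the spectral basis of $A$; hence $\epsilon_\Delta \to 0$.

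The remaining piece is the drift square $T^{-1}\sum_i A_i^{\otimes 2}$, which is positive, so $\|T^{-1}\sum_i A_i^{\otimes 2}\|_{\cL_2(H)} \le \tr(T^{-1}\sum_i A_i^{\otimes 2}) = T^{-1}\sum_i \|A_i\|_H^2$, whose expectation $\epsilon_\Delta$ vanishes. This immediately yields $L^1(\Omega, \cL_2(H))$-convergence to $0$, and the principal obstacle is promoting it to $L^2(\Omega, \cL_2(H))$-convergence: the naive Cauchy--Schwarz bound $\E[(T^{-1}\sum_i \|A_i\|^2)^2] \le T^{-2}\ulT\sum_i \E\|A_i\|_H^4$ does not vanish without finer control. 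A cleaner closure expands $\|A_i\|_H^2 = \sum_k (1-e^{-\lambda_k \Delta})^2 \langle e_k, X_{(i-1)\Delta}\rangle^2$ in the spectral basis of $A$, exploits the one-dimensional Ornstein--Uhlenbeck structure of each coordinate $\langle e_k, X_t\rangle$ together with the $L^4$ assumption on $X_0$ to bound the fourth moments of $T^{-1}\sum_i \langle e_k, X_{(i-1)\Delta}\rangle^2$, and then closes the $L^2$ bound by dominated convergence on the outer spectral sum with majorant controlled by $\tr Q$ and $\E\|X_0\|_H^4$.
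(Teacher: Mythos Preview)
Your overall decomposition and strategy are sound and do lead to a proof, but there is one concrete slip and one step that is only sketched.

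The slip is in your argument for $\epsilon_\Delta \to 0$. Your $\bar C_\Delta := T^{-1}\sum_{i=1}^{\ulT} C_{(i-1)\Delta}$ does \emph{not} converge to $T^{-1}\int_0^T C_t\,\dd t$ in $\cL_1(H)$; since there are $\ulT \approx T/\Delta$ summands and each $C_t$ has trace bounded away from zero, $\|\bar C_\Delta\|_{\cL_1}$ blows up like $\Delta^{-1}$. What is true is that $\Delta\bar C_\Delta$ converges, but then you would need $\tr((S(\Delta)-I)B(S(\Delta)-I)^*) = o(\Delta)$ for fixed $B$, which fails in general under Assumption~\ref{ass: minimal Assumption on A}. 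Fortunately this does not break your proof: for the cross terms you only need that $(\tr Q_\Delta/T)\epsilon_\Delta \to 0$, and since $\tr Q_\Delta \le \Delta\,\tr Q$ it suffices that $\epsilon_\Delta$ be \emph{bounded}, which follows directly from the spectral expansion
\[
\epsilon_\Delta = T^{-1}\sum_k (1-e^{-\lambda_k\Delta})^2 \sum_i \E|\langle e_k, X_{(i-1)\Delta}\rangle|^2
\]
and the geometric-sum / stationary-variance bounds (yielding $\epsilon_\Delta \lesssim T^{-1}\E\|X_0\|^2 + \tr Q$). So the cross terms are fine; just replace the Riemann-sum reasoning by this direct estimate.

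For the drift square your sketch is correct in spirit but hides the real work. After splitting $X_{(i-1)\Delta}=S((i-1)\Delta)X_0 + Y_{(i-1)\Delta}$, the $X_0$ piece is handled exactly as you indicate: $T^{-1}\sum_i\|(S(\Delta)-I)S((i-1)\Delta)X_0\|^2 \le T^{-1}\sum_k(1-e^{-\lambda_k\Delta})|\langle e_k,X_0\rangle|^2$, and a double dominated-convergence argument (first in $k$ pathwise, then in $\omega$ using $X_0\in L^4$) gives $L^2$ convergence to $0$. For the $Y$ piece, one must actually compute $\E[(T^{-1}\sum_i|\langle e_k,Y_{(i-1)\Delta}\rangle|^2)^2]$ using the Gaussian fourth-moment formula and the OU covariance $\E[\langle e_k,Y_s\rangle\langle e_k,Y_t\rangle] \le e^{-\lambda_k|t-s|}\sigma_k^2/(2\lambda_k)$; this yields the bound $a_k(\E R_k^2)^{1/2} \lesssim (1-e^{-\lambda_k\Delta})\sigma_k^2$, summable with majorant $\sigma_k^2$, so dominated convergence closes the argument. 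The inequality $(1-e^{-x})^2/x \le 1-e^{-x}$ is the small algebraic fact that makes this work.

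Compared with the paper: the paper first reduces to $X_0\equiv 0$ and then routes the variance through the auxiliary estimator $\mathrm{SARCV}^{\Delta,I}_T = \sum_i (\tilde\Delta W_I^i)^{\otimes 2}$ (your $\sum_i M_i^{\otimes 2}$), proving $\|(RV-\E RV)-(\mathrm{SARCV}-\E\,\mathrm{SARCV})\|_{L^2} = o(\sqrt{T\Delta})$ via a careful cross-index analysis of the increments $E^{i,j}_{k,I}$ (Lemma~\ref{lem: RV-SARCV asympt equiv}); the underlying tool is the same $P_N$-splitting / dominated-convergence idea you use. Your route is more self-contained for this theorem; the paper's detour through $\mathrm{SARCV}$ pays off later because the same machinery drives the rate and CLT results.
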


Under additional regularity conditions on $Q$ and $A$, we derive convergence rates and a central limit theorem for the realized covariation, allowing also for discrete spatial sampling.

\section{Sampling setting}\label{Sec: Sampling setting}

This section presents the discrete spatial sampling schemes we examine. Precisely,
for a sequence $(i\Delta)_{i = 0, \ldots, \ulT}$ of temporal observation points 
we consider three spatial observation settings. To each of them we define an \textit{observation operator} $O$ which maps $X$ from $H$ into either $H$ (Setting~\ref{obs:cont} below) or a discrete subspace thereof (Settings~\ref{obs:average} and \ref{obs:pointwise}). 

\begin{enumerate}[label=(\Alph*)]
\item (Continuous sampling) \label{obs:cont} We observe $X_{i\Delta}, i = 1, \ldots, \ulT$ as $\Delta  \to 0$. 
Here, we set $O:=I$, the identity operator on $H$.
\item (Sampling of local averages) \label{obs:average} We observe 
\begin{equation*}
	\frac{1}{|T_h|} \int_{T_h} X_{i\Delta}(x) \dd x, \quad T_h \in \cT_h, h := \max_{T_h} \diam(T_h) \in (0,1], i = 1, \ldots, \ulT,
\end{equation*}
for a non-degenerate family $(\cT_h)_{h \in (0,1]}$ of subdivisions of $\cD$, as $\Delta  \to 0$, $h \to 0$. 
We set $O := P_h$, the orthogonal projection from $H$ to the space of piecewise constant functions on $T_h$. This means we can write
\begin{equation*}
	O X_{i\Delta} = P_h X_{i\Delta} = \sum_{T_h \in \cT_h} \frac{\indicator_{T_h}}{|T_h|} \int_{T_h} X_{i\Delta}(x) \dd x, 
\end{equation*}
where $\indicator_{T_h}(x) = 1$ if $x \in T_h$ and $0$ otherwise.
\item (Pointwise sampling) \label{obs:pointwise} We assume $\cD$ is polygonal and observe 
\begin{equation*}
	X_{i\Delta}(x_j), \quad x_j \in \ver(\cT_h), h := \max_{T_h} \diam(T_h) \in (0,1], i=1,...,\ulT,
\end{equation*}
for a non-degenerate family $(\cT_h)_{h \in (0,1]}$ of triangular subdivisions of $\cD$, as $\Delta  \to 0$, $h \to 0$. 
Here $\ver(\cT_h)$ is the set of vertices (or nodes) of $\cT_h$. We set $O := I_h$, the piecewise linear interpolant which takes a continuous function on $\cD$ to its piecewise linear interpolation on $\cT_h$. This means we can write
\begin{equation*}
	O X_{i\Delta} = I_h X_{i\Delta} = \sum_{x_j \in \ver(\cT_h)} X_{i\Delta}(x_j) \phi^j_{h},
\end{equation*}
where $\phi^j_{h}$ is the nodal basis function associated to $x_j$, i.e., the unique piecewise linear function on $\cT_h$ for which $\phi^j_{h}(x_i) = \delta_{ij}$ for all $x_i \in \ver(\cT_h)$. The random variables $X_{t_i}(x_j)$ and $I_h X_{t_i}$ should be understood in terms of cylindrical Gaussian random variables, and are (under  Assumption~\ref{ass:Ih} below) well-defined by Proposition~\ref{prop:cylindrical-pointwise} and Lemma~\ref{lem:obs-hs}. For technical reasons related to the proof of Lemma~\ref{lem:obs-hs}, we also assume the subdivision is quasiuniform when $d=1$. This is a mild condition that simplifies the analysis but is not essential to the result.
\end{enumerate}
We refer to these observation settings by specifying the value of the observation operator $O$. That is to say, when we write $O = I$, we refer to the observation setting~\ref{obs:cont} and so on.

Some remarks are in order.	

\begin{remark}[On the assumptions on the subdivisions]
By a subdivision $\cT_h$, we refer to a finite collection $\{T_h^j\}$ of closed subsets of $\R^d$ with nonempty interiors such that $\inter(T_h^i) \cap \inter(T_h^j) = \emptyset$ for $i \neq j$ and $\bar{\cD} = \cup_{j} T_h^j$. It is called triangular if each $T_h \in \cT_h$ is an interval, triangle or tetrahedron, depending on the dimension of the underlying euclidean space. Note that any set $\{x_j\}$ of points in $\bar{\cD}$ gives rise to a triangular subdivision as long as $\{x_j\}$ contains all vertices of $\partial \cD$ and there is at least one vertex in $\cD$. We call $(\cT_h)_{h \in (0,1]}$ non-degenerate if there exists a $C > 0$ such that for all $T_h \in \cT_h$ and all $h \in (0,1]$, $\diam(B_{T_h}) \ge C \diam(T_h)$, where $B_{T_h}$ is the largest ball in $T_h$ such that $T_h$ is star-shaped  with respect to $B_{T_h}$ (that is, the closed convex hull of $\{x\} \cup B_{T_h}$ is a subset of $T_h$). We call $(\cT_h)_{h \in (0,1]}$ quasiuniform if the ratio of the largest to the smallest element diameter is uniformly bounded across all $h \in (0,1]$. Uniform meshes of points on a cube $\cD = [0,1]^d$ with decreasing mesh size $h$ is a particular instance of such a family.  
\end{remark}

\begin{remark}[Occurrence of the different sampling schemes] The three considered sampling schemes are important in different theoretical contexts and applications:
\begin{itemize}
	\item[(A)] The continuous spatial sampling scheme (A) is not met in practice where observations in both time and space are discrete. However, it is still practially important when we have dense data in space that are sampled with noise. In this case we might approximate the data $X_{i\Delta},i=1,...,\ulT$ in a way in which the discretization error in space gets negligible and the asymptotics for the continuous observation scheme apply.
	\item[(B)] The sampling scheme of local averages occurs in various applications,
	for instance, when temperature is measured as an average temperature within a demarcated area or in the bond market, where log-bond prices are local averages of instantaneous forward rates, which are in econometric contexts sometimes modelled as an equation of type \eqref{eq:spde} (c.f. \cite{Cont2005}).
	\item[(C)] The pointwise sampling scheme (C) is commonly considered in the literature on statistics for SPDEs (c.f. e.g. \cite{Bibinger2019} or \cite{Chong2020}). It corresponds to sampling at particular points in space.
\end{itemize}
\end{remark}
\begin{remark}[On the kernel form of the realized covariation]
It is instructive to consider the kernel form of the estimator $\mathrm{RV}_T^{\Delta,O}/T$ for $q$ in the discrete observation settings. Let us write $\Delta_i X(x_j) := X_{i\Delta}(x_j) - X_{(i-1)\Delta}(x_j)$ for the observed increment at a spatial location $x_j$, and $\overline{\Delta_i X}_{T_h} := |T_h|^{-1}\int_{T_h} (X_{i\Delta}(z)-X_{(i-1)\Delta}(z))\dd z$ for the increment averaged over a set $T_h \in \cT_h$. In the case of pointwise sampling ($O=I_h$), the estimator for $q(x,y)$ has a piecewise bilinear kernel constructed by interpolating the sample covariance matrix of the pointwise increments:
\begin{equation} 
	\label{eq:pointwise-RV-kernel-form}
	(x,y) \mapsto \frac{1}{T}\sum_{x_j, x_k \in \ver(\cT_h)} \left( \sum_{i=1}^{\ulT} \Delta_i X(x_j) \Delta_i X(x_k) \right) \phi_h^j(x) \phi_h^k(y). \end{equation}
When sampling local averages ($O=P_h$), the estimator for $q(x,y)$ has instead a piecewise constant kernel on the grid $\cT_h \times \cT_h$, given by
\begin{equation} 
	\label{eq:spatav-RV-kernel-form}
	(x,y) \mapsto \frac{1}{T}\sum_{T_h^k, T_h^j \in \cT_h} \left( \sum_{i=1}^{\ulT} \overline{\Delta_i X}_{T_h^k} \overline{\Delta_i X}_{T_h^j} \right) \indicator_{T_h^k}(x) \indicator_{T_h^j}(y). 
\end{equation}
In both settings, the estimator translates the empirical covariance structure from the observed discrete data into a functional form.
\end{remark}

In the perfect observation setting, no spatial discretization error has to be accounted for. This aspect becomes relevant in settings (B) and (C).
We therefore now state two important lemmata on the spatial approximation adequacy under the different observations schemes (B) and (C). The proofs can be found in Section \ref{Sec: Proofs of Section 3}. The first lemma is a small extension of a well-known theorem of polynomial approximation theory.
\begin{lemma}
\label{lem:obs-error}
For a constant $C < \infty$ that is independent of $h \in (0,1]$,
\begin{equation*}
	\| O - I \|_{\cL(H^r,H)} \le 
	\begin{cases}
		C h^{\min(r,1)} \text{ if } r \ge 0 \text{ and } O = P_h, \\
		C h^{\min(r,2)} \text{ if } r > d/2 \text{ and } O = I_h.
	\end{cases}
\end{equation*}
\end{lemma}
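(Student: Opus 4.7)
The plan is to handle the two cases of $O$ separately, reducing each to classical elementwise polynomial-approximation estimates (together with a global summation and, for intermediate regularities, real interpolation between Sobolev spaces).

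For $O = P_h$: At $r = 0$, the orthogonality of $P_h$ gives $\|P_h - I\|_{\mathcal{L}(H)} \le 1$, which already provides the bound $h^0 = h^{\min(r,1)}$ whenever $r \in [0,1)$ and $h \in (0,1]$. At $r = 1$, a Poincaré inequality on each element $T_h$ (with constant uniform in $h$ thanks to non-degeneracy) yields
\begin{equation*}
\|u - P_h u\|_{L^2(T_h)} \le C\,\diam(T_h)\,\|\nabla u\|_{L^2(T_h)},
\end{equation*}
which, after squaring and summing over elements, gives $\|u - P_h u\|_H \le Ch\|u\|_{H^1}$. For $r \in (0,1)$, real interpolation between the $r=0$ and $r=1$ endpoints (using $H^r = [H, H^1]_{r,2}$) yields the intermediate rate $h^r$. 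For $r \ge 1$, one saturates via the continuous embedding $H^r \hookrightarrow H^1$.

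For $O = I_h$: The condition $r > d/2$ combined with $d \le 3$ ensures $H^r \hookrightarrow C(\bar{\cD})$, so the interpolant is well-defined and its $L^2$-norm is controlled by $\|u\|_{H^r}$. At $r = 2$, the classical Bramble--Hilbert lemma applied on each element (via the reference-element scaling argument, where non-degeneracy is crucial) produces
\begin{equation*}
\|u - I_h u\|_{L^2(T_h)} \le C\,\diam(T_h)^2\,|u|_{H^2(T_h)},
\end{equation*}
which sums to $\|u - I_h u\|_H \le Ch^2\|u\|_{H^2}$. For $r \in (d/2, 2)$, I would invoke the fractional analogue $\|u - I_h u\|_{L^2(T_h)} \le C\,\diam(T_h)^r\,|u|_{H^r(T_h)}$, and use that the sum of local Gagliardo seminorms is bounded by the global seminorm on $\cD$ (since the $T_h^j \times T_h^j$ are essentially disjoint subsets of $\cD \times \cD$), giving $\|u - I_h u\|_H \le Ch^r \|u\|_{H^r}$. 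For $r \ge 2$, saturate via $H^r \hookrightarrow H^2$.

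The main obstacle is verifying that the constants in the elementwise (fractional) Bramble--Hilbert estimates are uniform in $h$. This is standard in the integer-order case but requires some care for fractional $r$: one tracks how the Sobolev--Slobodeckij seminorm \eqref{eq:sobolev-slobodeckij} transforms under affine mappings to a reference element, using the non-degeneracy (shape-regularity) of $(\cT_h)$ to bound the Jacobians and their inverses uniformly. Once this is done, the two bounds follow by combining the elementwise estimates with the interpolation/saturation scheme above.
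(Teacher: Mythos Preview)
Your approach is correct and essentially matches the paper's, which cites the same standard tools (Poincar\'e/Friedrich for $P_h$, the Dupont--Scott fractional Bramble--Hilbert theory for $I_h$) without spelling out the endpoint-plus-interpolation scheme you outline. One minor slip: the sentence claiming that $\|P_h - I\|_{\mathcal{L}(H)} \le 1$ already yields $h^{\min(r,1)}$ for all $r \in [0,1)$ is only correct at $r=0$, but since you then cover $r \in (0,1)$ separately by interpolation this does not affect the argument.
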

Since we consider cylindrical Gaussian random variables, we also need to establish a Hilbert--Schmidt bound on $I_h$ that is uniform in $h$.
\begin{lemma}
\label{lem:obs-hs}
For all $r > d/2$, there exists an $\epsilon > 0$ and a constant $C < \infty$ that is independent of $h \in (0,1]$, such that $\| I_h \|_{\cL_2(H^r,H^\epsilon)} \le C$.
\end{lemma}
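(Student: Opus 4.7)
The plan is to express $\|I_h\|^2_{\cL_2(H^r, H^\epsilon)}$ explicitly via the reproducing kernel Hilbert space structure of $H^r$, which is available since $r > d/2$ implies that point evaluation is bounded on $H^r$ by Sobolev embedding. Letting $K(\cdot, x_j) \in H^r$ denote the Riesz representer of $\delta_{x_j}$, one writes $I_h u = \sum_j \langle u, K(\cdot, x_j)\rangle_{H^r}\phi_h^j$, and expanding the Hilbert--Schmidt norm in an arbitrary orthonormal basis of $H^r$ (together with Parseval) yields the identity
\begin{equation*}
\|I_h\|^2_{\cL_2(H^r, H^\epsilon)} = \sum_{j,l} \langle \phi_h^j, \phi_h^l\rangle_{H^\epsilon}\, K(x_j, x_l).
\end{equation*}

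Next, I would introduce the piecewise bilinear nodal interpolant $\Gamma_h(x,y) := \sum_{j,l} K(x_j, x_l)\phi_h^j(x)\phi_h^l(y)$ of $K$ on $\cT_h \times \cT_h$. Splitting the $H^\epsilon$ inner product into its $L^2$ part and its Sobolev--Slobodeckij part \eqref{eq:sobolev-slobodeckij}, and using symmetry of $K$, gives
\begin{equation*}
\|I_h\|^2_{\cL_2(H^r, H^\epsilon)} = \int_\cD \Gamma_h(x,x)\dd x + \int_{\cD^2}\frac{\Gamma_h(x,x) - 2\Gamma_h(x,y) + \Gamma_h(y,y)}{|x-y|^{d+2\epsilon}}\dd x\dd y.
\end{equation*}
The first term is uniformly bounded since nodal interpolation is $L^\infty$-stable and $\|K\|_{L^\infty(\bar\cD \times \bar\cD)}$ is finite. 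For the integrand in the second term, the reproducing property yields the duality formula $\Gamma_h(x,x) - 2\Gamma_h(x,y) + \Gamma_h(y,y) = \sup_{\|u\|_{H^r}\le 1} |I_h u(x) - I_h u(y)|^2$, which follows from the identity $I_h u(x) - I_h u(y) = \langle u, \sum_j (\phi_h^j(x) - \phi_h^j(y)) K(\cdot, x_j)\rangle_{H^r}$ combined with Riesz representation.

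The crux is then the H\"older stability estimate $|I_h u(x) - I_h u(y)| \le C\|u\|_{H^r}|x-y|^\alpha$ uniformly in $h$, valid for any $\alpha \in (0, r - d/2)$. This combines the Sobolev embedding $H^r \hookrightarrow C^\alpha(\bar\cD)$ with the fact that piecewise linear nodal interpolation preserves H\"older regularity with constants independent of $h$ on non-degenerate (quasiuniform when $d=1$) meshes. Choosing $\epsilon \in (0, \alpha)$ then renders $\int_{\cD^2}|x-y|^{2\alpha - d - 2\epsilon}\dd x\dd y$ finite, yielding the desired uniform bound on $\|I_h\|_{\cL_2(H^r,H^\epsilon)}$.

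The main technical obstacle is precisely this H\"older stability of $I_h$, which I plan to establish by combining the standard interpolation error bound $\|I_h u - u\|_{L^\infty}\lesssim h^\alpha [u]_{C^\alpha}$ with the H\"older regularity of $u \in H^r$, splitting cases according to whether $|x-y|$ is comparable to the mesh size $h$. On a single cell the bound follows from linearity and H\"older control on nodal values, while across cells it follows from triangle-inequality chaining through intermediate nodes. The non-degeneracy of $\cT_h$ (and the additional quasiuniformity in $d=1$) is essential for the interpolation error estimate to hold with constants independent of $h$.
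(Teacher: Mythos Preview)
Your approach is correct and takes a genuinely different route from the paper. Both arguments open with the same reproducing-kernel identity for $\|I_h\|^2_{\cL_2(H^s,H)}$, bounding the $L^2$-part via the partition of unity $\sum_j \phi_h^j \equiv 1$ and the boundedness of the kernel. From there the paper proceeds abstractly: it combines the uniform bound $\|I_h\|_{\cL_2(H^s,H)}<\infty$ with a cited uniform operator bound $\|I_h\|_{\cL(H^s,H^{r_0})}<\infty$ (from \cite{LordPetersson25}) by Schatten-class interpolation \cite{Gapaillard1974}, obtaining $I_h\in\cL_{2/\theta}(H^s,H^{r_0(1-\theta)})$, and then composes with the Schatten-class Sobolev embedding $H^r\hookrightarrow H^s$ (for $r-s>d(1-\theta)/2$) via the H\"older inequality for Schatten norms to land in $\cL_2(H^r,H^\epsilon)$ with $\epsilon=r_0(1-\theta)$. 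Your argument is instead fully hands-on: you expand the Slobodeckij seminorm explicitly, recognize the integrand as the squared norm of the functional $u\mapsto I_hu(x)-I_hu(y)$ on $H^r$, and reduce everything to the uniform H\"older stability $[I_hu]_{C^\alpha}\lesssim[u]_{C^\alpha}$. This buys you a self-contained proof with no operator-interpolation machinery or external references, at the cost of having to carry out the H\"older stability by case analysis---the point that adjacent simplices in a shape-regular mesh have comparable diameters (so that a short segment meets only $O(1)$ elements of comparable size) is exactly where non-degeneracy enters for $d\ge 2$, and why quasiuniformity is needed when $d=1$. The paper's route is more modular; yours is more elementary and makes the geometric mechanism transparent.
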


With observation schemes established, we next consider the asymptotic theory.

\section{Asymptotic Theory}\label{Sec: Asymptotic Theory}
In this section, we establish convergence rates, central limit theorems, and the required regularity conditions. We begin with a discussion of the latter.

\subsection{Regularity Conditions}\label{Sec: Parabolic SPDEs on bounded domains}
To derive rates of convergence and a characterization of the asymptotic distribution of the realized covariation \eqref{eq:realized-variation} under the different sampling schemes presented in Section \ref{Sec: Sampling setting}, we impose additional regularity assumptions, which are the subject of this subsection.

To formulate these conditions, we also need the notion of fractional powers of densely defined and positive definite operators $A$ with compact inverse.
By the spectral theorem we obtain for such operators a sequence $(\lambda_j)_{j=1}^\infty$ of positive non-decreasing eigenvalues of $A$ with $\lim_{j \to \infty} \lambda_j = \infty$, along with an orthonormal eigenbasis $(e_j)_{j=1}^\infty$ in $H$. Negative fractional powers of $A$ are bounded, positive fractional powers are defined by
\begin{equation*}
	A^{\frac{r}{2}} v := \sum_{j = 1}^{\infty} \lambda_j^{\frac{r}{2}} \langle v , e_j\rangle e_j, \quad v \in \dot{H}^r := \dom(A^{\frac{r}{2}}) = \Big\{ v \in H : \sum^\infty_{j=1} \lambda_j^r |\langle v, e_j\rangle|^2 < \infty \Big\}.
\end{equation*}
The space $\dot{H}^r$ is a Hilbert space with respect to the graph inner product $\langle \cdot, \cdot \rangle_{\dot{H}^r} = \langle A^{r/2} \cdot, A^{r/2} \cdot \rangle$.
For $r<0$, we define $\dot{H}^r$ to be the completion of $H$ under the same graph norm. Then, the embedding $\dot{H}^r \hookrightarrow \dot{H}^s$, $r > s$ is dense and compact, and $A^{r/2}$ extends to an isometric isomorphism $A^{r/2}  \colon \dot{H}^{s} \to \dot{H}^{s-r}$, $r,s \in \R$.

We are now in the position to state the the regularity assumptions which we  impose to derive rates of convergence and a central limit theorem for the realized covariation estimator. 

We impose a sequence of increasingly stronger assumptions, that apply for $O = I$, $O = P_h$ and $O = I_h$, respectively. 
\begin{assumption}\label{ass:rate} Assumption \ref{ass: minimal Assumption on A} is fulfilled. Additionally, for some $\gamma, \iota \ge 0$, 
	\begin{enumerate}[label=(\Alph*)]
		\item \label{ass:rate:q} $\| A^{\gamma/2} Q \|_{\cL_2(H)} < \infty$ and 
		\item \label{ass:rate:initial} $\| A^{\iota/2} X_0 \|_{L^4(\Omega,H)}  < \infty$.
	\end{enumerate}
\end{assumption}
In order to make use of Lemma~\ref{lem:obs-error} for $O=P_h$, a Sobolev regularity condition on $A$ is required, namely $\dot{H}^r\hookrightarrow H^r$ for $r \in [0,1]$, which is why we impose the following.
\begin{assumption}\label{ass:Ph} Assumption \ref{ass:rate} is fulfilled. Additionally, $\dot{H}^1 \hookrightarrow H^1$.
\end{assumption}
For $O=I_h$, we also impose a corresponding, stronger, assumption that yield higher spatial convergence rates. We also need higher regularity to allow for pointwise observations, but due to Lemma~\ref{lem:obs-hs}, we may impose this in operator norm, rather than the stronger Hilbert--Schmidt norm.
\begin{assumption}\label{ass:Ih} Assumption \ref{ass:rate} is fulfilled. Additionally, for some $\beta \ge 0$, $\eta, \zeta > \max(d/2-1,0)$,
	\begin{enumerate}[label=(\Alph*)]
		\item \label{ass:Ih:q:1} $\| A^{(\beta +d-1+\epsilon)/4} Q^{\frac 12} \|_{\cL(H)} < \infty$ for some $\epsilon > 0$, 
		\item \label{ass:Ih:q:2} $\| A^{\zeta/2} Q A^{\eta/2} \|_{\cL(H)} < \infty$, and
		\item \label{ass:Ih:ell-reg} $\dot{H}^2 \hookrightarrow H^2$.
	\end{enumerate}
\end{assumption}
We discuss these assumptions in the remainder of this subsection. We first consider the regularity conditions on the elliptic operator $A$ and then the noise covariance $Q$.

\subsubsection{Regularity conditions on the operator $A$}
\label{Sec: elliptic regularity discussion}

The conditions $\dot{H}^1 \hookrightarrow H^1$ from Assumption~\ref{ass:Ph} and the stronger $\dot{H}^2 \hookrightarrow H^2$ from Assumption~\ref{ass:Ih} can be seen as Sobolev regularity conditions on $A$. The latter is often called the shift property or elliptic regularity. We first describe the canonical example of $A$ as a symmetric elliptic differential operator, considering two different boundary conditions.

\begin{example}
	\label{ex:elliptic}
	Let $\mathcal{D} \subset \mathbb{R}^d$ be a convex polygon (or polyhedron). Let $a_{i,j} \in C^1(\bar{\mathcal{D}})$ with $a_{i,j} = a_{j,i}$, and suppose there exists a constant $\lambda_0 > 0$ such that for all $y \in \mathbb{R}^d$ and $x \in \mathcal{D}$,
	\begin{equation*}
		\sum_{i,j=1}^d a_{i,j}(x) y_i y_j \ge \lambda_0 |y|^2.
	\end{equation*}
	Let $c \in L^\infty(\mathcal{D})$ be a non-negative function. In the case of Neumann boundary conditions, assume further that $c \ge c_0 > 0$ on $\mathcal{D}$.
	Define the operator $A$ by
	\begin{equation}
		\label{eq:elliptic-def}
		(Au)(x) = -\sum_{i,j=1}^d D^i\big(a_{i,j}(x) D^j u(x)\big) + c(x) u(x), \quad x \in \mathcal{D},
	\end{equation}
	where $D^j u$ denotes the weak derivative of $u$ with respect to $x_j$. Then, $A$ is an unbounded operator whose domain is
	\begin{equation*}
		\dot{H}^2 = \{ u \in H^2(\mathcal{D}) \cap H^1_0(\mathcal{D}) \}
	\end{equation*}
	when it is equipped with zero Dirichlet boundary conditions, and
	\begin{equation*}
		\dot{H}^2 = \left\{ u \in H^2(\mathcal{D}) : \sum_{j=1}^d a_{i,j}(x) D^j u(x) n_i(x) = 0 \text{ on } \partial\mathcal{D} \right\}
	\end{equation*}
	when it is equipped with zero Neumann boundary conditions. Here $n = (n_1, \dots, n_d)$ is the outward unit normal vector on $\partial\mathcal{D}$.
\end{example}
The operator $A$ in this example is a densely defined, closed, and positive definite operator on the Hilbert space \( H = L^2(\mathcal{D}) \). It possesses a compact inverse and clearly $\dot{H}^2 \hookrightarrow H^2$. The operator may also be defined through a bilinear form $a(\cdot, \cdot)$ on a subset of $H^1 \times H^1$. The regularity of $a_{i,j}$ can then be relaxed and $\mathcal{D}$ need not be convex. This construction directly yields the equivalence $\dot{H}^1 \cong H^1$. However, with the $C^1$ coefficients and convex polygonal domain of Example~\ref{ex:elliptic}, we also have the equivalence $\dot{H}^r \cong H^r$ for $r \in [0, \theta)$, where
\begin{equation}
	\label{eq:sobolev_id_theta_def}
	\theta = \begin{cases}
		\frac{1}{2} & \text{for Dirichlet boundary conditions}, \\
		\frac{3}{2} & \text{for Neumann boundary conditions}.
	\end{cases}
\end{equation}
This allows us to translate the abstract regularity conditions on $Q$ into more concrete and verifiable smoothness conditions on its kernel $q$. We refer to \cite{Yagi2010}, in particular to \cite[Theorems~16.9, 16.13]{Yagi2010}, for the full details.

Under further restrictions on the coefficients and the domain, it is possible to include first-order advection terms in the operator $A$ by transforming it into a self-adjoint operator on a weighted Hilbert space, similar to the setting of \cite{Bibinger2020}.
\begin{example}
	\label{ex:advection-diffusion}
	Let $\mathcal{D}$ be as in Example~\ref{ex:elliptic} and let
	\[ (Au)(x) = -a(x) \Delta_\cD u(x) + \mathbf{b}(x) \cdot \nabla u(x) + c(x) u(x), \]
	with $\Delta_\cD$ and $\nabla$ the Laplace operator and divergence operator and
	$a \in C^2(\bar{\mathcal{D}})$ bounded from below by some $\lambda_0 > 0$, $\mathbf{b} \in (C^1(\bar{\mathcal{D}}))^d$, and with $c$ fulfilling the same conditions as in Example~\ref{ex:elliptic}. We further assume that
	\[ \mathbf{V}(x) = \frac{\mathbf{b}(x) + \nabla a(x)}{a(x)} \]
	satisfies
	\( \partial V_j / \partial x_k = \partial V_k / \partial x_j\) for all \(j,k = 1, \dots, d\). This is for instance true if $d=1$ or if $a$ is constant and $\mathbf{b}$ a conservative vector field.
	This condition ensures that $\nabla \phi = -\mathbf{V}$, where $\phi$ is for an arbitrary fixed point $x_0 \in \mathcal{D}$ given by
	\[ \phi(x) = -\int_0^1 \mathbf{V}(x_0 + t(x - x_0)) \cdot (x-x_0) \, dt. \]
	Now, we define the weight function $w(x) = \exp(\phi(x))$ and introduce the weighted space $H_w = L^2(\mathcal{D}, w(x)\dd x)$ with inner product $\langle u, v \rangle_w = \int_{\mathcal{D}} u(x)v(x) w(x) \dd x$. Using $\nabla w = w \nabla\phi = -w\mathbf{V}$, a direct calculation shows that for smooth $u$,
	\[ -\nabla \cdot (w(x)a(x) \nabla u(x)) + w(x)c(x)u(x) = w(x) (Au)(x). \]
	From this, it follows that $A$ is a symmetric operator in $H_w$. 
\end{example}
From the fact that the weighted operator $w A$ is of the same form as Example~\ref{ex:elliptic}, it follows that when we equip $A$ with the boundary conditions of this example, it is densely defined, closed, and positive definite on $H_w$ with a compact inverse. Its domain is again given by either of the spaces of Example~\ref{ex:elliptic}, depending on the boundary condition. The weight function $w(x)=\exp(\phi(x))$ is continuous and strictly positive, so that the weighted and unweighted spaces of Examples \ref{ex:elliptic} and \ref{ex:advection-diffusion} fulfill $H_w \cong H$. Since $\dot{H}^r$ is an interpolation space (see \cite[Chapter~1]{LionsMagenes1972}), not only does Assumption~\ref{ass:Ih}\ref{ass:Ih:ell-reg}) hold, but the identification of \eqref{eq:sobolev_id_theta_def} remains valid for the same range of $r$ as the operator of Example~\ref{ex:elliptic}. The operator thus fits into our framework, provided we work in $H_w$ instead of $H$. 

Let \(M_w\) be the bounded, invertible operator on \(H\) corresponding to multiplication by the weight function \(w(x)\). The inner products are related by \(\langle u, v \rangle_w = \langle u, M_w v \rangle_H\). Consequently, the covariance operator of the noise process in the space \(H_w\) (denoted $Q_w$) is related to the original covariance \(Q\) in \(H\) by the identity \(Q_w = Q M_w\). The realized covariation estimator itself must also be defined with respect to the appropriate inner product. In the weighted space, it takes the form
\[
\mathrm{RV}_T^w := \sum_{i=1}^{\ulT} (O X_{i\Delta}-O X_{(i-1)\Delta})^{\otimes_w 2},
\]
where \((u \otimes_w v)z = \langle z,v \rangle_w u\). As a direct consequence of the definitions, we have
\[
\| \mathrm{RV}_T^{\Delta,O}/T - Q \|_{\mathcal{L}_2(H)} = \| M^{-1/2}_w (\mathrm{RV}_T^w/T - Q_w) M^{-1/2}_w\|_{\mathcal{L}_2(H_w)}.
\]
Due to equivalence of $\|\cdot \|_H$ and $\|\cdot \|_{H_w}$, our convergence results in $H = L^2(\cD)$ below remain valid for the non-self-adjoint operator of this example. The equivalence of domains for fractional powers of weighted an unweighted operators $A$ ensures that the regularity conditions on \(Q_w\) have the same interpretation in terms of function smoothness as those on \(Q\).

\subsubsection{Regularity conditions on the noise covariance}\label{Sec: Discussion of regularity conditions of Q}
We now discuss the regularity conditions on $Q$ from Assumptions~\ref{ass:rate} and~\ref{ass:Ih}. The subsequent examples are formulated on the level of the equivalent covariance kernel $q$ rather than the operator $Q$. For this, recall that since $Q$ is trace-class, it can be written as a kernel operator
\begin{equation*}
	\cD \ni x \mapsto Q f(x)= \int_{\cD} q(x,y) f(y)dy,
\end{equation*}
for a kernel $q\in L^2(\cD \times \cD, \mathbb R)$. While our assumptions provide a general framework, their conditions may appear abstract. The examples below illustrate how the conditions on $Q$ imposed by Assumptions \ref{ass:rate} and \ref{ass:Ih} can be verified by connecting them to more familiar regularity properties of the kernel $q$. For the full details of these techniques and references to the literature, we refer to \cite{KovacsLangPetersson2023}.

Throughout this discussion, we consider the operator $A$ as defined in Example \ref{ex:elliptic}, where the operator-induced spaces $\dot{H}^r$ are equivalent to the classical fractional Sobolev spaces $H^r$ for a range of $r$ determined by the boundary conditions (cf.\ \eqref{eq:sobolev_id_theta_def}). This equivalence allows us to translate the operator-theoretic conditions into conditions on the smoothness of the kernel $q$.

\begin{example}[Hölder continuous kernels]
	To verify a Hilbert--Schmidt condition such as $\| A^{\gamma/2} Q \|_{\cL_2(H)} < \infty$ of Assumption \ref{ass:rate} directly, it is natural to consider Hölder continuity of the kernel $q$. Due to the equivalence $\dot{H}^\gamma \cong H^\gamma$, this condition is for sufficiently small $\gamma$ equivalent to $\|Q\|_{\cL_2(H,H^\gamma)} < \infty$. To see how Hölder continuity implies this, we can examine the squared norm
	\begin{align*}
		\|Q\|_{\cL_2(H,H^\gamma)}^2 &= \sum_{j=1}^\infty \|Q e_j\|_{H^\gamma}^2,
	\end{align*}
	where $(e_j)_{j=1}^\infty$ is an orthonormal basis of $H$. For $\gamma  < 1$, the Sobolev--Slobodeckij norm \eqref{eq:sobolev-slobodeckij} gives
	\begin{align*}
		\sum_{j=1}^\infty \|Q e_j\|_{H^\gamma}^2 &= \sum_{j=1}^\infty \left( \|Qe_j\|_H^2 + \int_{\cD \times \cD} \frac{|Qe_j(x) - Qe_j(y)|^2}{|x-y|^{d+2\gamma}} \dd x \dd y \right) \\
		&= \|Q\|_{\cL_2(H)}^2 + \int_{\cD \times \cD} \frac{\sum_{j=1}^\infty |\langle q(x,\cdot)-q(y,\cdot), e_j \rangle|^2}{|x-y|^{d+2\gamma}} \dd x \dd y \\
		&= \|Q\|_{\cL_2(H)}^2 + \int_{\cD \times \cD} \frac{\|q(x,\cdot)-q(y,\cdot)\|_H^2}{|x-y|^{d+2\gamma}} \dd x \dd y.
	\end{align*}
	If we assume that for a.e. $x \in \cD$, the function $q(x, \cdot)$ is $\sigma$-Hölder continuous with a uniformly bounded Hölder constant, then the numerator is bounded by $C^2|x-y|^{2\sigma}$, and the integral converges if $\sigma > \gamma$. Higher-order regularity of $q$ (i.e., differentiability and satisfying boundary conditions) yields analogous bounds for larger $\gamma$. 
\end{example}

Assumption~\ref{ass:Ih} includes conditions in the operator as well as (through Assumption~\ref{ass:rate}) the Hilbert–Schmidt norm. While a Hilbert–Schmidt property implies the corresponding operator norm bound, the reverse is not generally true. However, if $Q$ maps $H$ to a sufficiently smooth Sobolev space $H^s$, the embedding from $H^s$ into a lower-order space $H^r$ is Hilbert–Schmidt if $s-r > d/2$. This connection is key for analyzing the next class of kernels.

\begin{example}[Stationary kernels]
	\label{ex:stationary-kernels}
	For a stationary kernel $q(x,y) = q(x-y)$, its regularity properties are naturally described by the decay of its Fourier transform $\hat{q}$. The connection to the regularity of $Q$ is established via Sobolev spaces, whose norms for functions on $\R^d$ are conveniently defined in Fourier space. The space $H^s(\R^d)$ for $s \in \R$ consists of tempered distributions $u$ for which the norm
	\begin{equation*}
		\|u\|_{H^s(\R^d)}^2 = \frac{1}{(2\pi)^{d/2}} \int_{\R^d} (1+|\xi|^2)^s |\hat{u}(\xi)|^2 \dd\xi < \infty.
	\end{equation*}
	Since $\cD$ has a Lipschitz boundary, the Sobolev space $H^s$ can be defined via restrictions of functions in $H^s(\R^d)$, with its norm equivalent to the Sobolev--Slobodeckij norm \eqref{eq:sobolev-slobodeckij} for $s \ge 0$.
	
	A decay rate of $\hat{q}(\xi) \lesssim (1+|\xi|^2)^{-\sigma}$ for some $\sigma>d/2$ implies the operator property $Q \in \cL(H,H^{2\sigma})$, which is proven by extending functions from $\cD$ to $\R^d$. For conditions involving the square root, $Q^{1/2}$, the key fact is that $Q^{1/2}(H)$ is isometrically isomorphic to the reproducing kernel Hilbert space $H_q$. The same decay condition on $\hat{q}$ ensures the continuous embedding $H_q \hookrightarrow H^\sigma$ (see, e.g., \cite[Corollary 10.48]{Wendland04}). This gives the mapping property $Q^{1/2} \in \cL(H, H^\sigma)$. This method, combined with Sobolev embeddings, can be used to verify the conditions on $Q$ and $Q^{1/2}$ in our assumptions.
	
	The Matérn class of kernels is a canonical example. A Matérn kernel with paramaters $\sigma,\nu, \rho >0$ is specified by the formula
	\begin{equation}\label{eq: Matern kernel}
		q(x,y)=\sigma^2 \frac {2^{1-\nu}}{\Gamma(\nu)}\left(\sqrt{2\nu}\frac  {|x-y|}{\rho}\right)^{\nu}K_{\nu}\left(\sqrt{2\nu}\frac {|x-y|}{\rho}\right),\quad x,y \in \cD
	\end{equation}
	where $\Gamma$ is the gamma function and $\mathcal K_{\nu}$ is a modified Bessel function of the second kind.
	Its spectral density satisfies $\hat{q}(\xi) \eqsim (1+|\xi|^2)^{-(\nu+d/2)}$for some $\nu > 0$, which corresponds to a Sobolev regularity of $\sigma = \nu+d/2$. Under Neumann boundary conditions ($\theta=3/2$), one can show that Assumption~\ref{ass:rate}\ref{ass:rate:q} holds for all $\gamma < \min(3/2, 2\nu+d/2)$, while Assumption~\ref{ass:Ih}\ref{ass:Ih:q:1} holds for $\beta < 2\min(\nu+1/2, 2-d/2)$ and~\ref{ass:Ih}\ref{ass:Ih:q:2} for any $\zeta, \eta < 3/2$ such that $\zeta+\eta < 2\nu+d$.
\end{example}

\begin{example}[Kernels commuting with $A$]
	\label{ex:kernels-commuting-with-A}
	A commonly considered scenario is when $A$ and $Q$ share the same orthonormal basis of eigenfunctions $(e_k)_{k=1}^\infty$. These are then related to the kernel via the expansion
	\begin{equation*}
		q(x,y) = \sum_{k=1}^{\infty} \mu_k e_k(x) e_k(y),
	\end{equation*}
	where the series converges uniformly. The conditions in Assumption~\ref{ass:rate} and Assumption~\ref{ass:Ih} can then be expressed directly in terms of the eigenvalue sequence $(\mu_k)_{k=1}^\infty$ of $Q$ and $(\lambda_k)_{k=1}^\infty$ of $A$.
	
	The Hilbert--Schmidt condition in  Assumption~\ref{ass:rate} becomes a summability condition
	\begin{equation*}
		\| A^{\gamma/2} Q \|_{\cL_2(H)}^2 = \sum_{k=1}^\infty \| A^{\gamma/2} Q e_k \|^2 = \sum_{k=1}^\infty \| \mu_k \lambda_k^{\gamma/2} e_k \|^2 = \sum_{k=1}^\infty \lambda_k^\gamma \mu_k^2 < \infty,
	\end{equation*}
	while the operator norm conditions in  Assumption~\ref{ass:Ih} become supremum conditions
	\begin{align*}
		\| A^{(\beta+d+\epsilon-1)/4} Q^{1/2} \|_{\cL(H)} & = \sup_k \lambda_k^{(\beta+d+\epsilon-1)/4} \mu_k^{1/2} < \infty, \\
		\| A^{\zeta/2} Q A^{\eta/2} \|_{\cL(H)} &= \sup_k \lambda_k^{(\zeta+\eta)/2} \mu_k < \infty.
	\end{align*}
	A canonical example is the fractional inverse Dirichlet Laplacian covariance, where $Q = -\Delta_{\cD}^{-\sigma}$ for some $\sigma>d/2$. This implies $\mu_k = \lambda_k^{-\sigma}$ and the summability condition becomes $\sum_k \lambda_k^\gamma (\lambda_k^{-\sigma})^2 = \sum_k \lambda_k^{\gamma - 2\sigma} < \infty$. By Weyl's law, $\lambda_k \simeq k^{2/d}$, so this sum converges if $\gamma < 2\sigma-d/2$, while the supremum conditions are fulfilled for $\beta \le 2\sigma + 1 - d - \epsilon$ and all $\eta, \zeta$ such that $\eta + \zeta \le 2\sigma$
\end{example}

We close this subsection with a remark on the relation between Assumption \ref{ass:rate}\ref{ass:rate:q} and \ref{ass:Ih}\ref{ass:Ih:q:2}.

\begin{remark}
	With the condition $\| A^{\zeta/2} Q A^{\eta/2} \|_{\cL(H)} = \| Q \|_{\cL(\dot{H}^{-\eta},\dot{H}^\zeta)} < \infty$ it is implicitly assumed that $Q$ extends to an operator on $\dot{H}^{-\eta}$. Since $Q$ is symmetric, the condition is equivalent to $Q$ extending to an operator on $\dot{H}^{-\zeta}$ satisfying $\| Q \|_{\cL(\dot{H}^{-\zeta},\dot{H}^{\eta})} = \| A^{\frac \eta 2} Q A^{\frac \zeta 2} \|_{\cL(H)} < \infty$. A corresponding one-sided condition is stronger in the sense that if $\|A^{\gamma/2} Q\|_{\cL(H)} < \infty$, then by operator interpolation, $\|A^{\zeta/2} Q A^{\eta/2}\|_{\cL(H)} < \infty$ for all $\zeta, \eta \ge 0$ with $\zeta+\eta = \gamma$. The converse, however, is not generally true.
\end{remark}

We now turn to the asymptotic theory of the  realized covariation \eqref{eq:realized-variation}. We start with rates of convergence in Section \ref{Sec: Rates of Convergence} and then discuss central limit theorems in Section \ref{Sec: Asymptotic Normality}.

\subsection{Convergence rates}\label{Sec: Rates of Convergence}

The subsequent theorem contains the rates of convergence for the realized covariation as an estimator of the covariance $Q$.

In the theorem and its proof the bias and the variance of the estimator $RV_T^{\Delta,O}/T$ for the infinite-dimensional covariance $Q$ are defined by
\begin{align*}
	Bias\left(\frac{RV_T^{\Delta,O}}T\right) &= \mathbb E\left[\frac{RV_T^{\Delta,O}}T\right]-Q, \quad \text{and}\\ Var\left(\frac{RV_T^{\Delta,O}}T\right) &=  \mathbb E\left[\left\|\frac{RV_T^{\Delta,O}}T-\mathbb E\left[\frac{RV_T^{\Delta,O}}T\right]\right\|_{\cL_2(H)}^2\right].
\end{align*}
\begin{theorem}
	\label{thm:RV-convergence}
	The following quantification of the bias and variance holds true.
	\begin{enumerate}[label=(\Alph*)]
		\item If $O = I$ and Assumption~\ref{ass:rate} is fulfilled, then
		\begin{align*}
			\left\| \mathrm{Bias}\left(\frac{RV^{\Delta,I}_T}{T}\right)\right\|_{\cL_2(H)}^2 &= \cO(\Delta^{\min(\gamma,2)} + T^{-2} \Delta^{\min(2\iota,2)}), \quad \text{and}\\
			\mathrm{Var}\left(\frac{RV^{\Delta,I}_T}{T}\right) &= \cO(T^{-1}\Delta + T^{-2} \Delta^{\min(2\iota,2)}).
		\end{align*}
		
		\item If $O = P_h$ and Assumption~\ref{ass:Ph} is fulfilled, then
		\begin{align*}
			\left\| \mathrm{Bias}\left(\frac{RV^{\Delta,P_h}_T}{T}\right)\right\|_{\cL_2(H)}^2 &= \cO(h^{\min(2\gamma,2)}+\Delta^{\min(\gamma,2)} + T^{-2} \Delta^{\min(2\iota,2)}), \quad \text{and}\\
			\mathrm{Var}\left(\frac{RV^{\Delta,P_h}_T}{T}\right) &= \cO(T^{-1}\Delta + T^{-2} \Delta^{\min(2\iota,2)}).
		\end{align*}
		
		\item Let $O = I_h$ and suppose that Assumption~\ref{ass:Ih} is fulfilled and that Assumption~\ref{ass:rate}\ref{ass:rate:initial} holds with $\iota > d/2$.  Let $\psi \in (d/2,1+\zeta)$ and write $\tilde \epsilon = \max(\psi-\zeta,0)+\max(d/2-\eta+\epsilon,0)$. Then, for sufficiently small $\epsilon> 0$,
		\begin{align*}
			\left\| \mathrm{Bias}\left(\frac{RV^{\Delta,I_h}_T}{T}\right)\right\|_{\cL_2(H)}^2 &= \cO(T^{-2}(h^{\min(4\iota,8)} + \Delta^{\min(2\iota,2)}) \\
			&\hspace{4em}+ \Delta^{-\tilde \epsilon} h^{\min(2\psi,4)} + \Delta^{\min(\gamma,2)}), \quad \text{and}\\
			\mathrm{Var}\left(\frac{RV^{\Delta,I_h}_T}{T}\right) &= \cO(T^{-1} \Delta^{\min(1,\beta)} + T^{-2}h^{\min(4\iota,8)} ).
		\end{align*}
	\end{enumerate}
\end{theorem}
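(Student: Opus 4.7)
The plan is to exploit the mild-solution representation \eqref{eq:mild} by writing each increment as
\begin{equation*}
	\Delta_i X := X_{i\Delta}-X_{(i-1)\Delta} = N_i + (S(\Delta)-I)X_{(i-1)\Delta}, \qquad N_i := \int_{(i-1)\Delta}^{i\Delta}S(i\Delta-s)\,\dd W_s,
\end{equation*}
so that $N_i$ is mean zero and independent of $\cF_{(i-1)\Delta}$. Splitting $X_{(i-1)\Delta}$ further into its deterministic contribution $S((i-1)\Delta)X_0$ and its stochastic Volterra part expresses $(\Delta_i X)^{\otimes 2}$ as a sum of $N_i^{\otimes 2}$, an initial-condition piece, a past-noise piece involving $(S(\Delta)-I)C_{(i-1)\Delta}(S(\Delta)-I)$ with $C_t := \int_0^t S(r)QS(r)\,\dd r$, and cross terms that vanish in expectation by independence. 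The bias is thereby concentrated in these three self-outer-products; the variance inherits the analogous decomposition with Wick-type corrections.

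The analysis then rests on the analytic-semigroup smoothing bound $\|(S(r)-I)A^{-\alpha/2}\|_{\cL(H)}\lesssim r^{\min(\alpha,2)/2}$ valid for $r,\alpha\ge 0$. For setting (A), the leading bias $\Delta^{-1}\int_0^\Delta S(r)QS(r)\,\dd r - Q$ is controlled in $\cL_2(H)$ by combining this estimate with Assumption~\ref{ass:rate}\ref{ass:rate:q}, yielding the $\Delta^{\min(\gamma,2)}$ term; the initial-condition contribution produces $T^{-2}\Delta^{\min(2\iota,2)}$ via Assumption~\ref{ass:rate}\ref{ass:rate:initial}, and the stationary covariance $C_\infty$ absorbs the past-noise piece with the same rate. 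For the variance, independence of the $N_i$ and Wick's formula yield $\Var(\sum_i N_i^{\otimes 2})\lesssim \sum_i \bigl\|\int_0^\Delta S(r)QS(r)\,\dd r\bigr\|_{\cL_2(H)}^2\lesssim T\Delta\|Q\|_{\cL_2(H)}^2$, giving the $T^{-1}\Delta$ bound; cross terms with $X_{(i-1)\Delta}$ are handled by the same smoothing estimate applied in pairs.

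Setting (B) is handled by replacing every occurrence of $X$ by $P_hX$; the bias then gains the discretization term $\|P_hQP_h - Q\|_{\cL_2(H)}^2$, and writing $P_hQP_h - Q = (P_h-I)Q P_h + Q(P_h-I)$ and applying Lemma~\ref{lem:obs-error} with Assumption~\ref{ass:Ph} bounds it by $h^{\min(2\gamma,2)}$. Since $P_h$ is a contraction on $H$, the variance rate is unchanged.

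The chief obstacle is setting (C). Because $I_h$ is not a contraction on $H$ and pointwise evaluation of $X$ is only well-defined in the cylindrical-Gaussian framework of Lemma~\ref{lem:obs-hs}, the clean covariance formulas above must be reinterpreted through that lemma, and only the \emph{operator}-norm regularity of Assumption~\ref{ass:Ih}\ref{ass:Ih:q:1}--\ref{ass:Ih:q:2} on $Q^{1/2}$ and $Q$ is available. My plan is to absorb each factor of $I_h-I$ into an accompanying factor of the semigroup, exploiting $\|(I_h-I)A^{-\psi/2}\|_{\cL(H)}\lesssim h^{\min(\psi,2)}$ (Lemma~\ref{lem:obs-error}) together with analyticity to lift the argument into the Sobolev regularity required by Assumption~\ref{ass:Ih}\ref{ass:Ih:q:2} and Lemma~\ref{lem:obs-hs}. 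Whenever the target regularity exceeds what is directly available, the shortfall is extracted from the semigroup at the cost of a negative power of $\Delta$, producing the factor $\Delta^{-\tilde\epsilon}$ in front of $h^{\min(2\psi,4)}$ with $\tilde\epsilon = \max(\psi-\zeta,0)+\max(d/2-\eta+\epsilon,0)$. The same trade-off limits the variance rate to $T^{-1}\Delta^{\min(1,\beta)}$, since Assumption~\ref{ass:Ih}\ref{ass:Ih:q:1} supplies only $\beta$ units of smoothing through $Q^{1/2}$. Finally, the hypothesis $\iota>d/2$ is imposed precisely so that the initial-condition piece is itself pointwise well-defined, yielding the $T^{-2}(h^{\min(4\iota,8)}+\Delta^{\min(2\iota,2)})$ contribution.
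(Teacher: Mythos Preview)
Your decomposition and bias analysis match the paper's approach closely. The gap is in the variance bound. Writing $\Delta_i X = N_i + M_i$ with $M_i = (S(\Delta)-I)X_{(i-1)\Delta}$, you correctly observe that $\Var\bigl(\sum_i N_i^{\otimes 2}\bigr) \lesssim T\Delta$ by independence, but the claim that ``cross terms with $X_{(i-1)\Delta}$ are handled by the same smoothing estimate applied in pairs'' does not go through. The $M_i$ are correlated across $i$, so $\Var\bigl(\sum_i (\Delta_i X)^{\otimes 2}\bigr)$ contains off-diagonal covariances which, by Wick's formula, reduce to quantities like $\|E(\Delta)S((i-j)\Delta)C_{(j-1)\Delta}E(\Delta)\|_{\cL_2(H)}^2$. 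Applying $\|E(\Delta)A^{-1/2}\|_{\cL(H)}\lesssim \Delta^{1/2}$ twice gives a per-term bound that does not decay in $i-j$, and the double sum over $i,j$ has order $(T/\Delta)^2$ terms, so the naive estimate overshoots by a factor $T/\Delta$.

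The paper circumvents this by introducing the auxiliary estimator $\mathrm{SARCV}_T^{\Delta,O} = \sum_i (\tilde\Delta W_O^i)^{\otimes 2}$ built from the i.i.d.\ increments $\tilde\Delta W_O^i$, for which the variance is immediate (Lemma~\ref{lem:var-conv}). The real work is then Lemma~\ref{lem: RV-SARCV asympt equiv}, which shows that $(RV-\E RV)-(\mathrm{SARCV}-\E\mathrm{SARCV})$ is of smaller order; this requires exactly the off-diagonal analysis you are missing. The double sums $\sum_j \sum_{i>j}\|E(\Delta)^{1/2}S((i-j)\Delta)\Gamma\|_{\cL_2(H)}^2$ are controlled not by smoothing alone but via the explicit geometric series $\sum_k (1-e^{-\lambda\Delta})e^{-2\lambda k\Delta} \lesssim 1$ in each eigenmode (Lemma~\ref{lem: second moments of increments}). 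For $O=I_h$ the same geometric-sum mechanism, combined with Lemma~\ref{lem:obs-hs} and Assumption~\ref{ass:Ih}\ref{ass:Ih:q:1}, is what produces the $\Delta^{\min(1,\beta)}$ variance rate. Without this summability argument your variance bound in all three settings is incomplete.
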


\begin{remark}
    Note that we included large $T$ behaviour into the rates in Theorem \ref{thm:RV-convergence}. While this is not of direct use for pure infill-asymptotics, it is interesting to observe that the variance vanishes as $T\to\infty$, while the Bias (apart from the parts that concern the initial condition) does not disappear with $T\to \infty$. This leaves potential to estimate the bias when $T\to \infty$ can be assumed, a route that might be investigated in future research.
\end{remark}

Using Theorem \ref{thm:RV-convergence}, we obtain a bound on the root mean square error (RMSE) of the realized covariation in the different sampling settings.

\begin{theorem}
	\label{thm:RV-convergence-corollary}
	Suppose that $T$ is fixed and that Assumption~\ref{ass:rate} holds with $\iota \ge \gamma/2$.
	\begin{enumerate}[label=(\Alph*)]
		\item If $O=I$, the RMSE fulfills
		\[
		\Big\| \frac{RV^{\Delta,I}_T}{T} - Q \Big\|_{L^2(\Omega,\cL_2(H))} = \cO(\Delta^{\min(\gamma,1)/2}).
		\]
		\item If $O=P_h$, Assumption~\ref{ass:Ph} is satisfied, and we couple the spatial and temporal resolutions such that $h = \cO(\Delta^{1/2})$, the RMSE fulfills
		\[
		\Big\| \frac{RV^{\Delta,P_h}_T}{T} - Q \Big\|_{L^2(\Omega,\cL_2(H))} = \cO(\Delta^{\min(\gamma,1)/2}).
		\]
		\item Suppose that $O = I_h$, that Assumption~\ref{ass:Ih} is satisfied and and that $\iota > d/2$ in Assumption~\ref{ass:rate}\ref{ass:rate:initial}. Let $\psi < \iota$ and $\tilde\epsilon$ be as in Theorem~\ref{thm:RV-convergence}.
		If we couple the spatial and temporal resolutions by $h = \cO(\Delta^{(\min(\gamma,1,\beta)+\tilde\epsilon)/\min(2\psi,4)})$, which is (with the choices $\eta=\zeta= (d+\epsilon+\beta-1)/2, \psi:= (d+1+\beta)/2$ and assuming $\beta = \gamma \le 1$) in particular the case if 
		$$h= \begin{cases}
			\mathcal O\left(\Delta^{\frac{3}{2d+2}}\right) & d\leq 2\\
			\mathcal O\left(\Delta^{\frac {3}8}\right) & d= 3,
		\end{cases}
		$$
		then for $O=I_h$, the RMSE fulfills
		\[
		\Big\| \frac{RV^{\Delta,I_h}_T}{T} - Q \Big\|_{L^2(\Omega,\cL_2(H))} = \cO(\Delta^{\min(\gamma,\beta,1)/2}).
		\]
	\end{enumerate}
\end{theorem}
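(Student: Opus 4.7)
The plan is to apply Theorem~\ref{thm:RV-convergence} via the bias--variance decomposition
\begin{equation*}
\bigl\| RV^{\Delta,O}_T/T - Q \bigr\|_{L^2(\Omega,\cL_2(H))}^2 = \bigl\| \mathrm{Bias}(RV^{\Delta,O}_T/T)\bigr\|_{\cL_2(H)}^2 + \mathrm{Var}(RV^{\Delta,O}_T/T),
\end{equation*}
combined with the sampling-specific coupling between $h$ and $\Delta$. With $T$ fixed and the standing assumption $\iota \ge \gamma/2$, the estimate $\Delta^{\min(2\iota,2)} \lesssim \Delta^{\min(\gamma,2)}$ lets me absorb every initial-condition term of the form $T^{-2}\Delta^{\min(2\iota,2)}$ appearing in Theorem~\ref{thm:RV-convergence} into temporal-discretization pieces; similarly $T^{-2}h^{\min(4\iota,8)}$ is absorbed by a spatial-variance piece in case (C).

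Cases (A) and (B) are then very short. For $O=I$ the dominant contributions are bias squared of order $\Delta^{\min(\gamma,2)}$ and variance of order $\Delta$, giving MSE of order $\Delta^{\min(\gamma,1)}$. For $O=P_h$ the only additional piece is the spatial bias $h^{\min(2\gamma,2)}$, which under $h=\cO(\Delta^{1/2})$ becomes $\cO(\Delta^{\min(\gamma,1)})$ and matches the temporal bias. Taking square roots yields the claimed rates in both cases.

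Case (C) is the main obstacle. Setting $\alpha := (\min(\gamma,1,\beta) + \tilde\epsilon)/\min(2\psi,4)$ so that the hypothesis becomes $h=\cO(\Delta^\alpha)$, I would verify the following four bounds, each of order $\Delta^{\min(\gamma,\beta,1)}$: (i) the spatial-bias term $\Delta^{-\tilde\epsilon}h^{\min(2\psi,4)}$ is $\cO(\Delta^{\min(\gamma,1,\beta)})$ by the very definition of $\alpha$; (ii) and (iii), the temporal contributions $\Delta^{\min(\gamma,2)}$ and $\Delta^{\min(1,\beta)}$ are trivially dominated by $\Delta^{\min(\gamma,\beta,1)}$; (iv) for the spatial-variance term $h^{\min(4\iota,8)}$, the hypothesis $\psi<\iota$ yields $\min(2\psi,4)\le\min(2\iota,4)\le\min(4\iota,8)$, hence
\begin{equation*}
\alpha\min(4\iota,8) \;\ge\; \alpha\min(2\psi,4) \;=\; \min(\gamma,1,\beta)+\tilde\epsilon \;\ge\; \min(\gamma,\beta,1),
\end{equation*}
so $h^{\min(4\iota,8)} = \cO(\Delta^{\min(\gamma,\beta,1)})$. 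Summing the four contributions and taking square roots delivers the claimed RMSE of order $\cO(\Delta^{\min(\gamma,\beta,1)/2})$. The tabulated exponents $3/(2d+2)$ (for $d\le 2$) and $3/8$ (for $d=3$) are then obtained by direct substitution: with $\eta=\zeta=(d+\epsilon+\beta-1)/2$ and $\psi=(d+1+\beta)/2$ one computes $\tilde\epsilon=(3-\beta)/2$, giving $\alpha=(\beta+3)/(2\min(d+1+\beta,4))$, which simplifies case-by-case using $\min(d+1+\beta,4)=d+1+\beta$ for $d\le 2$ and $\min(d+1+\beta,4)=4$ for $d=3$.
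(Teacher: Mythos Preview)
Your approach is correct and is exactly what the paper intends: Theorem~\ref{thm:RV-convergence-corollary} is stated as a corollary of Theorem~\ref{thm:RV-convergence} with no separate proof given, and your bias--variance decomposition together with the term-by-term bookkeeping is precisely how one extracts it. Cases (A) and (B) are handled correctly, and in case (C) your verification of items (i)--(iv), including the use of $\psi<\iota$ to control $h^{\min(4\iota,8)}$, is clean.

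One small wrinkle in your last sentence: with $\eta=\zeta=(d+\epsilon+\beta-1)/2$, $\psi=(d+1+\beta)/2$ and $\beta=\gamma\le 1$ you correctly compute $\tilde\epsilon=(3-\beta)/2$ and hence $\alpha=(\beta+3)/(2\min(d+1+\beta,4))$. But this does not \emph{simplify} to the tabulated exponents; rather, for $d\le 2$ one checks $3/(2d+2)\ge \alpha$ (equivalently $3(d+1+\beta)\ge(\beta+3)(d+1)$, i.e.\ $\beta(2-d)\ge 0$), so the tabulated exponent is a sufficient upper bound on $\alpha$. For $d=3$ your formula gives $\alpha=(\beta+3)/8$, which equals $3/8$ only when $\beta=0$; for $\beta>0$ the tabulated exponent $3/8$ is strictly smaller than $\alpha$ and does not suffice. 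This appears to be an imprecision in the paper's stated sufficient condition for $d=3$ rather than a defect in your argument, but you should phrase the $d\le 2$ step as an inequality rather than a simplification, and flag the $d=3$ case.
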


\begin{remark}
	The mild solution $X$ is not necessarily a continuous It\^o semimartingale in $H$. When the regularity condition $\gamma \ge 1$ from Assumption~\ref{ass:rate} is not met, the convergence rates in Theorem~\ref{thm:RV-convergence-corollary} are slower than the canonical $\mathcal{O}(\Delta^{1/2})$ rate (c.f. \cite[Theorem  5.4.2]{JacodProtter2012}). Proposition~\ref{prop:lower-bound} reveals that these slower rates are, up to logarithmic factors, optimal. A standard sufficient condition for $X$ to be a strong solution, and thus a semimartingale, is that $\|A^{1/2}Q^{1/2}\|_{\cL_2(H)} < \infty$ (see \cite[Proposition 6.18]{DPZ2014}). Our framework, however, relies on the potentially much weaker conditions of Assumption~\ref{ass: minimal Assumption on A} and Assumption~\ref{ass:rate}.
\end{remark}

\begin{remark}
	The rates in Theorem~\ref{thm:RV-convergence-corollary} can be interpreted more concretely in a practical scenario of $X_0=0$, the elliptic operator $A$ from Example~\ref{ex:elliptic}, and the widely-used Matérn covariance kernel from Example~\ref{ex:stationary-kernels}. In this setting, the convergence rate is determined by an interplay between operator's boundary conditions, the kernel's smoothness, and the spatial sampling scheme.
	
	For continuous sampling or local averages ($O=I$ or $O=P_h$), the achievable regularity $\gamma$ is dictated by the compatibility between the kernel and the operator. With Dirichlet boundary conditions, the equivalence in \eqref{eq:sobolev_id_theta_def} limits us to $\gamma < 1/2$, yielding a rate slightly worse than $\Delta^{1/4}$. In contrast, Neumann boundary conditions allow for $\gamma=1$ and an optimal rate of $\cO(\Delta^{1/2})$ provided the Matérn smoothness parameter $\nu$ fulfills $\nu \ge 1/2 - d/4$.
	
	The pointwise sampling setting ($O=I_h$) is more intricate, due to the interaction term $\Delta^{-\tilde\epsilon} h^{\min(2\psi,4)}$ in the bias and a variance term dependent on $\beta$. Pointwise sampling can still achieve the same optimal temporal rates as the other schemes and require fewer spatial observations. For a Matérn kernel under Neumann conditions, we can set $\tilde\epsilon=0$ by choosing, for example, $\eta = d/2+\epsilon$ and $\psi = \zeta = \min(2\nu+d/2-\epsilon, 3/2)$. With this choice, and by setting $\beta=1$ (which is permissible), the temporal convergence rate becomes $\cO(\Delta^{\min(\gamma,1)/2})$, matching the other schemes. For a Matérn kernel with smoothness $\nu > 1-d/4$, we have $\gamma \ge 1$, so the coupling $h = \cO(\Delta^{\min(\gamma,1)/\min(2\psi,4)})$ becomes $h =  \cO(\Delta^{1/3})$. The general-purpose coupling can (depending on $d$) thus be quite conservative.
	
	These restrictions on parameters and the overall complexity arise because the Matérn kernel does not inherently satisfy the boundary conditions imposed by $A$. In contrast, for a kernel that commutes with $A$, such as the fractional inverse Laplacian $Q = A^{-\sigma}$ from Example~\ref{ex:kernels-commuting-with-A}, the analysis simplifies considerably. If $\sigma > \max(d/2,1/2 + d/4)$, the optimal temporal convergence rate of $\cO(\Delta^{1/2})$ is achieved for all three sampling schemes. The required coupling from Theorem~\ref{thm:RV-convergence-corollary} then simplifies, in the worst case scenario as $\sigma \searrow d/2$, to $h = \cO(\Delta^{1/d})$.
\end{remark}

The next proposition shows that, up to a logarithmic factor, the temporal rate of convergence established in Theorem~\ref{thm:RV-convergence} cannot be improved.
\begin{proposition}
	\label{prop:lower-bound}
	For any $\gamma \in (1/2, 1)$ and fixed $T>0$, there exist operators $A$ and $Q$ satisfying Assumption~\ref{ass:rate} such that for a constant $C>0$ and all sufficiently small $\Delta > 0$, 
	$$
	\Big\| \frac{RV^{\Delta,I}_T}{T} - Q \Big\|_{L^2(\Omega,\cL_2(H))} \ge \left\| \mathbb{E}\left[\frac{RV^{\Delta,I}_T}{T}\right] - Q \right\|_{\mathcal{L}_2(H)} \ge C \frac{\Delta^{\gamma/2}}{|\log \Delta|}. $$
\end{proposition}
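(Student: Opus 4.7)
The plan is to construct explicit self-adjoint operators $A$ and $Q$ that commute on a common eigenbasis, reducing everything to scalar sums over the spectrum, and then to choose the eigenvalues of $Q$ to sit at the joint borderline of summability forced by the trace-class condition and Assumption~\ref{ass:rate}\ref{ass:rate:q}. Specifically, I would take $\cD = (0,\pi)$ and $A = -d^2/dx^2$ with homogeneous Dirichlet boundary conditions, so $\lambda_k = k^2$ and the eigenbasis is $e_k(x)=\sqrt{2/\pi}\sin(kx)$, $k \in \N$. Define $Q$ diagonally by $Qe_k = \mu_k e_k$ with
\begin{equation*}
\mu_k := c_0\, k^{-\gamma - 1/2}(\log(k+2))^{-1},\quad c_0 > 0.
\end{equation*}
One checks that (i) $\sum_k \mu_k <\infty$ since $\gamma + 1/2 > 1$, so $Q$ is trace class, and (ii) $\sum_k \lambda_k^\gamma \mu_k^2 \asymp \sum_k k^{-1}(\log(k+2))^{-2} < \infty$, giving Assumption~\ref{ass:rate}\ref{ass:rate:q}. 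I would then choose $X_0$ Gaussian with the invariant covariance $Q_\infty = (2A)^{-1}Q$ so that the mild solution $X$ is stationary; Fernique's theorem yields $X_0 \in L^p(\Omega,H)$ for every $p<\infty$, so Assumption~\ref{ass:rate}\ref{ass:rate:initial} holds with $\iota=0$, and Assumption~\ref{ass: minimal Assumption on A} is immediate.

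Under this setup, the spectral coordinates $\xi_k(t) := \langle X_t, e_k\rangle$ are independent stationary one-dimensional Ornstein--Uhlenbeck processes with stationary variance $\mu_k/(2\lambda_k)$, and a direct computation gives
\begin{equation*}
\mathbb{E}[(\xi_k(i\Delta) - \xi_k((i-1)\Delta))^2] = \frac{\mu_k(1 - e^{-\lambda_k\Delta})}{\lambda_k},
\end{equation*}
independently of $i$, while cross-expectations vanish for $k\neq l$ by independence. Hence $\mathbb{E}[RV^{\Delta,I}_T/T]$ is diagonal in $(e_k)$, and its $k$-th diagonal entry equals
\begin{equation*}
\frac{\lfloor T/\Delta\rfloor\Delta}{T}\cdot\mu_k\frac{1-e^{-\lambda_k\Delta}}{\lambda_k\Delta} = \mu_k\frac{1-e^{-\lambda_k\Delta}}{\lambda_k\Delta} + \cO(\mu_k\Delta/T).
\end{equation*}
Subtracting $\mu_k$ and writing $\psi(x) := 1 - (1-e^{-x})/x$, the $k$-th entry of the bias is $b_k = -\mu_k\psi(\lambda_k\Delta) + \cO(\mu_k\Delta/T)$.

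Since $\psi$ is monotone increasing with $\psi(x)\ge\psi(1) = 1/e$ for $x\ge 1$, I would restrict the Hilbert--Schmidt sum to indices $k \ge \lceil\Delta^{-1/2}\rceil$, for which $\lambda_k\Delta\ge 1$, to get for all sufficiently small $\Delta>0$,
\begin{equation*}
\bigl\|\mathbb{E}[RV^{\Delta,I}_T/T] - Q\bigr\|^2_{\cL_2(H)} = \sum_k b_k^2 \gtrsim \sum_{k \ge \Delta^{-1/2}}\frac{1}{k^{2\gamma+1}\log^2(k+2)} \gtrsim \frac{\Delta^\gamma}{|\log\Delta|^2},
\end{equation*}
using the standard tail estimate $\sum_{k\ge K} k^{-2\gamma-1}(\log k)^{-2}\asymp K^{-2\gamma}(\log K)^{-2}$. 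Taking square roots yields the bias lower bound, and the RMSE lower bound follows from the elementary inequality $\|Z-Q\|^2_{L^2(\Omega,\cL_2(H))} \ge \|\mathbb{E}[Z]-Q\|^2_{\cL_2(H)}$.

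The main conceptual point (and the only non-routine step) is that, because $A$ must have compact inverse, one cannot place all of the mass of $Q$ on a single eigenvalue $\lambda_*$ chosen as a function of $\Delta$; instead one must spread the trace across infinitely many modes while simultaneously respecting $\|A^{\gamma/2}Q\|_{\cL_2(H)}<\infty$. For $\gamma\in(1/2,1)$ both constraints are active and force the logarithmic correction in the weight $(\log k)^{-2}$. This correction is exactly the source of the $|\log\Delta|$ factor, and reflects a genuine logarithmic gap between the upper bound of Theorem~\ref{thm:RV-convergence-corollary} and what is achievable by any adversarial choice of $(A,Q)$.
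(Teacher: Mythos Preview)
Your construction is essentially the same as the paper's: both take $A$ the Dirichlet Laplacian with $\lambda_k=k^2$ and $Q$ diagonal with $\mu_k\asymp k^{-\gamma-1/2}(\log k)^{-1}$, chosen to saturate both the trace-class and the $\|A^{\gamma/2}Q\|_{\cL_2(H)}$ constraints, and then lower-bound the diagonal bias by restricting to the spectral range where $\lambda_k\Delta\asymp 1$. Your use of the stationary initial condition (in place of the paper's $X_0=0$) is a mild simplification that makes the increment variance $i$-independent and avoids the paper's more involved algebraic manipulation of the finite-time bias; conversely, the paper restricts to \emph{low} frequencies $j\le\Delta^{-1/2}$ and bounds $e^{-2\lambda_j\Delta}$ from below, whereas you restrict to \emph{high} frequencies and bound $\psi(\lambda_k\Delta)$ from below---these are dual and yield the same estimate.
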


In the next subsection, we characterize the asymptotic distribution of the realized covariation.
\subsection{Asymptotic normality}\label{Sec: Asymptotic Normality}
At this point, we turn to asymptotic normality of the operator-valued realized variation.

\begin{theorem}
	\label{thm: CLT}
	Suppose that $T$ is fixed and that Assumption~\ref{ass:rate} is fulfilled with $2\iota \ge \gamma \ge 1$. Define $\Gamma: \cL_2(H)\to \cL_2(H)$ by $$\Gamma B:= Q(B+B^*)Q.$$
	Then, the following claims on convergence in distribution with respect to the Hilbert-Schmidt norm  hold true.
	\begin{enumerate}[label=(\Alph*)]
		\item If $O=I$, then
		\begin{align*}
			\Delta^{-\frac 12}\left(\frac{\mathrm{RV}^{\Delta,I}_T}{T} - Q \right) \overset{d}{\longrightarrow} N(0,\Gamma)\qquad \text{ as } \Delta\to 0.
		\end{align*}
		\item If $O=P_h$, suppose also that Assumption~\ref{ass:Ph} is fulfilled, and couple the spatial and temporal resolutions by $h=o(\Delta^{1/2})$. Then
		\begin{align*}
			\Delta^{-\frac 12}\left(\frac{\mathrm{RV}^{\Delta,P_h}_{T}}{T} - Q \right) \overset{d}{\longrightarrow} N(0,\Gamma)\qquad \text{ as } \Delta\to 0.
		\end{align*}
		\item If $O=I_h$, suppose also that Assumption~\ref{ass:Ih} is fulfilled with $\beta \ge 1$ and that $\iota > d/2$ in Assumption~\ref{ass:rate}\ref{ass:rate:initial}. Let $\psi$ and $\tilde\epsilon$ be as in Theorem~\ref{thm:RV-convergence}. If we couple the spatial and temporal resolutions by $h = \cO(\Delta^{(1+\tilde\epsilon)/\min(2\psi,4)})$ which is satisfied, for instance, if 
		$$h= \begin{cases}
			\mathcal O\left(\Delta^{\frac{3}{2d+2}}\right) & d\leq 2\\
			\mathcal O\left(\Delta^{\frac {3}8}\right) & d= 3,
		\end{cases}
		$$
		then
		\begin{align*}
			\Delta^{-\frac 12}\left(\frac{\mathrm{RV}^{\Delta,I_h}_{T}}{T} - Q \right) \overset{d}{\longrightarrow} N(0,\Gamma)\qquad \text{ as } \Delta\to 0.
		\end{align*}
	\end{enumerate}
\end{theorem}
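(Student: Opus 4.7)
The plan is to isolate an i.i.d.\ main term from the realized variation and reduce the CLT to the Lindeberg--L\'evy theorem in the separable Hilbert space $\cL_2(H)$. Starting from the mild formula \eqref{eq:mild}, I would decompose each increment as $X_{i\Delta}-X_{(i-1)\Delta} = (S(\Delta)-I)X_{(i-1)\Delta} + Z_i$, where $Z_i := \int_{(i-1)\Delta}^{i\Delta} S(i\Delta-s)\,\dd W_s$ are independent centered $H$-valued Gaussians with common covariance $Q_\Delta := \int_0^\Delta S(u)QS(u)^*\,\dd u$. Expanding the tensor squares in $RV^{\Delta,O}_T$ isolates the ``pure noise'' sum $\sum_i (OZ_i)^{\otimes 2}$ from cross-terms involving the relaxation $(S(\Delta)-I)X_{(i-1)\Delta}$ and a deterministic initial-condition contribution.

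First I would split $\Delta^{-1/2}(RV^{\Delta,O}_T/T - Q)$ into a fluctuation and a bias part. The bias $\Delta^{-1/2}(\E[RV^{\Delta,O}_T/T] - Q)$ is dispatched via Theorem~\ref{thm:RV-convergence} combined with the hypotheses $\gamma\ge 1$, $2\iota\ge\gamma$ and the stated couplings: in (A) and (B) this is a direct inspection of the $\cO(\Delta^{\min(\gamma,2)})$ and $\cO(h^{\min(2\gamma,2)})$ rates on the squared bias, with $h=o(\Delta^{1/2})$ absorbing the spatial term in (B); in (C) the delicate interaction term $\Delta^{-\tilde\epsilon}h^{\min(2\psi,4)}$ becomes negligible at the $\Delta^{-1/2}$-scale precisely because of the coupling $h=\cO(\Delta^{(1+\tilde\epsilon)/\min(2\psi,4)})$. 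The cross-terms between the relaxation $(S(\Delta)-I)X_{(i-1)\Delta}$ and $Z_i$ are centered and contribute only through their variance, which is absorbed by the same initial-condition bounds using $2\iota\ge\gamma$.

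For the fluctuation, I would rescale $\tilde Z_i := Z_i/\sqrt{\Delta}$ and work with the i.i.d.\ $\cL_2(H)$-valued random variables $\xi_i := (O\tilde Z_i)^{\otimes 2} - \E[(O\tilde Z_i)^{\otimes 2}]$. With $n=\lfloor T/\Delta\rfloor$, the dominant stochastic contribution equals $n^{-1}\sum_{i=1}^n \xi_i$ up to negligible remainders, so the CLT reduces to the classical Lindeberg--L\'evy theorem for i.i.d.\ triangular arrays in $\cL_2(H)$ (e.g.\ Ledoux--Talagrand). The required uniform second moment $\sup_\Delta \E\|\xi_1\|_{\cL_2(H)}^2 < \infty$ follows from the Isserlis/Wick identity $\E\|Y^{\otimes 2}\|_{\cL_2(H)}^2 = 2\|\Sigma\|_{\cL_2(H)}^2+\tr(\Sigma)^2$ for $Y\sim N(0,\Sigma)$, applied with $\Sigma=O(Q_\Delta/\Delta)O^*$: this is uniformly bounded by an expression involving $\|Q\|_{\cL_2(H)}^2$ in (A)--(B) via contractivity of $S$, and in (C) via Lemma~\ref{lem:obs-hs} together with Assumption~\ref{ass:Ih}\ref{ass:Ih:q:1}. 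Finite-dimensional convergence is then verified by projecting against arbitrary $B\in\cL_2(H)$ and applying Lindeberg--Feller in $\R$, combined with Cram\'er--Wold.

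Identification of the limiting covariance is the final step. For $Y\sim N(0,\Sigma)$, Isserlis' formula gives $\Cov(Y^{\otimes 2})B = \Sigma(B+B^*)\Sigma$ as an operator on $\cL_2(H)$. Substituting $\Sigma=O(Q_\Delta/\Delta)O^*$ and letting $\Delta,h\to 0$, the convergence $Q_\Delta/\Delta\to Q$ in Hilbert--Schmidt norm (by strong continuity/analyticity of $S$ together with Assumption~\ref{ass:rate}\ref{ass:rate:q}) and $O\to I$ in the appropriate norm (Lemma~\ref{lem:obs-error}) give $\Cov(\xi_1)\to\Gamma$. The hardest part will be case (C): the cylindrical interpretation of pointwise evaluations makes the uniform second-moment bound, the control of $\E[\xi_i]$, and the Lindeberg condition genuinely delicate, and closing the argument requires simultaneously tuning $\psi$, $\zeta$, $\eta$, $\beta$, $\tilde\epsilon$ so that the bias $\Delta^{-\tilde\epsilon}h^{\min(2\psi,4)}$, the spatial approximation remainder in the fluctuation, and the uniform HS-norm control of $(I_h\tilde Z_1)^{\otimes 2}$ are all compatible under the coupling $h=\cO(\Delta^{(1+\tilde\epsilon)/\min(2\psi,4)})$.
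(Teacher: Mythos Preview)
Your outline correctly identifies the i.i.d.\ sum $\sum_i (OZ_i)^{\otimes 2}$ as the carrier of the CLT (this is exactly the SARCV estimator in the paper's terminology), and the reduction to a Hilbert-space CLT for this piece is indeed the core of the argument. However, there is a genuine gap in how you dispose of the remainder.

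You write that the cross-terms between $(S(\Delta)-I)X_{(i-1)\Delta}$ and $Z_i$ are ``absorbed by the same initial-condition bounds using $2\iota\ge\gamma$''. This conflates two different objects. Even when $X_0\equiv 0$, the relaxation term $(S(\Delta)-I)X_{(i-1)\Delta} = -E(\Delta)Y_{(i-1)\Delta}$ is present and involves the full stochastic convolution $Y_{(i-1)\Delta} = \int_0^{(i-1)\Delta}S((i-1)\Delta-s)\,\dd W_s$; the parameter $\iota$ controls only $X_0$ and is irrelevant to this piece. More seriously, the centered remainder
\[
\sum_{i=1}^{\ulT}\Big[(OX_{i\Delta}-OX_{(i-1)\Delta})^{\otimes 2} - (OZ_i)^{\otimes 2} - \E\big[(OX_{i\Delta}-OX_{(i-1)\Delta})^{\otimes 2} - (OZ_i)^{\otimes 2}\big]\Big]
\]
is \emph{not} a sum of independent summands: $E(\Delta)Y_{(i-1)\Delta}$ and $E(\Delta)Y_{(j-1)\Delta}$ share the Wiener path on $[0,\min(i-1,j-1)\Delta]$, and $Z_j$ is correlated with $E(\Delta)Y_{(i-1)\Delta}$ whenever $i>j$. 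Showing that the $L^2(\Omega,\cL_2(H))$-norm of this remainder is $o(T^{1/2}\Delta^{1/2})$ therefore requires controlling a full double sum of off-diagonal covariances. This is precisely Lemma~\ref{lem: RV-SARCV asympt equiv} in the paper and is arguably the most technical step of the whole proof; it does not follow from any initial-condition estimate.

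Two smaller issues. First, invoking Theorem~\ref{thm:RV-convergence} for the bias only yields $\cO(\Delta^{1/2})$ at the borderline $\gamma=1$, whereas the CLT needs $o(\Delta^{1/2})$; the paper extracts the extra $\zeta_\Delta\to 0$ factor directly from Lemma~\ref{lem:time-conv}. Second, in case (C) your statement ``$O\to I$ in the appropriate norm'' is not available, since $I_h$ is not uniformly bounded on $H$; convergence $\Cov(\xi_1)\to\Gamma$ in $\cL_2(\cL_2(H))$ cannot be obtained this way. The paper instead argues tightness of the rescaled SARCV difference plus convergence of finite-dimensional projections separately (Lemmata~\ref{prop:temp-SARCV-diff-proj} and~\ref{lem: discret of SARCV}), which is a genuinely different route.
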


The central limit theorem can be used to derive goodness-of-fit tests for covariance models. This is the subject of the next section.

\section{Goodness-of-fit tests for the noise covariance\label{Sec: Universal Goodness of fit test}}

We now develop goodness-of-fit tests for the covariance $Q$ of the driving Wiener process. Precisely, we develop asymptotic tests for the hypothesis
\begin{align}\label{eq: universal test fixed}
	H_0: Q =Q_0\quad \text{ vs. }\quad H_1: Q \neq Q_0,
\end{align}
as well as a conservative test for the joint hypothesis
\begin{align}\label{eq: universal test estimated}
	H_0: Q =Q_{\theta}  \text{ for some }\theta \in \Theta \quad \text{ vs. }\quad H_1: Q \neq Q_{\theta}  \text{ for all }\theta\in \Theta .
\end{align}

\subsection{Test for fixed noise covariances}
For the test \eqref{eq: universal test fixed},  the CLT, Theorem \ref{thm: CLT} in the previous section, immediately yields with $t_{\Delta}:= \ulT \Delta$ a test statistic
\begin{equation}\label{eq: test statistic in the regular case}
	T_{\Delta}:=\Delta^{-1}\left\|\frac{RV_O^{t_{\Delta}}}{t_{\Delta}}-Q_0\right\|_{\cL_2(H)}^2  ,  
\end{equation}
which, under $H_0$ has asymptotically the same distribution as the random variable $\sum_{i=1}^{\infty} \Lambda_i^2 \zeta_i^2$, where $(\Lambda_i)_{i\in \mathbb N}$ are in decreasing order the eigenvalues of $\Gamma$ as defined in Theorem \ref{thm: CLT} and $(\zeta_i)_{i\in \mathbb N}$ is a sequence of independent standard normal random variables.  Indeed, it is simple to derive the eigenvalues of $\Gamma$ from the eigenvalues of $Q$
\begin{lemma}
	Let $Q= \sum_{i=1}^{\infty} \mu_i f_i$, then 
	$$\Gamma =Q(\cdot+\cdot^*)Q= \sum_{i=1}^{\infty} \sum_{j\leq i} 2\mu_i\mu_j F_{i,j}^{\otimes 2},$$
	with $F_{i,j}=(f_i\otimes f_j + f_j\otimes f_i)/\sqrt{ 2}$ when $j<i$ and $F_{i,i}= e_i^{\otimes 2}$, which form an orthonormal system in $\cL_{HS}(H)$.
\end{lemma}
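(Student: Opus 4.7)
The plan is to exhibit the spectral decomposition of $\Gamma$ on $\cL_2(H)$ explicitly and read off the claim. I first check that $\{F_{i,j}:j\le i\}$ is orthonormal. Since $(f_i)_{i=1}^\infty$ is an orthonormal basis of $H$, the tensors $\{f_i\otimes f_j\}_{i,j}$ form an orthonormal basis of $\cL_2(H)$, so $\langle f_i\otimes f_j, f_k\otimes f_l\rangle_{\cL_2(H)}=\delta_{ik}\delta_{jl}$. A direct computation then gives $\|F_{i,j}\|_{\cL_2(H)}=1$ and $\langle F_{i,j},F_{k,l}\rangle_{\cL_2(H)}=0$ whenever $(i,j)\neq (k,l)$ with $j\le i$, $l\le k$.

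Next, I would observe two structural facts. On the one hand, each $F_{i,j}$ is symmetric as an operator on $H$, since $(f_i\otimes f_j)^*=f_j\otimes f_i$; in particular $F_{i,j}^*=F_{i,j}$. On the other hand, using the identity $\Gamma_1(u\otimes v)\Gamma_2^*=\Gamma_1 u\otimes\Gamma_2 v$ with $\Gamma_1=\Gamma_2=Q=Q^*$, one has $Q(f_i\otimes f_j)Q=Qf_i\otimes Qf_j=\mu_i\mu_j\,f_i\otimes f_j$. Combining these, $QF_{i,j}Q=\mu_i\mu_j F_{i,j}$, and therefore
\[
\Gamma F_{i,j}=Q(F_{i,j}+F_{i,j}^*)Q=2Q F_{i,j} Q=2\mu_i\mu_j F_{i,j}.
\]
So each $F_{i,j}$ is an eigenvector of $\Gamma$ with eigenvalue $2\mu_i\mu_j$.

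To see that these eigenpairs exhaust $\Gamma$, I would decompose $\cL_2(H)$ into the orthogonal sum of its symmetric and antisymmetric subspaces. The symmetric subspace has $\{F_{i,j}:j\le i\}$ as an orthonormal basis, while the antisymmetric subspace is spanned by $\{(f_i\otimes f_j - f_j\otimes f_i)/\sqrt 2:j<i\}$. For any antisymmetric $B$ one has $B+B^*=0$, so $\Gamma B=0$; thus $\Gamma$ acts trivially on the antisymmetric part. A brief verification (using trace cyclicity together with the fact that we work over the reals) shows that $\Gamma$ is self-adjoint on $\cL_2(H)$, so its spectral decomposition is fully determined by its action on an orthonormal basis, giving
\[
\Gamma=\sum_{i=1}^{\infty}\sum_{j\le i}2\mu_i\mu_j\, F_{i,j}^{\otimes 2},
\]
where $F_{i,j}^{\otimes 2}$ is the rank-one operator $B\mapsto\langle F_{i,j},B\rangle_{\cL_2(H)}F_{i,j}$ on $\cL_2(H)$.

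The only real pitfall is bookkeeping across the two levels of tensor notation: $f_i\otimes f_j$ as a rank-one Hilbert--Schmidt operator on $H$ (with its own adjoint $f_j\otimes f_i$), versus $F_{i,j}^{\otimes 2}$ as a rank-one operator on the Hilbert space $\cL_2(H)$. Once one is careful about which inner product and which adjoint is meant at each step, the computation reduces to the tensor identities from the preliminaries and is otherwise purely algebraic.
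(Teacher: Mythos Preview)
Your proof is correct; the paper states this lemma without proof (introducing it with ``it is simple to derive the eigenvalues of $\Gamma$ from the eigenvalues of $Q$''), so there is nothing further to compare. Your argument---checking orthonormality of the $F_{i,j}$, verifying $\Gamma F_{i,j}=2\mu_i\mu_j F_{i,j}$ via the identity $Q(u\otimes v)Q=Qu\otimes Qv$ and symmetry of $F_{i,j}$, and then observing $\Gamma$ vanishes on the antisymmetric part of $\cL_2(H)$---is exactly the natural verification the paper presumably has in mind.
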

In particular, we have that
the test statistics $T_{\Delta}$ has asymptotically the same distribution as the random variable 
\begin{equation}\label{eq: Quadratic form}
	V:=\sum_{i=1}^{\infty}\sum_{j\leq i} 2\mu_i\mu_j \zeta_{i,j}^2,
\end{equation}
for an independent family of standard normal random variables $(\zeta_{i,j})_{i,j=1,...,\infty, j<i}$. If we truncate the sum, this is a generalized $\chi^2$ random variable, of which we can calculate exceedance probabilities explicitly. Due to the fast eigenvalue decay,  these probabilities are close to the ones of $V$, when enough terms in the sum are added.
This guarantees feasibility of the test outlined in the subsequent theorem. 
\begin{theorem}[Goodness-of-fit test for noise covariances]\label{thm: Universal Goodness of fit}
	Let $(\mu_k)_{k\in \mathbb N}$ be the eigenvalues of $Q_0$, w.l.o.g, in decreasing order and let $V$ be defined as in \eqref{eq: Quadratic form}.    
	Let $q_{1-\alpha}$ denote the $1-\alpha$ quantile of $V$.
	Then, the test-function
	$$\varphi_{\Delta}=\indicator_{T_{\Delta}\geq q_{1-\alpha}}$$
	defines a test that is asymptotically of level $\alpha$.
\end{theorem}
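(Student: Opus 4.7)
The plan is to combine the central limit theorem from Theorem~\ref{thm: CLT} with the continuous mapping theorem for the squared Hilbert--Schmidt norm, then read off the limiting distribution using the eigendecomposition of $\Gamma$ given in the preceding lemma. Since the theorem statement does not restrict the sampling setting, I would either prove all three cases in parallel or simply note that in each of cases (A)--(C) of Theorem~\ref{thm: CLT}, under the hypothesized coupling of $h$ and $\Delta$, we have
\begin{equation*}
  Z_\Delta := \Delta^{-1/2}\left(\frac{\mathrm{RV}_{t_\Delta}^{\Delta,O}}{t_\Delta} - Q_0\right) \xrightarrow{d} Z \sim N(0,\Gamma) \quad \text{in } \cL_2(H)
\end{equation*}
as $\Delta\downarrow 0$, under $H_0$ where $Q=Q_0$.

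Next, because $B\mapsto \|B\|_{\cL_2(H)}^2$ is continuous on $\cL_2(H)$, the continuous mapping theorem yields
\begin{equation*}
  T_\Delta = \|Z_\Delta\|_{\cL_2(H)}^2 \xrightarrow{d} \|Z\|_{\cL_2(H)}^2.
\end{equation*}
It then remains to identify $\|Z\|_{\cL_2(H)}^2$ with $V$. By the lemma immediately preceding the theorem, the orthonormal system $\{F_{i,j}: i\in\mathbb{N},\, j\leq i\}$ in $\cL_2(H)$ diagonalizes $\Gamma$ with eigenvalues $2\mu_i\mu_j$. Thus, the Karhunen--Lo\`eve expansion of $Z$ takes the form $Z = \sum_{i\geq 1,\, j\leq i} \sqrt{2\mu_i\mu_j}\,\zeta_{i,j} F_{i,j}$ for an i.i.d.\ family of standard normals $(\zeta_{i,j})$, and therefore $\|Z\|_{\cL_2(H)}^2 = \sum_{i\geq 1,\, j\leq i} 2\mu_i\mu_j\,\zeta_{i,j}^2 = V$ almost surely; note that this infinite sum converges almost surely and in $L^1$ because $\sum_i \mu_i^2 = \|Q_0\|_{\cL_2(H)}^2 < \infty$ (and hence $\sum_{i\geq 1,j\leq i} \mu_i\mu_j < \infty$).

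Finally, to conclude $\Prob(T_\Delta \geq q_{1-\alpha}) \to \alpha$, I would invoke the Portmanteau theorem, which requires continuity of the distribution function of $V$ at $q_{1-\alpha}$. This is the only delicate point: $V$ is a (generalized) infinite weighted sum of $\chi^2_1$ variables with strictly positive coefficients $2\mu_i\mu_j$, and the law of any such weighted sum with at least two nonzero coefficients (which holds whenever $Q_0$ has rank at least two, or equivalently whenever $Q_0 \neq 0$ and the problem is nondegenerate) is absolutely continuous on $(0,\infty)$ by standard convolution arguments, so its distribution function is continuous everywhere. In the degenerate case $Q_0 = 0$ the statement is trivial since $T_\Delta = 0$.

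The main conceptual work is the identification of the limit in step three; the main technical obstacle is verifying continuity of the distribution of $V$ so that the Portmanteau conclusion applies at the quantile. Everything else is a direct application of previously established results and does not require new estimates.
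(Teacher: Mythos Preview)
Your proposal is correct and follows essentially the same approach as the paper, which lays out the argument informally in the text preceding the theorem rather than as a formal proof: apply Theorem~\ref{thm: CLT} to obtain the CLT under $H_0$, use the continuous mapping theorem for the squared Hilbert--Schmidt norm, and identify the limit via the eigendecomposition of $\Gamma$ from the preceding lemma. Your treatment is in fact slightly more careful than the paper's, since you explicitly address the continuity of the distribution function of $V$ needed for the Portmanteau conclusion at the quantile, a point the paper leaves implicit.
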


The pseudocode for the implementation of the test described in Theorem \ref{thm: Universal Goodness of fit} can be found in Algorithm~\ref{Alg: Pseudocode for fixed Q GoF} for the case of spatially discrete observations. To implement this, we must discretize the Hilbert-Schmidt norm. In light of \eqref{eq:pointwise-RV-kernel-form} and \eqref{eq:spatav-RV-kernel-form}, a computationally convenient choice is to approximate the null hypothesis kernel $q_0$ by 
\begin{equation}
	\label{eq:discrete-null-kernel}
	q_0^h(x,y) = \sum_{j,k=1}^{N_h} \mathbf{C}^{0}_{jk} \psi_j(x)\psi_k(y)
\end{equation}
and compute the Hilbert-Schmidt norm by a simple matrix calculation. Here $(\psi_j)_{j=1}^{N_h}$ are given by $\psi_j = \phi_h^j$  for $O=I_h$ and by $\psi_j = \indicator_{T_h^j}$ for $O=P_h$. In the former case, we simply set $\mathbf{C}^{0}_{jk} = q_0(x_j, x_k)$ while in the latter, we take representative points (e.g., centroids) $c_j \in T_h^j$ and set $\mathbf{C}^{0}_{jk} = q_0(c_j, c_k)$. Then, the squared Hilbert-Schmidt norm of the difference is
\begin{equation*}
	\tr\left( \mathbf{G} (\mathbf{C}^{RV} - \mathbf{C}^{0})^T \mathbf{G} (\mathbf{C}^{RV} - \mathbf{C}^{0}) \right),
\end{equation*}
where $\mathbf{G}$ is the Gram matrix with entries $G_{jk} = \langle \psi_j, \psi_k \rangle$ while $\mathbf{C}^{RV}_{jk}$ is, in the notation of \eqref{eq:pointwise-RV-kernel-form} and \eqref{eq:spatav-RV-kernel-form}, $\frac{1}{T} \sum_{i=1}^{\ulT} \Delta_i X(x_j) \Delta_i X(x_k)$ or $\frac{1}{T} \sum_{i=1}^{\ulT} \overline{\Delta_i X}_{T_h^j} \overline{\Delta_i X}_{T_h^k}$. 

\begin{algorithm}[H]
	\caption{GoF Test for Fixed Covariance \(Q_0\)}
	\label{Alg: Pseudocode for fixed Q GoF}
	\begin{algorithmic}[1]
		\REQUIRE Observed increments on a spatial grid $\cT_h$, temporal step size $\Delta$, hypothesized kernel $q_0$, significance level $\alpha$.
		\STATE Compute the coefficient matrix $\mathbf{C}^{RV}$ from data and $\mathbf{C}^{0}$ from $q_0$.
		\STATE Compute the Gram matrix $G$ for the basis functions of the observation scheme.
		\STATE Compute the test statistic:
		$
		T_{\Delta} = \frac{1}{\Delta} \, \tr\left( \mathbf{G} (\mathbf{C}^{RV} - \mathbf{C}^{0})^T \mathbf{G} (\mathbf{C}^{RV} - \mathbf{C}^{0}) \right).
		$
		\STATE Approximate the eigenvalues $\{\mu_k\}_{k=1}^{N_h}$ of $Q_0$ by the eigenvalues of $\mathbf{G} \mathbf{C}^{0}$.
		\STATE Compute 
		$p = \mathbb{P}\left(\sum_{j=1}^{N_h} \sum_{k \le j} 2 \mu_j \mu_k \zeta_{jk}^2 > T_{\Delta}\right)$ 
		via Davis or Imhof’s method. 
		\STATE Set $\varphi_{\Delta} = 1$ if $p < \alpha$, else $0$.
		\RETURN $\varphi_{\Delta}$.
	\end{algorithmic}
\end{algorithm}

\begin{remark}
	The accuracy of the test implementation depends on how well the discrete approximations represent their exact counterparts. For greater power, one could approximate $q_0$ on a finer spatial grid than that used for the observations and/or use numerical quadrature. However, for simplicity, we have only presented the case where both approximations use the same basis, since under mild regularity Assumptions on $Q_0$, this does not alter the limiting behaviour of the test provided
	$h$  decreases at an appropriate rate.
	
	For instance,  in the setting $O=I_h$ and with $Q_0$ of either Matérn class or equal to $-\Delta_{\mathcal{D}}^{-\nu-d/2}$ with smoothness parameter $\nu>1/2-d/4$, this choice will not decrease the accuracy asymptotically as long as $h = o(\Delta^{1/2})$. This is because the kernel approximation \eqref{eq:discrete-null-kernel} corresponds to replacing $Q_0$ with $Q_0^h := I_h Q_0^{1/2} (I_h Q_0^{1/2})^*$ and it is not hard to see (cf.\ Examples~\ref{ex:stationary-kernels} and \ref{ex:kernels-commuting-with-A}) that 
	\[
	\| Q_0^h - Q_0\|_{\cL_2(H} \le \| I- I_h \|_{\cL(H^{2 \nu + d/2-\epsilon},H)} \| I + I_h \|_{\cL_2(H^{d/2+\epsilon},H)} \| Q_0 \|_{\cL(H^{d/2+\epsilon},H^{2 \nu + d/2-\epsilon})}
	\]
	for arbitrary $\epsilon>0$, and this quantity tends to $0$ faster than $\Delta^{1/2}$. A similar argument, combined with Weyl's inequality, controls the error in the approximation of the eigenvalues of $Q_0$.
\end{remark}
\subsection{Test for Parametric Families of Noise Covariances}

We now turn to the problem of testing whether the noise covariance belongs to a parametric class of covariance kernels, such as the Mat{\'e}rn  class or fractional powers of the inverse Laplacian. 
Specifically, we consider testing the hypothesis in \eqref{eq: universal test estimated}. To this end, we introduce the test statistic
\begin{equation}\label{eq: test statistic in the estimated parameter case}
	T_{\Delta}^*:=\min_{\theta\in \Theta}\Delta^{-1}\left\|\frac{RV_O^{t_{\Delta}}}{t_{\Delta}}-Q_{\theta}\right\|_{\cL_2(H)}^2  . 
\end{equation}
This leads to a conservative test as described in the next theorem.
\begin{theorem}[Goodness-of-fit test for parametric noise covariances models]\label{thm: Universal Goodness of fit- estimated parameters}
	We need 
	\begin{itemize}
		\item (Identifiability) The map $\theta \mapsto Q_{\theta}$ is injective;
		\item $\theta \mapsto Q_{\theta}$ is continuous with respect ot the trace norm;
		\item There almost surely exists a $\theta_{\Delta}^*$ such that the minimum in formula \eqref{eq: test statistic in the estimated parameter case} is attained. 
	\end{itemize}
	Let  $(\mu_k^*)_{k\in \mathbb N}$ be the eigenvalues of $Q_{\theta^*_{\Delta}}$, w.l.o.g, in decreasing order and let $V^*=\sum_{i=1}^{\infty}\sum_{j \leq i}2\mu_i^* \mu_j^*$. Then  $q_{1-\alpha}^*$ denotes the $1-\alpha$ quantile of $V^*$.
	Then, the test-function
	$$\varphi_{\Delta}=\indicator_{T_{\Delta}^*\geq q_{1-\alpha}^*}$$
	defines a test that is asymptotically at most of level $\alpha$.
\end{theorem}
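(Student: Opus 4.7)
Under $H_0$ there exists a $\theta_0\in\Theta$ (unique by identifiability) with $Q = Q_{\theta_0}$. Since $\theta_0$ is feasible in the minimization defining $T_\Delta^*$ in \eqref{eq: test statistic in the estimated parameter case}, we obtain the pointwise bound
\[
T_\Delta^* \;\le\; S_\Delta \;:=\; \Delta^{-1}\left\|\frac{RV_O^{t_\Delta}}{t_\Delta}-Q_{\theta_0}\right\|_{\cL_2(H)}^2.
\]
Theorem~\ref{thm: CLT} together with the continuous mapping theorem yields $S_\Delta \overset{d}{\to} V_0 := \sum_{i=1}^{\infty}\sum_{j\le i} 2\mu_i^{(0)}\mu_j^{(0)}\zeta_{ij}^2$, where $(\mu_k^{(0)})$ are the decreasing eigenvalues of $Q_{\theta_0}$.

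The second step is to show the operator-level consistency $\|Q_{\theta_\Delta^*} - Q_{\theta_0}\|_{\cL_1(H)} \to 0$ in probability. Theorem~\ref{thm:RV-convergence-corollary} gives $RV_O^{t_\Delta}/t_\Delta \to Q_{\theta_0}$ in $L^2(\Omega,\cL_2(H))$ and hence in probability in $\cL_2(H)$. The argmin property of $\theta_\Delta^*$ implies $\|RV_O^{t_\Delta}/t_\Delta - Q_{\theta_\Delta^*}\|_{\cL_2(H)} \le \|RV_O^{t_\Delta}/t_\Delta - Q_{\theta_0}\|_{\cL_2(H)}$, so by the triangle inequality $Q_{\theta_\Delta^*} \to Q_{\theta_0}$ in $\cL_2(H)$; the identifiability and trace-norm continuity of $\theta \mapsto Q_\theta$ then upgrade this to trace-norm convergence. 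By Weyl's inequality the eigenvalue sequences converge in $\ell^1$, i.e., $\sum_k |\mu_k^* - \mu_k^{(0)}| \to 0$ in probability.

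Given the $\ell^1$ convergence of eigenvalues, the conditional (on the data) distribution of $V^*$ converges weakly to that of $V_0$. Because $V_0$ is a non-degenerate infinite weighted sum of independent $\chi_1^2$'s with strictly positive weights $2\mu_i^{(0)}\mu_j^{(0)}$, its distribution function is continuous at its $1-\alpha$ quantile $q_{1-\alpha}^{(0)}$, and consequently $q_{1-\alpha}^* \overset{P}{\to} q_{1-\alpha}^{(0)}$. Slutsky's theorem applied to $S_\Delta - q_{1-\alpha}^*$ then gives
\[
\limsup_{\Delta\downarrow 0} \Prob(T_\Delta^* \ge q_{1-\alpha}^*) \le \limsup_{\Delta\downarrow 0} \Prob(S_\Delta \ge q_{1-\alpha}^*) = \Prob(V_0 \ge q_{1-\alpha}^{(0)}) = \alpha.
\]

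The main obstacle is the consistency $Q_{\theta_\Delta^*} \to Q_{\theta_0}$ in trace norm: since $\Theta$ is not assumed compact, the standard argmin argument in $\cL_2(H)$ delivers only $\cL_2(H)$-convergence, and upgrading to $\cL_1(H)$ — needed for the $\ell^1$ eigenvalue control and thus the convergence of the data-dependent quantile — requires confining the minimization to a trace-norm sublevel set on which $\theta \mapsto Q_\theta$ is proper (or an equivalent tightness hypothesis on $\{Q_\theta:\theta\in\Theta\}$ implicit in the stated continuity/identifiability assumptions). Once this is in hand the remaining steps, in particular the continuity of $F_{V_0}$ at $q_{1-\alpha}^{(0)}$ and the conditional weak convergence $V^*\Rightarrow V_0$, are essentially routine.
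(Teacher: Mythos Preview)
Your overall architecture matches the paper's: dominate $T_\Delta^*$ by $S_\Delta=T_\Delta(\theta_0)$, establish $q_{1-\alpha}^*\to q_{1-\alpha}^{(0)}$ in probability via some form of consistency of $Q_{\theta_\Delta^*}$, and combine with the CLT for $S_\Delta$ through Slutsky and continuity of $F_{V_0}$. The one substantive difference lies precisely at the obstacle you flag.

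You try to obtain $Q_{\theta_\Delta^*}\to Q_{\theta_0}$ in $\cL_1(H)$ by first getting $\cL_2(H)$-convergence from the argmin inequality and then ``upgrading'' via identifiability and trace-norm continuity of $\theta\mapsto Q_\theta$. That upgrade step does not follow: injectivity and forward continuity give you nothing about continuity of the inverse, so $\cL_2$-closeness of $Q_{\theta_\Delta^*}$ to $Q_{\theta_0}$ need not force $\theta_\Delta^*$ near $\theta_0$, and hence the trace-norm hypothesis cannot be invoked. The paper resolves this by working at the \emph{parameter} level instead: it shows the uniform convergence
\[
\sup_{\theta\in\Theta}\Big|\,\big(\Delta\,T_\Delta(\theta)\big)^{1/2}-\|Q_{\theta_0}-Q_\theta\|_{\cL_2(H)}\Big|\le \Big\|\tfrac{RV_O^{t_\Delta}}{t_\Delta}-Q_{\theta_0}\Big\|_{\cL_2(H)}\xrightarrow{p}0,
\]
and then applies standard M-estimator consistency (Theorem~5.7 in van der Vaart) to conclude $\theta_\Delta^*\xrightarrow{p}\theta_0$. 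With parameter consistency in hand, the \emph{assumed} trace-norm continuity immediately gives $Q_{\theta_\Delta^*}\to Q_{\theta_0}$ in $\cL_1(H)$, and from there $\Gamma_{\theta_\Delta^*}\to\Gamma_{\theta_0}$ in $\cL_1(\cL_2(H))$, weak continuity of $\theta\mapsto N(0,\Gamma_\theta)$, and quantile convergence follow exactly as you outline. In short, the missing idea is to route the consistency through $\Theta$ rather than through the operator space, so that the trace-norm continuity hypothesis can actually be used.
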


The test described in Theorem~\ref{thm: Universal Goodness of fit- estimated parameters} is implemented by discretizing the problem and employing numerical optimization to compute the minimizer in \eqref{eq: test statistic in the estimated parameter case}. Once the minimizer is obtained, the procedure follows analogously to Algorithm~\ref{Alg: Pseudocode for fixed Q GoF}. The corresponding pseudocode is presented in Algorithm~\ref{Alg: Pseudocode for parametric Q GoF- estimated paramaters}.

\begin{algorithm}[H]\label{Alg: Pseudocode for parametric Q GoF- estimated paramaters}
	\caption{GoF Test for Parametric Covariance Family \(\{Q_\theta\}_{\theta\in\Theta}\)}
	\begin{algorithmic}[1]
		\REQUIRE Observed increments on a spatial grid $\cT_h$, temporal step size $\Delta$, parametric family of kernels $\theta\mapsto q_\theta$, significance level $\alpha$.
		\STATE Compute the coefficient matrix $\mathbf{C}^{RV}$ from data and the Gram matrix $G$.
		\STATE Compute the test statistic by solving the minimization problem:
		\[ T_{\Delta}^* = \min_{\theta \in \Theta} \frac{1}{\Delta} \tr\left( \mathbf{G} (\mathbf{C}^{RV} - \mathbf{C}^{\theta})^T \mathbf{G} (\mathbf{C}^{RV} - \mathbf{C}^{\theta}) \right), \]
		where $\mathbf{C}^{\theta}$ is the coefficient matrix for the kernel $q_\theta$.
		\STATE Let $\theta_{\Delta}^*$ be the parameter value that attains the minimum. Proceed with Steps~3--4 of Algorithm~\ref{Alg: Pseudocode for fixed Q GoF} using $\mathbf{C}^0 := \mathbf{C}^{\theta_{\Delta}^*}$ and $T_{\Delta} := T_{\Delta}^*$.
	\end{algorithmic}
\end{algorithm}

The assumptions ensuring the validity of the test in Theorem~\ref{thm: Universal Goodness of fit- estimated parameters} must be verified separately for each parametric family of covariance kernels. In two important cases--the class of inverse fractional Laplacians and the Mat{\'e}rn class--these assumptions can be checked with relatively little effort. This is illustrated in the following two examples.

\begin{example}[Verification of assumptions for the Matérn covariance kernel]  
	Recall the definition of the Mat{\'e}rn covariance kernel on $L^2(\mathcal{D} \times \mathcal{D})$ defined in \eqref{eq: Matern kernel}. When we restrict the parameter set to a compact subset  
	$$ (\sigma^2, \nu, \rho) = \theta \in 
	\Theta = [\sigma_{\min}^2, \sigma_{\max}^2] \times [\nu_{\min}, \nu_{\max}] \times [\rho_{\min}, \rho_{\max}],
	$$
	with strictly positive lower bounds, all assumptions of Theorem \ref{thm: Universal Goodness of fit- estimated parameters} can be verified.
	
	Injectivity of the mapping follows directly from the kernel’s spectral density characterization (cf. \cite[p.\ 31]{Stein1999}).
	
	Furthermore, the Mat{\'e}rn kernel depends continuously on $\theta$ uniformly over $\mathcal{D} \times \mathcal{D}$ since $\Theta$ is compact and the kernel is smooth with respect to its parameters. Because $\mathcal{D}$ is bounded, continuity with respect to the $L^2(\mathcal{D} \times \mathcal{D})$-norm follows immediately, and hence the map $\theta \mapsto Q_{\theta}$ is continuous in the Hilbert--Schmidt norm.
	Moreover, since $tr(Q_{\theta})= \sigma^2\leq \sigma_{max}^2$, we find by dominated convergence that convergence also holds in trace norm.
	
	Finally, the compactness of $\Theta$ and continuity of $\theta \mapsto Q_\theta$ in the Hilbert--Schmidt norm guarantee the existence of a minimizer in formula \eqref{eq: test statistic in the estimated parameter case} by Weierstra{\ss}' extreme value theorem.
	\end{example}

\begin{example}[Verification of assumptions for the inverse Laplacian operator \(\Delta^{-\nu}\)]  
	Let $-\Delta = \sum_{i=1}^{\infty} \lambda_i e_i^{\otimes 2}$ be the negative Laplacian with Dirichlet boundary conditions, such that $\lambda_j \equiv j^2$.
	Then for $\nu>\frac 12$, the inverse fractional Laplacian operator $\Delta^{-\nu}$ on $L^2(\mathcal{D})$ is a symmetric, positive trace-class operator. When restricting the parameter set to a compact subset  
	$$ \nu=\theta \in
	\Theta \subseteq [\nu_{\min}, \nu_{\max}],
	$$
	with $\nu_{\min} > \frac{d}{2}$, all assumptions of Theorem \ref{thm: Universal Goodness of fit- estimated parameters} can be verified.
	
	First, injectivity of the mapping follows directly from the spectral characterization of $\Delta^{-\nu}$, as distinct values of $\nu$ produce distinct eigenvalue sequences.
	
	Next, continuity in trace norm follows since for $\nu, \rho > \frac{d}{2}$ we have via the mean value theorem the estimate
	$$
	\|\Delta_{\mathcal{D}}^{-\nu} - \Delta_{\mathcal{D}}^{-\rho}\|_{\mathcal{L}_1(H)} \lesssim \sum_{j=1}^\infty |j^{-2\nu} - j^{-2\rho}| \lesssim |\nu - \rho| \sum_{j=1}^\infty (\log j) j^{-2 \nu_{\min}}.
	$$
	Since $\nu_{\min} > \frac{d}{2}$, the sum on the right converges, yielding continuity of the map $\theta = \nu \mapsto \Delta_{\mathcal{D}}^{-\nu} = Q_\theta$ in the trace-norm topology.
	
	Together with the compactness of 
	$\Theta$, this guarantees the existence of a minimizer in formula \eqref{eq: test statistic in the estimated parameter case} by the Weierstra{\ss}' extreme value theorem.	
\end{example}

The finite-sample performance of both tests is analyzed in a simulation study for the Mat{\'e}rn and inverse fractional Laplacian covariance classes   in the next section.

\section{Simulation Study}\label{Sec: Simulation Study}

In this section, we illustrate our results with numerical simulations. For computational cost reasons, we only consider the case that $d = 1$ with $\cD = (0,1)$. We consider a fixed observation window $[0,T]$ with $T = 1$ and set the initial value $X_0 = 0$. We consider two different choices for $A$ and $q$ along with two different methods to simulate $X$.

\subsection{Root mean square errors}
First, we illustrate the results of Theorem~\ref{thm:RV-convergence-corollary}, starting with the case that $q$ is a Mat\'ern covariance kernel (see Example~\ref{ex:stationary-kernels}). We set $A = 1-\tfrac{1}{20}\Delta_\cD$, with  zero Neumann boundary conditions. In this case, $A$ and the operator $Q$ with kernel $q$ do not commute.
\begin{figure}[h]
	\centering
	\begin{minipage}{0.48\textwidth}
		\centering
		\includegraphics[width=\textwidth]{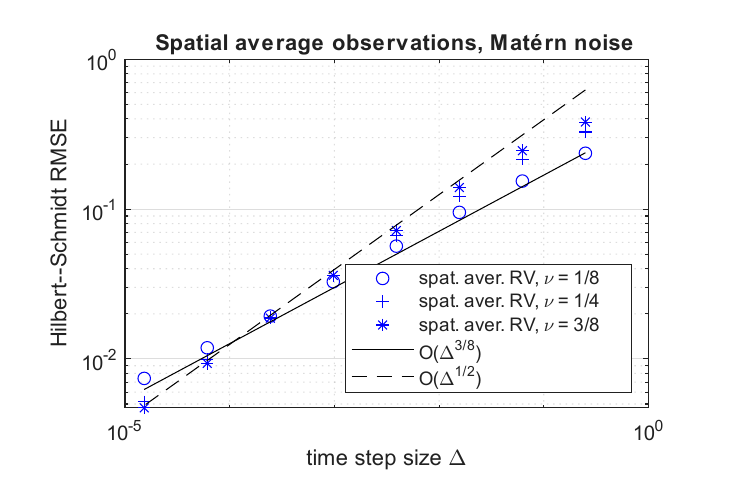}
	\end{minipage}\hfill
	\begin{minipage}{0.48\textwidth}
		\centering
		\includegraphics[width=\textwidth]{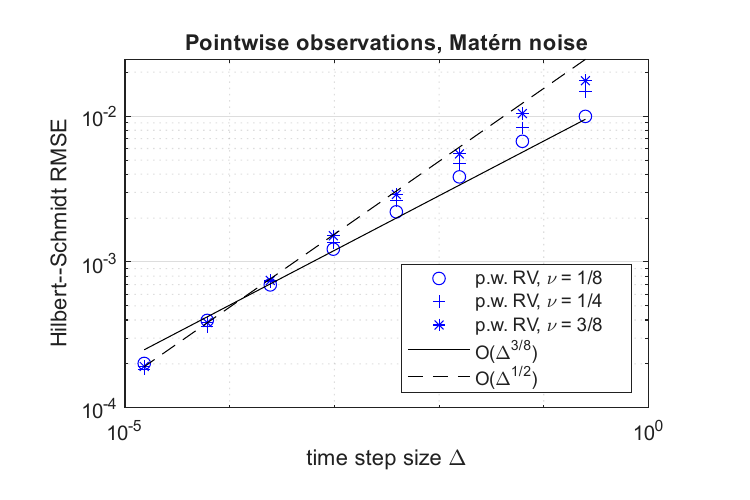}
	\end{minipage}
	\caption{Approximation of the error of Theorem~\ref{thm:RV-convergence-corollary}, with $h = \sqrt{\Delta}$ and $q$ a Mat\'ern covariance kernel.}
	\label{fig:matern}
\end{figure}
We first consider sampling of local averages, i.e., $O = P_h$. As discussed in Example~\ref{ex:stationary-kernels}, for the Mat\'ern kernel and our choice of operator $A$, Assumption~\ref{ass:Ph} is satisfied with regularity parameter $\gamma < \min(3/2, 2\nu+1/2)$. Since we set $X_0 = 0$ and couple the resolutions by $h = \sqrt{\Delta}$, Theorem~\ref{thm:RV-convergence-corollary}(B) predicts an RMSE of order $\cO(\Delta^{\min(\gamma,1)/2})$. For a Mat\'ern smoothness parameter $\nu > 1/4$, we have $\gamma > 1$, which yields an optimal rate of $\cO(\Delta^{1/2})$. For $\nu \le 1/4$, the rate is $\cO(\Delta^{\gamma/2})$ for any $\gamma < 2\nu+1/2$, so we expect a rate approaching $\cO(\Delta^{\nu+1/4})$. These theoretical rates are consistent with the numerical simulation results shown in Figure~\ref{fig:matern} (left).

Next, we consider pointwise sampling ($O=I_h$). Following the discussion in Example~\ref{ex:stationary-kernels}, we see that we can satisfy the conditions of Assumption~\ref{ass:Ih} with parameters $\gamma$ and $\beta$ such that, under the coupling $h = \sqrt{\Delta}$, the predicted rate in Theorem~\ref{thm:RV-convergence-corollary} is the same as for local averages. This is again what we observe in Figure~\ref{fig:matern} (right).

In these simulations, the SPDE solution for the Matérn case has been replaced by an approximation based on the backward Euler method in time and piecewise linear finite elements in space, with maximal time step size $2^{-18}$ and spatial resolution corresponding to the observation resolution. The noise was interpolated and sampled using the circulant embedding method. We refer to \cite{LordPetersson25} for details on this SPDE approximation. In the error calculation, the true kernel $q$ was approximated by interpolating it on a fine grid with $2^9+1$ nodes. The error itself was approximated by a Monte Carlo method based on $120$ samples for each setting. The simulations were performed for smoothness parameters $\nu = 1/8, 1/4, 3/8$, and observation parameters $\Delta = 2^{-2}, 2^{-4}, \ldots, 2^{-16}$ with $h = \sqrt{\Delta}$.
\begin{figure}[h]
	\centering
	\begin{minipage}{0.48\textwidth}
		\centering
		\includegraphics[width=\textwidth]{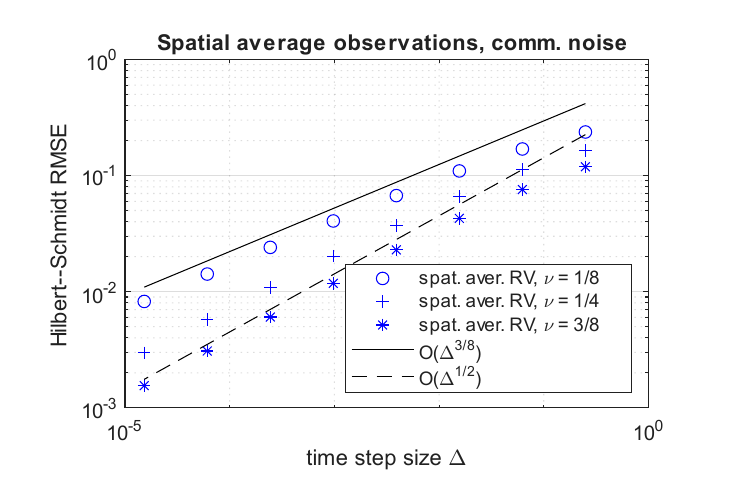}
	\end{minipage}\hfill
	\begin{minipage}{0.48\textwidth}
		\centering
		\includegraphics[width=\textwidth]{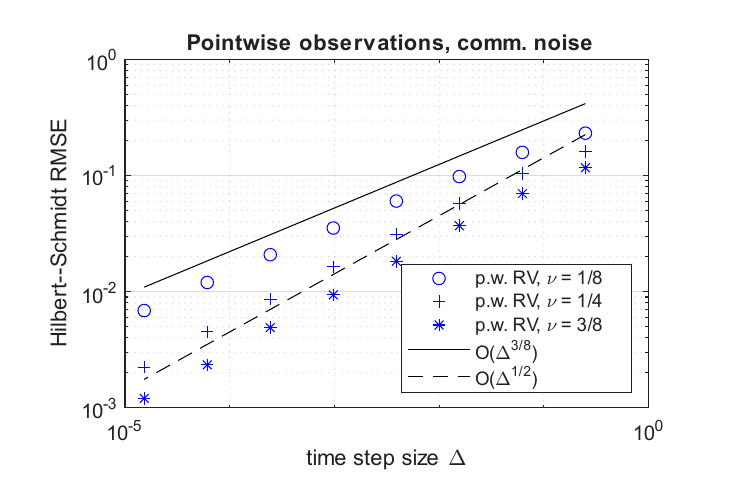}
	\end{minipage}
	\caption{Approximation of the error of Theorem~\ref{thm:RV-convergence-corollary}, with $h = \sqrt{\Delta}$ and $q$ defined in terms of the eigenbasis of $A$.}
	\label{fig:commutative}
\end{figure}
In our second example, we let $A$ be the negative Laplacian with zero Dirichlet boundary conditions and define $Q$ via its eigenfunctions $(e_j)_{j=1}^\infty$ and eigenvalues $(\lambda_j)_{j=1}^\infty$, which are shared with $A$. We set
\begin{equation}
	\label{eq:num-q-comm-def-corrected}
	q(x,y)=\sum_{j=1}^\infty \lambda_j^{-\nu-1/2} e_j(x) e_j(y) ,\quad x,y \in \cD,
\end{equation}
where $\nu > 0$ is a smoothness parameter. This is a specific instance of the kernels discussed in Example~\ref{ex:kernels-commuting-with-A}, with $Q=(-\Delta_{\cD})^{-(\nu+1/2)}$. Also here, Assumption~\ref{ass:rate}\ref{ass:rate:q} is satisfied for $\gamma <  2\nu+1/2$. Consequently, Theorem~\ref{thm:RV-convergence-corollary} predicts the same convergence rates as in the Matérn case.
Since $A$ and $Q$ commute, the SPDE solution can be simulated directly from its spectral expansion, which we truncate at $N=2^9$. We choose observation parameters $\Delta$ and $h$, smoothness parameters $\nu$, and the number of Monte Carlo samples as before. The results are plotted in Figure~\ref{fig:commutative}. Again, the observed errors display the expected rates of convergence for both local average and pointwise sampling.

\subsection{Hypothesis tests}
To demonstrate the performance of the goodness-of-fit tests developed in Section \ref{Sec: Universal Goodness of fit test}, we conducted a series of numerical simulations. We analyze the tests for both a fixed null hypothesis (Theorem~\ref{thm: Universal Goodness of fit}) and a parametric family of hypotheses (Theorem~\ref{thm: Universal Goodness of fit- estimated parameters}). All experiments are performed under the pointwise observation scheme $O = I_h$.

The simulation setup is similar to that of the previous subsection. We consider the Mat\'ern framework from Example~\ref{ex:stationary-kernels} and the commutative framework from Example~\ref{ex:kernels-commuting-with-A}. For the Mat\'ern case, the true solution $X$ is simulated with high resolutions $\Delta = 2^{-16}$ and $h = 2^{-8}$, while for the commutative case, we use $N=300$ eigenpairs. The test statistics are implemented according to Algorithms~\ref{Alg: Pseudocode for fixed Q GoF} and~\ref{Alg: Pseudocode for parametric Q GoF- estimated paramaters}, but with the Hilbert-Schmidt norm and the null kernel approximated on a fine spatial grid with mesh size $h=2^{-8}$. All rejection rates are estimated using $10^4$ independent Monte Carlo samples at a significance level of $\alpha=0.05$.

\begin{figure}[h]
	\centering
	\includegraphics[width=1.0\linewidth]{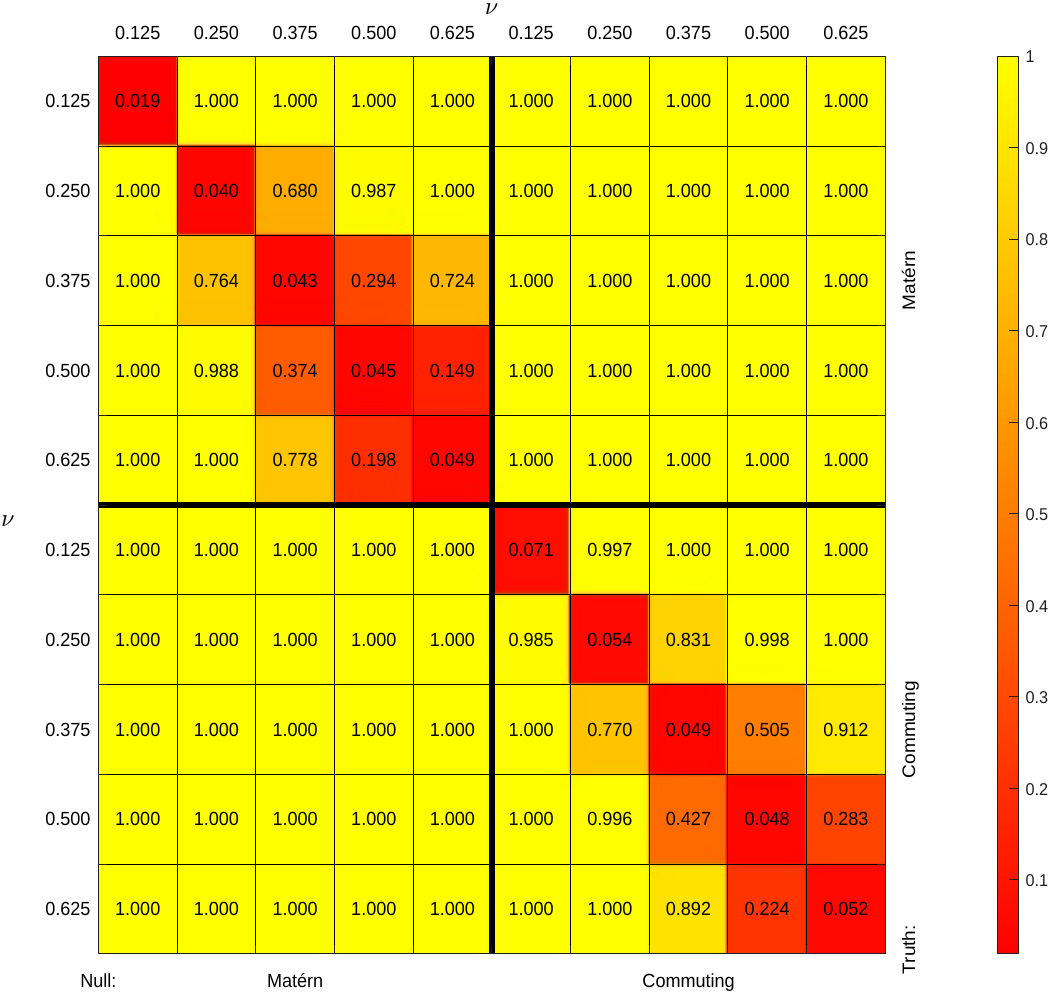}
	\caption{Rejection rates for the test of Theorem~\ref{thm: Universal Goodness of fit} ($\alpha = 0.05$), under $O = I_h$ with $\Delta = 2^{-8}, h = 2^{-4}$.}
	\label{fig:rejection_rates_coarse}
\end{figure}

First, we consider the test for a fixed null hypothesis. We test each of the ten kernels (two types, five smoothness parameters $\nu$) against every other. This setup serves to examine the test's ability to control its size (diagonal entries) and its power to distinguish between different covariance structures (off-diagonal entries). In an initial experiment, we compute the realized variation estimator using observation parameters $\Delta = 2^{-8}$ and $h = 2^{-4}$. The results are shown in Figure~\ref{fig:rejection_rates_coarse}. For smoothness parameters $\nu$ that satisfy the condition $\gamma \ge 1$ of Theorem~\ref{thm: CLT} (in our one-dimensional setting, this corresponds to $\nu > 1/4$), the rejection rates on the diagonal are close to the nominal level of $0.05$. The off-diagonal rates are high, indicating that the test has good power.

To further investigate the behavior of the test when the regularity conditions are not met, we conduct a second experiment with more observations, setting $\Delta = 2^{-12}$ and $h=2^{-6}$. As seen in the left panel of Figure~\ref{fig:rejection_rates_fine_and_parametric}, the empirical sizes for the low smoothness values $\nu=1/8$ and $\nu=1/4$ diverge from $0.05$. This is expected, as these choices violate the assumptions of Theorem~\ref{thm: CLT}, which illustrates the necessity of the regularity conditions for the test's validity.	

\begin{figure}[h]
	\centering
	\begin{minipage}{0.55\textwidth}
		\centering
		\includegraphics[width=\textwidth]{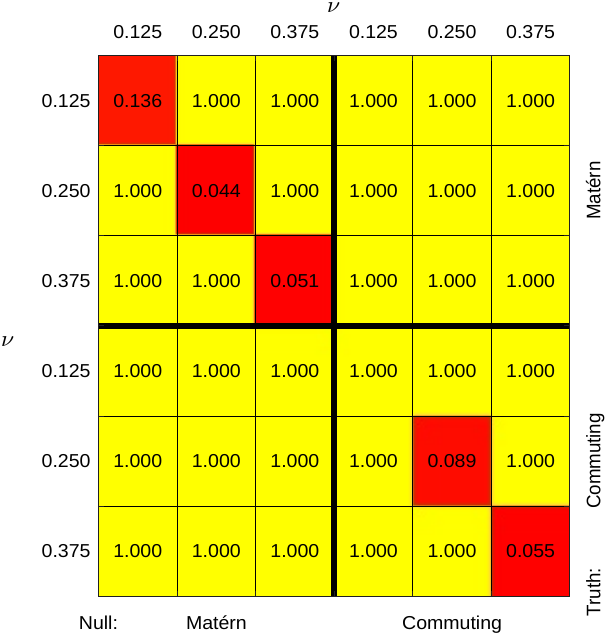}
	\end{minipage}\hfill
	\begin{minipage}{0.4\textwidth}
		\vspace{2.0em}
		\centering
		\includegraphics[width=\textwidth]{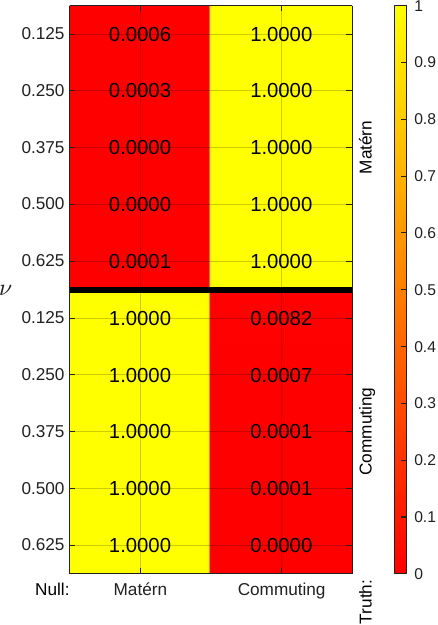}
	\end{minipage}
	\caption{Left: Rejection rates for the goodness-of-fit test of Theorem~\ref{thm: Universal Goodness of fit} ($\alpha = 0.05$) under $O = I_h$ with $\Delta = 2^{-12}, h = 2^{-6}$. Right: Rejection rates for the goodness-of-fit test of Theorem~\ref{thm: Universal Goodness of fit- estimated parameters} ($\alpha=0.05$) with $\Delta = 2^{-8}, h = 2^{-4}$.}
	\label{fig:rejection_rates_fine_and_parametric}
\end{figure}

Next, we study the test for a parametric family of covariances, considering both the Mat\'ern and the commutative kernel families as the null hypothesis. The realized variation is computed from observations with $\Delta = 2^{-8}$ and $h = 2^{-4}$. The results, shown in the right panel of Figure~\ref{fig:rejection_rates_fine_and_parametric}, demonstrate the test's effectiveness in distinguishing between these two qualitatively different kernel families. When the true kernel is from the Mat\'ern family, the test for a Mat\'ern null correctly fails to reject, with rejection rates close to zero, consistent with the conservative nature of the test. Conversely, when the data is generated from a commutative kernel, the same test consistently rejects the null hypothesis with a power close to one. The results are analogous when the roles of the kernel families are reversed.

\bibliographystyle{abbrv}
\bibliography{Bibliography}

\begin{appendix}
	
	\section{Pointwise evaluation of the SPDE}\label{Sec: Pointwise Sampling}	
	
	This section provides the technical framework for the pointwise sampling scheme presented in Section~\ref{Sec: Sampling setting}, which requires the solution $X_t$ of the SPDE \eqref{eq:spde} to be well-defined at specific points $x \in \cD$. Recall that the solution is given by the mild formulation \eqref{eq:mild}, $X_t = S(t)X_0 + Y_t$, where $Y_t := \int_0^t S(t-s) \dd W_s$ is the stochastic convolution.
	
	For the point evaluation $X_t(x)$ to be meaningful, $X_t$ must possess sufficient spatial regularity. By the Sobolev embedding theorem, this is the case if $X_t$ takes values in a Sobolev space $H^r$ for some $r > d/2$. For such $r$, $H^r$ is a reproducing kernel Hilbert space (see \cite[p.\ 133 and Corollary~10.48]{Wendland04}), which implies that the evaluation functional $\delta_x: u \mapsto u(x)$ is continuous. 
	
	For $Y_t$ to be a square-integrable $\dot{H}^r$-valued random variable, strong conditions on $Q$ (i.e., $\| A^{(r-1)/2}Q^{1/2} \|_{\cL_2(H)} < \infty$) are required.  Our framework relies on the weaker notion of cylindrical Gaussian random variables. This approach, for which we refer to \cite{Riedle11} for an introduction, is sufficient under the operator norm conditions of Assumption~\ref{ass:Ih}.
	
	\begin{proposition}
		\label{prop:cylindrical-pointwise}
		Suppose that $\| A^{(r-1)/2}Q^{1/2} \|_{\cL(H)} < \infty$ for some $r\ge 1$, and that $\Gamma \in \cL_2(\dot{H}^r,U)$ for some Hilbert space $U$. Then $Y_t$ defines a cylindrical random variable $Y_t$ in $\dot{H}^r$ such that the cylindrical random variable $\Gamma Y_t$ in $U$ is induced by $$\int^t_0 \Gamma S(t-s) \dd W(s).$$
	\end{proposition}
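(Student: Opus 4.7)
The approach is to construct $Y_t$ directly as a centered cylindrical Gaussian random variable on $\dot{H}^r$ in the sense of Riedle, and then identify the image $\Gamma Y_t$ with the given stochastic integral. Under only the operator-norm assumption on $A^{(r-1)/2} Q^{1/2}$, $Y_t$ is not a proper $\dot{H}^r$-valued random variable, so the definition must be made through its scalar components on test elements of $\dot{H}^r$.

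First, for each $v \in \dot{H}^r$ I would define the scalar Wiener integral
\[
    Y_t(v) := \int_0^t \langle S(t-s) v, \dd W(s) \rangle_H,
\]
which is classically defined for a $Q$-Wiener process, with variance $\int_0^t \|Q^{1/2} S(t-s) v\|_H^2\, \dd s$. To control this, factor $Q^{1/2} S(t-s) v = (Q^{1/2} A^{(r-1)/2})(A^{-(r-1)/2} S(t-s) v)$, bound the first factor by $\|A^{(r-1)/2} Q^{1/2}\|_{\cL(H)}$ via self-adjointness of both $Q^{1/2}$ and $A^{(r-1)/2}$, and bound the second uniformly in $s$ by writing $v = A^{-r/2}(A^{r/2} v)$, which gives
\[
    \|A^{-(r-1)/2} S(t-s) v\|_H = \|A^{-(2r-1)/2} S(t-s)(A^{r/2} v)\|_H \le C \|v\|_{\dot{H}^r},
\]
since $A^{-(2r-1)/2}$ is bounded on $H$ for $r \ge 1$ and $S$ is a uniformly bounded $C_0$-semigroup on $[0,t]$. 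Consequently $\mathrm{Var}(Y_t(v)) \lesssim t\, \|v\|_{\dot{H}^r}^2$, so $v \mapsto Y_t(v)$ is a continuous linear map $\dot{H}^r \to L^2(\Omega)$, which defines $Y_t$ as a centered cylindrical Gaussian in $\dot{H}^r$.

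Next, for $\Gamma \in \cL_2(\dot{H}^r, U)$, the cylindrical image in $U$ is specified by $(\Gamma Y_t)(u) := Y_t(\Gamma^* u)$ for $u \in U$. The Hilbert--Schmidt property of $\Gamma$ makes the resulting covariance $\Gamma C_t \Gamma^*$ trace class on $U$, where $C_t$ is the covariance operator of $Y_t$ on $\dot{H}^r$, so $\Gamma Y_t$ is in fact a proper $U$-valued Gaussian. To identify it with the stochastic integral, I would observe that for each $u \in U$,
\[
    (\Gamma Y_t)(u) = Y_t(\Gamma^* u) = \int_0^t \langle S(t-s) \Gamma^* u, \dd W(s) \rangle_H,
\]
which is precisely the $u$-component of the cylindrical interpretation of $\int_0^t \Gamma S(t-s) \dd W(s)$, giving the claimed identification.

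The main obstacle will be this identification step: under only the operator-norm assumption on $A^{(r-1)/2}Q^{1/2}$, the Hilbert--Schmidt integrability $\int_0^t \|\Gamma S(t-s) Q^{1/2}\|_{\cL_2(H,U)}^2\, \dd s < \infty$ generally fails, since the analytic-semigroup smoothing bound $(t-s)^{-1/2}$ combined with the operator bound on $A^{(r-1)/2}Q^{1/2}$ produces a non-integrable singularity. Consequently $\int_0^t \Gamma S(t-s) \dd W(s)$ on the right-hand side of the proposition must be interpreted within Riedle's cylindrical framework rather than as a standard $U$-valued Itô integral, with $\Gamma Y_t$ becoming a proper random variable only through the Hilbert--Schmidt character of $\Gamma$ on $\dot{H}^r$.
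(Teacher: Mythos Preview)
Your construction of the cylindrical random variable $Y_t$ via the scalar Wiener integrals $Y_t(v)=\int_0^t\langle S(t-s)v,\dd W(s)\rangle_H$ is correct, and in fact your uniform-in-$s$ bound $\|A^{-(r-1)/2}S(t-s)v\|_H\lesssim\|v\|_{\dot H^r}$ is a slightly simpler route than the paper's, which expands $Y_t v$ in the eigenbasis of $A$ and uses the integral estimate~\eqref{eq:semigroup-integral} instead. Your identification $(\Gamma Y_t)(u)=Y_t(\Gamma^*u)$ and the observation that $\Gamma C_t\Gamma^*$ is trace class are also fine.

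The genuine gap is in your final paragraph: your claim that
\[
\int_0^t \|\Gamma S(t-s)Q^{1/2}\|_{\cL_2(H,U)}^2\,\dd s
\]
``generally fails'' to be finite is wrong, and this is precisely the point of the proposition. Writing $\Gamma S(s)Q^{1/2}=\big(Q^{1/2}A^{(r-1)/2}\big)^*\,A^{1/2}S(s)\,(\Gamma A^{-r/2})^*$ (after taking adjoints) and applying the analytic-semigroup integral bound $\int_0^t\|A^{1/2}S(s)B\|_{\cL_2}^2\,\dd s\lesssim\|B\|_{\cL_2}^2$ (which is~\eqref{eq:semigroup-integral} with $r=1$, or equivalently the elementary identity $\int_0^t\lambda_k e^{-2\lambda_k s}\,\dd s\le\tfrac12$), one obtains
\[
\int_0^t \|\Gamma S(s)Q^{1/2}\|_{\cL_2(H,U)}^2\,\dd s
\lesssim \|(\Gamma A^{-r/2})^*\|_{\cL_2(U,H)}^2
=\|\Gamma\|_{\cL_2(\dot H^r,U)}^2<\infty.
\]
The pointwise bound $\|A^{1/2}S(s)\|_{\cL(H)}\lesssim s^{-1/2}$ is indeed not square-integrable, but the \emph{integrated} Hilbert--Schmidt norm is controlled because the singularity is tempered eigenvalue-by-eigenvalue. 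This is exactly what the paper exploits: the $U$-valued It\^o integral $\int_0^t\Gamma S(t-s)\,\dd W(s)$ is a genuine $U$-valued random variable, and the proposition's statement that $\Gamma Y_t$ is ``induced by'' it means just that. Your argument as written stops short of this and reinterprets the right-hand side cylindrically, which is both unnecessary and not what is asserted.
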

	
	Since $\Gamma = \delta_x$ has finite rank and is continuous, it satisfies $\delta_x \in \cL_2(H^r, \R)$ when $r > d/2$. Under Assumption~\ref{ass:Ih}, the conditions of Proposition~\ref{prop:cylindrical-pointwise} are met and we obtain the following corollary.
	
	\begin{corollary}
		\label{cor:pointwise-well-def}
		Under Assumption~\ref{ass:Ih},
		$$Y_t(x) := \int^t_0 \delta_x S(t-s) \dd W(s)$$
		is a well-defined real-valued random variable. If also $\iota > d/2$ in Assumption~\ref{ass:rate}\ref{ass:rate:initial}, then pointwise values $X_t(x) := (S(t)X_0)(x) + Y_t(x)$ are well-defined, and for any $p \ge 1$,
		$$ \sup_{t \ge 0, x \in \cD} \E[| X_t(x) |^p] < \infty. $$
	\end{corollary}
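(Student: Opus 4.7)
The plan is to apply Proposition~\ref{prop:cylindrical-pointwise} with $U=\R$ and $\Gamma=\delta_x$ in order to identify $Y_t(x)$ with a genuine centred Gaussian real-valued random variable, and then to bound its variance uniformly in $(t,x)$ via the It\^o isometry combined with analytic semigroup smoothing. The initial-condition contribution $(S(t)X_0)(x)$ is handled pathwise through Sobolev embedding and the moment bound on $\|A^{\iota/2}X_0\|$ from Assumption~\ref{ass:rate}\ref{ass:rate:initial}. Since $Y_t(x)$ is Gaussian, the variance bound automatically upgrades to bounds on $\E[|Y_t(x)|^p]$ for every $p\ge 1$, and Minkowski's inequality then assembles the final estimate $\sup_{t,x}\E[|X_t(x)|^p]<\infty$.

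To set up Proposition~\ref{prop:cylindrical-pointwise}, I would fix an exponent $r\ge 1$ satisfying $r>d/2$, $r\le 2$, and $r<2a+1$, where $a=(\beta+d-1+\epsilon)/4$. Such an $r$ exists for every $d\in\{1,2,3\}$ and every admissible $\beta\ge 0$, $\epsilon>0$ (e.g.\ $r=1$ for $d=1$, any $r$ slightly above $1$ for $d=2$, and $r=2$ for $d=3$). Complex interpolation between the identity $\dot H^0=H\hookrightarrow H^0$ and Assumption~\ref{ass:Ih}\ref{ass:Ih:ell-reg} yields $\dot H^r\hookrightarrow H^r$, and Sobolev embedding supplies $H^r\hookrightarrow C(\bar{\cD})$ because $r>d/2$. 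Hence $\delta_x\in(\dot H^r)^*=\cL_2(\dot H^r,\R)$ with operator norm uniformly bounded in $x\in\cD$. Since $(r-1)/2\le 2a$, Assumption~\ref{ass:Ih}\ref{ass:Ih:q:1} together with boundedness of non-positive powers of $A$ gives $\|A^{(r-1)/2}Q^{1/2}\|_{\cL(H)}<\infty$. Proposition~\ref{prop:cylindrical-pointwise} then realises $Y_t(x)$ as the well-defined real-valued It\^o integral $\int_0^t\delta_x S(t-s)\,\dd W(s)$.

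For the variance bound, the It\^o isometry yields
\begin{equation*}
\mathrm{Var}(Y_t(x))=\int_0^t\|\delta_x S(t-s)Q^{1/2}\|_{\cL_2(H,\R)}^2\,\dd s\lesssim\int_0^t\|A^{r/2}S(t-s)Q^{1/2}\|_{\cL(H)}^2\,\dd s,
\end{equation*}
and the factorisation $A^{r/2}S(u)Q^{1/2}=A^{r/2-a}S(u)\cdot A^{a}Q^{1/2}$ combined with the analytic semigroup estimate $\|A^\theta S(u)\|_{\cL(H)}\lesssim u^{-\max(\theta,0)}e^{-\lambda_1 u/2}$ and the strict inequality $r/2-a<1/2$ renders the integral finite uniformly in $t\ge 0$. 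For the initial condition I would pick $r'\in(d/2,\min(\iota,2)]$ (non-empty because $\iota>d/2$) and estimate $|(S(t)X_0)(x)|\lesssim\|S(t)X_0\|_{\dot H^{r'}}=\|S(t)A^{r'/2}X_0\|\le e^{-\lambda_1 t}\|A^{r'/2}X_0\|\lesssim e^{-\lambda_1 t}\|A^{\iota/2}X_0\|$, which is in $L^4(\Omega)$ by Assumption~\ref{ass:rate}\ref{ass:rate:initial}.

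The main obstacle is verifying that the window of admissible $r$ is non-empty in every spatial dimension: the positivity of $\epsilon$ in Assumption~\ref{ass:Ih}\ref{ass:Ih:q:1} is precisely what opens the strict gap $r/2-a<1/2$ required for integrability at $u=0^+$ in the variance calculation, while simultaneously the upper bound $r\le 2$ coming from Assumption~\ref{ass:Ih}\ref{ass:Ih:ell-reg} must accommodate the Sobolev threshold $r>d/2$, which is most restrictive in the borderline case $d=3$. A secondary technical subtlety is that Assumption~\ref{ass:rate}\ref{ass:rate:initial} only supplies an $L^4$ bound on $\|A^{\iota/2}X_0\|$, so the claim for $p>4$ is to be read as requiring the natural $L^p$-strengthening of that assumption, or absorbing the $X_0$-contribution into the Gaussian term when $X_0$ is deterministic.
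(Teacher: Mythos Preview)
Your proof is correct and follows essentially the same route as the paper: both invoke Proposition~\ref{prop:cylindrical-pointwise} with the rank-one operator $\Gamma=\delta_x$, use the uniform boundedness of the reproducing kernel of $H^r$ ($r>d/2$) for the $x$-uniformity, and control the time integral via analytic-semigroup smoothing. The paper only sketches this (mentioning the Burkholder--Davis--Gundy inequality where you use Gaussianity directly), whereas you spell out the choice of the exponent $r$ and verify the window $\{r\ge 1,\ r>d/2,\ r\le 2,\ r<2a+1\}$ is non-empty in each dimension; your observation that the ``for any $p\ge 1$'' clause tacitly requires the $L^p$-version of Assumption~\ref{ass:rate}\ref{ass:rate:initial} (or deterministic $X_0$) when $p>4$ is a fair reading of the statement.
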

	
	The uniformity in $x$ follows from the fact that the kernel of $H^r, r > d/2$ is in turn uniformly bounded. The bound in time is then straightforward to deduce using standard estimates for analytic semigroups (see \eqref{eq:semigroup-time-bound}-\eqref{eq:semigroup-integral}) and the Burkholder-Davis-Gundy inequality.
	
	\section{Proofs}\label{Sec. Proofs}
	This section contains the formal proofs Sections   \ref{Sec: Sampling setting} and \ref{Sec: Asymptotic Theory}. In these proofs, we will make frequent use of the properties of $S$, extended to an analytic semigroup on $\dot{H}^r$ for all $r \in \R$.  Specifically, we need the following estimates, which hold for some constant $C<\infty$, and where $E(t) := I - S(t)$,
	\begin{equation}
		\label{eq:semigroup-time-bound}
		\| E(t)^{\frac s 2} A^{-\frac{r}{2}} v \| \le C t^{r/2} \| v \|_{H}, \quad s \in \{1,2\}, r \in [0,s], v \in H, t \ge 0,
	\end{equation}	
	\begin{equation}
		\label{eq:semigroup-analyticity}
		\| A^\frac{r}{2} S(t) v \| \le C t^{-r/2} \| v \|, \quad r \ge 0, v \in H, t > 0,
	\end{equation}
	\begin{equation}
		\label{eq:semigroup-exp-decay}
		\| S(t) v \| \le e^{-\lambda_1 t} \| v \|, \quad v \in H, t \ge 0, \text{ and }
	\end{equation}
	\begin{equation}
		\label{eq:semigroup-integral}
		\int^t_s \| A^\frac{r}{2} S(\tau) v \|^2 \dd \tau \le C (t-s)^{(1-r)} \| v \|^2 \quad r \in [0,1], v \in H, t \ge s \ge 0.
	\end{equation}
	For a proof of these results, as well as a rigorous introduction to the semigroup setting we consider, we refer to \cite[Appendix~B]{Kruse2014}.
	The last property immediately yields that for any additional Hilbert space $U$ and operator $\Gamma \in \cL_2(U,H)$, 
	\begin{equation}
		\label{eq:semigroup-hs}
		\int^t_s \| A^\frac{r}{2} S(\tau) \Gamma \|_{\cL_2(U,H)}^2 \dd \tau \le C (t-s)^{(1-r)} \| \Gamma \|_{\cL_2(U,H)}^2 \quad r \in [0,1], t \ge s \ge 0.
	\end{equation}
	
	Throughout the proofs, we make use of the fact that $A$ and $S$ commute, usually without saying so explicitly.

	For brevity, we also write
	\begin{align*}
		& \tilde \Delta W_O^i := \int^{i\Delta}_{(i-1)\Delta} OS(i\Delta - r) \dd W(r), \\
		&E^{i,j}_{k,O} := \int^{i\Delta}_{j\Delta} O E(\Delta) S((i+k)\Delta-r) \dd W(r) \text{ and }\\
		&E^{i,j}_{O} := E^{i,j}_{0,O},
	\end{align*}
	for $i,j = 1, \ldots, \ulT$.  The next lemma establishes bounds for second moments of these increments.
	\begin{lemma}\label{lem: second moments of increments} Let Assumption \ref{ass: minimal Assumption on A} hold. In this case, we can find a constant $C$ as well as a sequence $\zeta_{\Delta}$ both independent of $\Delta$ and $T$ such that  
		\begin{align}
			&\sup_{i=1,...,\ulT}\| \tilde \Delta W_I^i\|_{L^2(\Omega,H)}^2 \leq  C \Delta,\label{eq: bound on adjusted increment}\\
			&\sup_{j=1,...,\ulT} \| E^{j-1,0}_{I}\|_{L^2(\Omega,H)}^2 \leq C \Delta \zeta_{\Delta},\label{eq: bound on adjusted increment remainder}\\
			& \sum^{\ulT-1}_{j=1} \sum^{\ulT}_{i = j+1} \|E^{j,j-1}_{i-j,I}\|^2_{L^2(\Omega,H)} \leq T C  \zeta_{\Delta} \quad\text{and }\label{eq: bound on mixed term sum}\\
			&	\sum^{\ulT-1}_{j=1} \sum^{\ulT}_{i = j+1} \|E^{j-1,0}_{i-j,I}\|^2_{L^2(\Omega,H)} 
			\leq  T C .\label{eq: mixed sum bound II} 
		\end{align}
		Furthermore, if Assumption~\ref{ass:Ih} also holds, we find that 
		\begin{align}
			&\sup_{i=1,...,\ulT}\| \tilde \Delta W_{I_h}^i\|_{L^2(\Omega,H)}^2 \leq  C \Delta^{\min(1,(1+\beta)/2)},\label{eq: bound on adjusted increment-I_h case}\\
			&\sup_{j=1,...,\ulT} \| E^{j-1,0}_{I_h}\|_{L^2(\Omega,H)}^2 \leq C \Delta^{\min(2,(1+\beta+\epsilon/2)/2)},\label{eq: bound on adjusted increment remainder-I_h case}\\
			&	\sum^{\ulT-1}_{j=1} \sum^{\ulT}_{i = j+1} \|E^{j-1,0}_{i-j,I_h}\|^2_{L^2(\Omega,H)} 
			\leq   T C \Delta^{\frac{\min(0,\beta-1)}{2}}\quad\text{and }\label{eq: mixed sum bound I-I_h case} \\
			&	\sum^{\ulT-1}_{j=1} \sum^{\ulT}_{i = j+1} \|E^{j,j-1}_{i-j,I_h}\|^2_{L^2(\Omega,H)} 
			\leq  T C \Delta^{\frac{\min(1,\beta-1)+\epsilon/2}{2}}. \label{eq: mixed sum bound II-I_h case} 
		\end{align}
	\end{lemma}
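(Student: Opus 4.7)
The strategy for all eight bounds is to apply the (cylindrical) Itô isometry, provided for the $O = I_h$ case by Proposition~\ref{prop:cylindrical-pointwise}, which reduces the second moment of each increment to a deterministic integral
\[
\int_a^b \|\Phi(r)Q^{1/2}\|^2_{\cL_2(H,H)}\,\dd r,
\]
where $\Phi(r)$ is a composition of $O$, $E(\Delta)$ and $S(\cdot)$. The remaining work is to estimate this integral (and, in the double-sum cases, the resulting sum) in terms of $\Delta$ and $T$.

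For the four bounds in the $O = I$ case, I would exploit the joint spectral representation of $A$, $S(t)$ and $E(\Delta)$, writing $Q = \sum_k \mu_k e_k \otimes e_k$ (with $\sum_k \mu_k = \tr Q < \infty$) to obtain explicit series expressions. Evaluating the $r$-integral and, for the double sums \eqref{eq: bound on mixed term sum} and \eqref{eq: mixed sum bound II}, the geometric $i$-sum $\sum_{i=j+1}^{\ulT} e^{-2\lambda_k(i-j)\Delta} \le (1-e^{-2\lambda_k \Delta})^{-1}$, the three bounds \eqref{eq: bound on adjusted increment remainder}, \eqref{eq: bound on mixed term sum}, \eqref{eq: mixed sum bound II-I_h case} all reduce to the auxiliary quantity
\[
\zeta_\Delta := \sum_k \mu_k \min(\lambda_k\Delta, (\lambda_k\Delta)^{-1}),
\]
which is dominated by $\tr Q$ and tends to $0$ as $\Delta\to 0$ by dominated convergence. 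Bound \eqref{eq: bound on adjusted increment} follows directly from \eqref{eq:semigroup-hs} with $r=0$, while \eqref{eq: mixed sum bound II} uses the cancellation $(1-e^{-\lambda_k\Delta})^2/(1-e^{-2\lambda_k\Delta})\le 1$ to replace $\zeta_\Delta$ by a dimensionless constant, leaving only the $\ulT \le T/\Delta$ contribution of the outer sum.

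For the $O = I_h$ bounds, the spectral approach is unavailable because $I_h$ does not commute with $A$. Instead I would invoke Lemma~\ref{lem:obs-hs} together with the embedding $\dot H^{r_0} \hookrightarrow H^{r_0}$ from Assumption~\ref{ass:Ih}\ref{ass:Ih:ell-reg} to bound
\[
\|I_h \Phi(r) Q^{1/2}\|_{\cL_2(H,H)} \lesssim \|A^{r_0/2}\Phi(r)Q^{1/2}\|_{\cL(H)}
\]
for a suitable $r_0>d/2$. Factoring as $\|A^{r_0/2 - (\beta+d-1+\epsilon)/4}\Phi(r)\|_{\cL(H)} \cdot \|A^{(\beta+d-1+\epsilon)/4}Q^{1/2}\|_{\cL(H)}$, the second factor is controlled by Assumption~\ref{ass:Ih}\ref{ass:Ih:q:1}. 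Inside the first, \eqref{eq:semigroup-time-bound} applied to $E(\Delta)$ trades a factor $\Delta^{s/2}$ for an extra $A^{s/2}$ ($s \in [0,2]$), and the remaining power of $A$ is absorbed into the analyticity estimate \eqref{eq:semigroup-analyticity} for $S$. This yields an integrand of the form $C\Delta^{s}\,t^{-(r_0+s-(\beta+d-1+\epsilon)/2)_+}$; the resulting $t$-integrals, combined with the exponential decay \eqref{eq:semigroup-exp-decay} to sum over $i$ in the double-sum bounds and the range bound $\ulT \le T/\Delta$, produce the claimed exponents.

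The main obstacle is the parameter book-keeping in the $I_h$ case. With only $A^{(\beta+d-1+\epsilon)/4}$ of smoothness available from Assumption~\ref{ass:Ih}\ref{ass:Ih:q:1} but at least $A^{r_0/2}$ with $r_0 > d/2$ required to absorb the interpolant, the leftover power of $A$ must be balanced against the cost $\Delta^{s/2}$ of converting $E(\Delta)$ into smoothness and the singular weight $t^{-a}$ coming from analyticity. The cap $s \le 2$ in \eqref{eq:semigroup-time-bound} is the source of the $\min(2,\cdot)$ in \eqref{eq: bound on adjusted increment remainder-I_h case}; the requirement that the $t$-integrand remain integrable at the endpoint $t=0$ forces the $(1+\beta)/2$-type exponents in \eqref{eq: bound on adjusted increment-I_h case} and \eqref{eq: mixed sum bound II-I_h case}; and the residual $\epsilon/2$ terms in \eqref{eq: bound on adjusted increment remainder-I_h case} and \eqref{eq: mixed sum bound II-I_h case} trace back to the $\epsilon>0$ appearing in Assumption~\ref{ass:Ih}\ref{ass:Ih:q:1} (after choosing $r_0 = d/2+\epsilon'$ with $\epsilon'$ suitably small relative to $\epsilon$).
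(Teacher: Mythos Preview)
Your overall strategy is right, and for the $O=I$ bounds your direct spectral computation is a clean alternative to the paper's $P_N$-truncation (``$\epsilon/2$--$\epsilon/2$'') argument. There is, however, a slip: writing $Q=\sum_k\mu_k\,e_k\otimes e_k$ with $(e_k)$ the eigenbasis of $A$ presumes that $A$ and $Q$ commute, which is not part of Assumption~\ref{ass: minimal Assumption on A}. The fix is immediate: after It\^o isometry and taking adjoints, only the quantities $q_k:=\langle Qe_k,e_k\rangle=\|Q^{1/2}e_k\|^2$ appear, and these still satisfy $\sum_k q_k=\tr Q<\infty$. Your formula for $\zeta_\Delta$ then works verbatim with $\mu_k$ replaced by $q_k$, and with that correction your route is arguably more transparent than the paper's.

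The more serious issue is in the $O=I_h$ double sums \eqref{eq: mixed sum bound I-I_h case}--\eqref{eq: mixed sum bound II-I_h case}. Your factorization
\[
\|I_h\Phi(r)Q^{1/2}\|_{\cL_2}\;\lesssim\;\|I_h\|_{\cL_2(H^{r_0},H)}\,\|A^{r_0/2}\Phi(r)Q^{1/2}\|_{\cL(H)}
\]
discards the Hilbert--Schmidt structure on the semigroup side. To sum over $i$ you are then left with only the crude decay \eqref{eq:semigroup-exp-decay}, i.e.\ $\|S((i-j)\Delta)\|_{\cL(H)}\le e^{-\lambda_1(i-j)\Delta}$, so $\sum_i\lesssim\Delta^{-1}$; carrying this through yields an exponent one full power of $\Delta$ short of the claim in \eqref{eq: mixed sum bound II-I_h case}. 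The paper avoids this by the opposite factorization: it moves $I_h$ to the right via adjoints,
\[
\|I_h\Phi\,Q^{1/2}\|_{\cL_2}\;\lesssim\;\|A^{(\beta+d+\epsilon-1)/4}Q^{1/2}\|_{\cL(H)}\,\bigl\|\Phi'\,(I_hA^{-(d+\epsilon)/4})^*\bigr\|_{\cL_2},
\]
with $\Phi'$ a function of $A$ alone. This preserves the HS structure precisely where it is needed, namely for the mode-wise double-sum identity $\sum_j\sum_i\|E(\Delta)^{1/2}S((i-j)\Delta)\Gamma\|^2_{\cL_2}\lesssim\ulT\|\Gamma\|^2_{\cL_2}$ and for the sharp integral bound \eqref{eq:semigroup-hs} at $r=1$, neither of which has an operator-norm analogue. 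Your approach does recover the single-increment bounds \eqref{eq: bound on adjusted increment-I_h case}--\eqref{eq: bound on adjusted increment remainder-I_h case} with suitable parameter choices, but for the double sums the adjoint trick is essential.
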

	\begin{proof}
		The bound for $\| \tilde \Delta W_O^i\|_{L^2(\Omega,H)}^2$  follows from It{\^o}'s isometry and the boundedness of the semigroup in the operator norm.  
		
		For the derivation of \eqref{eq: bound on adjusted increment remainder} observe that  when $P_N= \sum_{i=1}^{N} e_i^{\otimes 2}$ is the projection onto the first $N$ eigenvalues $e_1,...,e_N$ of $A$,  we can split
		\begin{align*}
			& \sup_{j=1,...,\ulT} \| E^{j-1,0}_{I}\|_{L^2(\Omega,H)}^2\\
			= & 	\sup_{j=1,...,\ulT}\int_0^{(j-1)\Delta} \| E(\Delta) S((j-1)\Delta-u) Q^{\frac 12}\|_{\cL_2(H)}^2du\\
			= &	\sup_{j=1,...,\ulT} \int_0^{(j-1)\Delta} \| P_N E(\Delta) S((j-1)\Delta-u) Q^{\frac 12}\|_{\cL_2(H)}^2du\\
			&+	\sup_{j=1,...,\ulT}\int_0^{(j-1)\Delta} \| (I-P_N) E(\Delta) S((j-1)\Delta-u) Q^{\frac 12}\|_{\cL_2(H)}^2du\\
			=&	\sup_{j=1,...,\ulT}	 (1)^{j}_{\Delta,N}+(2)^{j}_{\Delta,N}.
		\end{align*}
		Since $\|P_N A\|_{\cL(H)}<\infty$ we have 
		\begin{align*}
			&\sup_{j=1,...,\ulT}	 (1)^{j}_{\Delta,N}\\ 
			\leq &\|P_N A\|^2_{\cL(H)} \| E(\Delta) A^{-1}\|_{\cL(H)}^2 \int_0^{(j-1)\Delta} \| S((j-1)\Delta-u) Q^{\frac 12}\|_{\cL_2(H)}^2du\\
			\lesssim &\Delta^2 \|P_N A\|^2_{\cL(H)}, 
		\end{align*}
		which  proves that $\Delta^{-1} i_{\Delta,N}\to 0$ for $\Delta\to 0$ and $N$ fixed. Moreover,   using \eqref{eq:semigroup-integral} we find
		\begin{align*}
			\sup_{i=1,...,\ulT}	 (2)^{j}_{\Delta,N}\leq &\|  E(\Delta) A^{-\frac 12}\|_{\cL(H)}^2 \int_0^{(j-1)\Delta}   \|  A^{\frac 12}S(u) (I-P_N) Q^{\frac 12}\|_{\cL_2(H)}^2 du \\
			\lesssim &  \Delta   \|(I-P_N) Q^{\frac 12}\|_{\cL_2(H)}^2.
		\end{align*}
		Since $Q^{\frac 12}$ is Hilbert-Schmidt,  $  \|(I-P_N) Q^{\frac 12}\|_{\cL_2(H)}^2\to 0$ as $N\to \infty$. This yields that $\lim_{n\to \infty}\sup_{\Delta\in (0,1]}\Delta^{-1} \sup_{i=1,...,\ulT}	 (2)^{j}_{\Delta,N}=0$. 
		In this case we could find an $N_{\epsilon}$ independent of $\Delta$ such that $\sup_{0<\Delta\leq 1} \Delta^{-1} \sup_{i=1,...,\ulT}	 (2)^{j}_{\Delta,N}<\epsilon/2$ as well as an $n_{\epsilon}$ only depending on $N_{\epsilon}$ (and hence only on $\epsilon$) such that $$\Delta^{-1} \sup_{i=1,...,\ulT}	 (1)^{j}_{\Delta,N_{\epsilon}}<\epsilon/2$$ for all $\Delta<\Delta_{\epsilon}$.  Hence, for all $\epsilon> 0$ we have for $\Delta<\Delta_{\epsilon}$ that 
		\begin{align*}
			&\sup_{i=1,...,\ulT}    \Delta^{-1}  \| E^{j-1,0}_{I}\|_{L^2(\Omega,H)}^2 \\
			&\quad\leq\Delta^{-1}\left( \sup_{i=1,...,\ulT}	 (1)^{j}_{\Delta,N_{\epsilon}} +  \sup_{i=1,...,\ulT}	 (2)^{j}_{\Delta,N_{\epsilon}}\right)\\
			&\quad\leq \epsilon.
		\end{align*}
		Since this holds for all $\epsilon>0$,  we have shown \eqref{eq: bound on adjusted increment remainder}. 
		
		We now turn to the proof of \eqref{eq: bound on mixed term sum}.
		Using that
		\begin{align*}
			&\sum^{\ulT-1}_{j=1} \sum^{\ulT}_{i = j+1} \| E(\Delta)^\frac{1}{2} S(\Delta(i-j)) \Gamma \|^2_{\cL_2(H)} \\
			&\quad= \sum_{k=1}^\infty \sum^{\ulT-1}_{j=1} \sum^{\ulT}_{i = j+1} (1-e^{-\lambda_k \Delta}) e^{-2\lambda_k \Delta(i-j)}  \| \Gamma^* e_k \|^2 \\
			&\quad= \sum_{k=1}^\infty \frac{(1-e^{-\lambda_k \Delta})((\ulT-1)(e^{2\lambda_k \Delta}-1)-(1-e^{-\lambda_k \Delta (\ulT-1)}))}{(e^{2\lambda_k \Delta}-1)^2} \| \Gamma^* e_k \|^2 \\
			&\quad\le \sum_{k=1}^\infty \frac{(1-e^{-\lambda_k \Delta})(\ulT-1)}{e^{2\lambda_k \Delta}-1} \| \Gamma^* e_k \|^2,
		\end{align*}
		and that $x \mapsto (1-e^{-x})/(e^{2 x}-1)$ is bounded for any Hilbert Schmidt operator $\Gamma$, we find for $\Gamma= E(\Delta)^{\frac 12}S(r)Q^{\frac 12}$ that
		\begin{align*}
			\sum^{\ulT-1}_{j=1} \sum^{\ulT}_{i = j+1} \|E^{j,j-1}_{i-j,I}\|^2_{L^2(\Omega,H)} \lesssim & \frac T {\Delta} \int_0^{\Delta}\| E(\Delta)^{\frac 12} S(r) Q^{\frac 12}\|^2_{\cL_2(H)}dr\\
			\leq &  \frac T {\Delta}     \int_0^{\Delta}\| P_N E(\Delta)^{\frac 12} S(r) Q^{\frac 12}\|^2_{\cL_2(H)}dr\\
			&+   \frac T {\Delta} \int_0^{\Delta}\| (I-P_N) E(\Delta)^{\frac 12} S(r) Q^{\frac 12}\|^2_{\cL_2(H)}dr\\
			= & (A)_{\Delta,N}+(B)_{\Delta,N}.
		\end{align*}
		Now it is
		\begin{align*}
			& \int_0^{\Delta}\| P_N E(\Delta)^{\frac 12} S(r) Q^{\frac 12}\|^2_{\cL_2(H)}dr\\
			\leq & \| E(\Delta)^\frac{1}{2} A^{-\frac{1}{2}}\|^2_{\cL(H)} 	 \int_0^{\Delta}\| P_N A^{\frac 12} S(r) Q^{\frac 12}\|^2_{\cL_2(H)}dr\\
			\leq & \Delta 	 \int_0^{\Delta}\| P_N A^{\frac 12}\|_{\cL(H)}\| S(r) Q^{\frac 12}\|^2_{\cL_2(H)}dr\\
			\lesssim & \Delta^2 \| P_N A^{\frac 12}\|_{\cL(H)}.
		\end{align*}		
		This shows that $\lim_{\Delta \to 0}   (A)_{\Delta,N}=0$ for $N$ fixed. Moreover,  we can derive
		\begin{align*}
			(B)_{\Delta,N} \lesssim &  \| E(\Delta)^\frac{1}{2} A^{-\frac{1}{2}}\|^2_{\cL(H)}	 \int_0^{\Delta}\|  A^{\frac 12} S(r) (I-P_N)Q^{\frac 12}\|^2_{\cL_2(H)} dr\\
			\leq & \Delta   \|(I-P_N)Q^{\frac 12}\|^2_{\cL_2(H)}.
		\end{align*}
		Since $ \|(I-P_N)Q^{\frac 12}\|^2_{\cL_2(H)}\to 0$ as $N\to \infty$, we find that $\lim_{N\to \infty} (B)_{\Delta,N}=0$. The proof of \eqref{eq: bound on mixed term sum} now follows analogously as in the proof of \eqref{eq: bound on adjusted increment remainder}.
		
		Similarly, we obtain
		\begin{align*}
			\sum^{\ulT-1}_{j=1} \sum^{\ulT}_{i = j+1} \|E^{j-1,0}_{i-j,I}\|^2_{L^2(\Omega,H)} \lesssim & \frac T {\Delta} \int_0^{(j-1)\Delta}\| E(\Delta)^{\frac 12} S(r) Q^{\frac 12}\|^2_{\cL_2(H)}dr\\
			\leq  &   \frac T {\Delta}	 \| E(\Delta) A^{\frac 12}\|_{\cL(H)}\int_0^{(j-1)\Delta}\| A^{\frac 12}S(r) Q^{\frac 12}\|^2_{\cL_2(H)}dr\\
			\lesssim & T. 
		\end{align*}
		That shows \eqref{eq: mixed sum bound II}.
		
		Let us now assume that { Assumption~\ref{ass:Ih} is satisfied and we have by \eqref{eq:semigroup-hs},
			\begin{align}\label{eq: bound on W_I_h}
				\|\tilde \Delta W^1_O\|^2_{L^2(\Omega,H)} &= \int_{0}^{\Delta} \| O S(r) Q^{\frac 1 2} \|_{\cL_2(H)}^2 \dd r \notag\\
				&= \int_{0}^{\Delta} \| O A^{-\frac{d+\epsilon}{4}} A^{\frac{1-\beta}{4}} S(r) A^{\frac{\beta + d + \epsilon - 1}{4}} Q^{\frac 1 2} \|_{\cL_2(H)}^2 \dd r \notag\\
				&\lesssim \int_{0}^{\Delta} \| O A^{-\frac{d+\epsilon}{4}} A^{\frac{1-\beta}{4}} S(r) \|_{\cL_2(H)}^2 \dd r \notag\\
				&= \sum_{i, j = 1}^\infty \int_{0}^{\Delta} \langle A^{\frac{1-\beta}{4}}S(r) e_i, (O A^{-\frac{d+\epsilon}{4}})^*e_j \rangle^2 \dd r \notag \\
				&\lesssim \int_{0}^{\Delta} \|A^{\frac{1-\beta}{4}} S(r) (O A^{-\frac{d+\epsilon}{4}})^* \|_{\cL_2(H)}^2 \dd r \lesssim \Delta^{\min(1,(1+\beta)/2)}.
			\end{align}
			In the last step, we also used  Lemma~\ref{lem:obs-hs}, which here yields $\| (OA^{-(d+\epsilon)/4})^*\|_{\cL_2(H)} = \| O\|_{\cL_2(\dot{H}^{d/2+\epsilon/2},H)} < \infty$. Moreover, for $O=I_h$, we find
			\begin{align}\label{eq: bound on E_0_I_h                                                                                                                                                     }
				\|E^{j-1,0}_O\|^2_{L^2(\Omega,H)} &= \int_{0}^{j\Delta} \| O S(r) E(\Delta) Q^{\frac 1 2} \|_{\cL_2(H)}^2 \dd r \notag\\
				&\lesssim \int_{0}^{(j-1)\Delta} \| A^{\frac 1 2}S(r) A^{-\frac{\beta+1+\frac{\epsilon}2}4} E(\Delta) (O A^{-\frac{d+\frac{\epsilon}2}4})^*\|_{\cL_2(H)}^2 \dd r \notag\\
				&\lesssim \| A^{{-\frac{\beta+1+\frac{\epsilon}2}{4}}} E(\Delta ) \|_{\cL(H)}^2 \lesssim \Delta^{\min(2,(1+\beta+\frac{\epsilon}2)/2)},
			\end{align}
			
			Using that, for arbitrary $\Gamma \in \cL_2(H)$,
			\begin{align*}
				\sum^{\ulT-1}_{j=1} \sum^{\ulT}_{i = j+1} \| E(\Delta)^\frac{1}{2} S(\Delta(i-j)) \Gamma \|^2_{\cL_2(H)} \le \sum_{k=1}^\infty \frac{(1-e^{-\lambda_k \Delta})(\ulT-1)}{e^{2\lambda_k \Delta}-1} \| \Gamma^* e_k \|^2,
			\end{align*}
			and that $x \mapsto (1-e^{-x})/(e^{2 x}-1)$ is bounded we obtain
			\begin{align*}
				&\sum^{\ulT-1}_{j=1} \sum^{\ulT}_{i = j+1} \|E^{j,j-1}_{i-j,O}\|^2_{L^2(\Omega,H)} \\
				&\hspace{0.75em}= \int_{0}^{\Delta} \sum^{\ulT-1}_{j=1} \sum^{\ulT}_{i = j+1} \| (A^{\frac{\beta + d + \epsilon - 1}{4}}Q^{\frac 1 2})^* E(\Delta) S((i - j)\Delta) \\  &\hspace{11em}\times A^{\frac{1-\beta-\frac{\epsilon}2}{4}} S(r)  (O A^{-\frac{d+\frac{\epsilon}2}{4}})^*\|_{\cL_2(H)}^2 \dd r \\
				&\hspace{0.75em}\lesssim \int_{0}^{\Delta} \sum^{\ulT-1}_{j=1} \sum^{\ulT}_{i = j+1} \| E(\Delta)^{\frac 1 2} S((i - j)\Delta) \\  &\hspace{11em}\times 
				E(\Delta)^{\frac 1 2} A^{\frac{1-\frac{\epsilon}2-\beta}{4}} S(r)  (O A^{-\frac{d+\frac{\epsilon}2}{4}})^*\|_{\cL_2(H)}^2 \dd r \\
				&\hspace{0.75em} \lesssim \frac{T }{\Delta} \int_{0}^{\Delta} \| E(\Delta)^{\frac 1 2} A^{-\frac{1+\frac{\epsilon}2}{4}}\|^2_{\cL(H)} \|A^{\frac{2-\beta}{4}} S(r)  (O A^{-\frac{d+\frac{\epsilon}2}{4}})^*\|_{\cL_2(H)}^2 \dd r \\
				&\hspace{0.75em}\lesssim T \Delta^{\frac{\min(1,\beta-1)}{2}} \Delta^{\frac{\epsilon}4}.
			\end{align*}
			Similarly, 
			\begin{align*}
				&\sum^{\ulT-1}_{j=1} \sum^{\ulT}_{i = j+1} \|E^{j-1,0}_{i-j,O}\|^2_{L^2(\Omega,H)} \\
				&\quad= \sum^{\ulT-1}_{j=1} \sum^{\ulT}_{i = j+1} \int_{0}^{\Delta(j-1)} \|  O E(\Delta) S(\Delta(i-j)) S(r) Q^{\frac 1 2} \|_{\cL_2(H)}^2 \dd r \\
				&\quad\lesssim \int_{0}^{T} \sum^{\ulT-1}_{j=1} \sum^{\ulT}_{i = j+1} \|  O E(\Delta) S(\Delta(i-j)) S(r) Q^{\frac 1 2} \|_{\cL_2(H)}^2 \dd r \\
				&\quad\lesssim \ulT \int_{0}^{T} \| E(\Delta)^{\frac 1 2} A^{\frac{1+\beta}{4}} \|^2_{\cL(H)} \|A^{\frac{1}{2}} S(r)  (O A^{-\frac{d+\epsilon}{4}})^*\|_{\cL_2(H)}^2 \dd r \\\
				&\quad\lesssim T \Delta^{\frac{\min(0,\beta-1)}{2}}.
		\end{align*}}
		
		The proof is complete.

	\end{proof}
	
	\subsection{Proofs of Section \ref{Sec: Identifiability}}
	
	\begin{proof}[Proof of Theorem \ref{thm: Identifiability}]
		Let us first assume that $X_0=0$ almost surely. In this case,  Lemma \ref{lem:var-conv} below guarantees that the variance of realized covariation converges to $0$, so we only need to show convergence of the bias. For that, observe that
		\begin{align*}
			Bias\left(\frac{RV_T^{\Delta,I}}T\right)=& \frac 1T\sum_{i=1}^{\ulT} \left(\mathbb E\left[\left(\tilde \Delta W_I^i\right)^{\otimes 2}\right]-\Delta Q\right)+\mathbb E\left[\left(E_I^{i-1,0}\right)^{\otimes 2}\right].
		\end{align*}
		Due to to the boundedness of the semigroup and using the triangle inequality we find for the first term that
		\begin{align*}
			\left\| \sum_{i=1}^{\ulT} \left(\mathbb E\left[\left(\tilde \Delta W_I^i\right)^{\otimes 2}\right]-\Delta Q\right)\right\|_{\cL_2(H)}= & \left\|\sum_{i=1}^{\ulT} \int_0^{\Delta} \left(S(u)QS(u)-Q\right)du\right\|_{\cL_2(H)}\\
			\lesssim & T \sup_{u\leq \Delta} \|(I-S(u))Q^{\frac 12}\|_{\cL(H)} \|Q^{\frac 12}\|_{\cL_2(H)}.
		\end{align*}
		This converges to $0$ as $\Delta \to 0$ by Lemma 5.1(i) in \cite{Benth2022}. 
		
		The second summand also converges to $0$, since by \eqref{eq: bound on adjusted increment remainder} it is 
		\begin{align*}
			\left\|\sum_{i=1}^{\ulT} \mathbb E\left[\left(E_I^{i-1,0}\right)^{\otimes 2}\right]\right\|_{\cL_2(H)}\leq \sum_{i=1}^{\ulT} \left\|E_I^{i-1,0}\right\|_{L^2(\Omega,H)}^2 \lesssim T\zeta_{\Delta},
		\end{align*}
		for a real-valued sequence $\zeta_{\Delta}$ converging to $0$. 
		
		Let us now turn to the case that $X_0\neq 0$ for which we recall the notation $Y_t=X_t-S(t)X_0$. By the triangle inequality and the Cauchy--Schwartz inequality we have
		\begin{align*}
			&\left\|\frac{RV_T^{\Delta,I}}T-Q\right\|_{\cL_2(H)}\\
			\lesssim & \left\|T^{-1}\sum_{i=1}^{\ulT} (Y_{i\Delta}-Y_{(i-1)\Delta})^{\otimes 2}-Q\right\|_{\cL_2(H)}\\
			&+ 2T^{-1} \left(\sum_{i=1}^{\ulT} \| E(\Delta)S((i-1)\Delta) X_0\|^2\right)^{\frac 12}\left(\sum_{i=1}^{\ulT}\left\|Y_{i\Delta}-Y_{(i-1)\Delta}\right\|^2\right)^{\frac 12}\\
			&+T^{-1}\sum_{i=1}^{\ulT} \| E(\Delta)S((i-1)\Delta) X_0\|^2.
		\end{align*}
		Since the first summand on the right of the inequality converges to $0$, we only need to prove that the latter two terms converge to $0$ as $\Delta \to 0$ in $L^2(\Omega)$. For that observe that since $Y$ is independent of $\mathcal F_0$
		\begin{align*}
			& \mathbb E\left[\left(\sum_{i=1}^{\ulT} \| E(\Delta)S((i-1)\Delta) X_0\|^2\right)\left(\sum_{i=1}^{\ulT}\left\|Y_{i\Delta}-Y_{(i-1)\Delta}\right\|^2\right)\right]\\
			= & \mathbb E\left[\sum_{i=1}^{\ulT} \| E(\Delta)S((i-1)\Delta) X_0\|^2\right]\mathbb E\left[\sum_{i=1}^{\ulT}\left\|Y_{i\Delta}-Y_{(i-1)\Delta}\right\|^2\right]\\ 
			\leq & \mathbb E\left[\sum_{i=1}^{\ulT} \| E(\Delta)S((i-1)\Delta) X_0\|^2\right]\mathbb E\left[2\sum_{i=1}^{\ulT}\left\|E(\Delta)Y_{(i-1)\Delta}\right\|^2+\|\tilde \Delta W_I^i\|^2\right], 
		\end{align*}
		where the second factor on the right is bounded by \eqref{eq: bound on adjusted increment} and \eqref{eq: bound on adjusted increment remainder} in Lemma \ref{lem: second moments of increments}. It is therefore left to show 
		\begin{equation}
			\lim_{\Delta \to 0}\mathbb E\left[\left(\sum_{i=1}^{\ulT} \| E(\Delta)S((i-1)\Delta) X_0\|^2\right)^2\right] =0.
		\end{equation}
		For that,
		observe that let $P_N= \sum_{i=1}^{N} e_i^{\otimes 2}$ is the projection onto the first $N$ eigenvalues $e_1,...,e_N$ of $A$. We then have
		\begin{align*}
			&\mathbb E\left[\left(\sum_{i=1}^{\ulT} \| E(\Delta)S((i-1)\Delta) X_0\|^2\right)^2\right]\\
			&\lesssim  \mathbb E\left[\left(\sum_{i=1}^{\ulT} \| E(\Delta)S((i-1)\Delta) P_NX_0\|^2\right)^2\right]\\
			&\qquad+ \mathbb E\left[\left(\sum_{i=1}^{\ulT} \| E(\Delta)S((i-1)\Delta) (I-P_N)X_0\|^2\right)^2\right].
		\end{align*}
		Since $\|AP_N\|_{\cL(H)}<\infty$ and using \eqref{eq:semigroup-time-bound} the first of these summands can be bounded from above by 
		\begin{align*}
			&\mathbb E\left[\left(\sum_{i=1}^{\ulT} \| E(\Delta)S((i-1)\Delta) P_NX_0\|^2\right)^2\right] \\
			&\quad\lesssim (\|E(\Delta) A^{-1}\|^2\ulT)^2 \mathbb E[\|X_0\|^4]\lesssim \Delta^2
		\end{align*}
		converging to $0$ for each fixed $N\in\mathbb N$.
		For the other summand, we find, using \eqref{eq:semigroup-time-bound} and  \eqref{eq:semigroup-analyticity} that 
		\begin{align*}
			&\mathbb E\left[\left(\sum_{i=1}^{\ulT} \| E(\Delta)S((i-1)\Delta) (I-P_N)X_0\|^2\right)^2\right]\\ &\lesssim \left(1+\|E(\Delta) A^{-1}\|^2\sum_{i=2}^{\ulT} \|A S((i-1)\Delta)\|_{\cL(H)}^2\right)^2 \mathbb E[\|(I-P_N)X_0\|^4]\\
			&\lesssim \mathbb E[\|(I-P_N)X_0\|^4]
		\end{align*}
		By dominated convergence, the latter converges to $0$ as $N\to \infty$, uniformly in $n$, so an analogous argument as in the proof of \eqref{eq: bound on adjusted increment remainder}
		yields the claim. 
	\end{proof}
	
	\subsection{Proofs of Appendix \ref{Sec: Pointwise Sampling}}
	\begin{proof}[Proof of Proposition \ref{prop:cylindrical-pointwise}]
		We identify $\dot{H}^r$ and $U$ with their duals and define a cylindrical random variable $Y_t \colon \dot{H}^r \to L^2(\Omega,\R)$ as the linear map given by
		\begin{equation}
			\label{eq:prop:cylindrical-pointwise:pf1}
			Y_t v := \sum_{j = 1}^\infty \langle v, e_j \rangle_{\dot{H}^r} \langle Y_t, e_j \rangle, \quad v \in \dot{H}^r.
		\end{equation}
		
		To see that \eqref{eq:prop:cylindrical-pointwise:pf1} is well-defined, we first recall that since $Q^{1/2}$ is symmetric, the condition $\| Q^{1/2} \|_{\cL(H,\dot{H}^{r-1})} < \infty$ yields that $Q^{1/2}$ extends to an operator in $\cL(\dot{H}^{1-r},H)$, so that $\| Q^{1/2} A^{(r-1)/2}\|_{\cL(H)} < \infty$. Thus, by~\eqref {eq:semigroup-integral}, with $v \in H$,
		\begin{align*}
			\| \Cov(Y_t)^{1/2} v \|^2 = \langle \Cov(Y_t) v, v \rangle &= \int^t_0 \langle S(s) Q S(s) v, v \rangle \dd s \\ &= \int^t_0 \| Q^{\frac 1 2} A^{\frac{r-1}{2}} A^{\frac{1}{2}} S(s) A^{-\frac{r}{2}} v \|^2 \dd s\\
			&\lesssim \int^t_0 \| A^{\frac{1}{2}} S(s) A^{-\frac{r}{2}} v \|^2 \dd s \lesssim \| A^{-\frac{r}{2}} v \|^2.
		\end{align*}
		By density, therefore, $\Cov(Y_t)^{1/2}$ extends to an operator in $\cL(\dot{H}^{-r},H)$, so that $\| \Cov(Y_t)^{1/2} A^{r/2}\|_{\cL(H)} < \infty$. Using this fact, we obtain for arbitrary $N \in \N$ that 
		\begin{align*}
			&\Big\| \sum_{j = 1}^N \langle v, e_j \rangle_{\dot{H}^r} \langle Y_t, e_j \rangle \Big\|^2_{L^2(\Omega,\R)} \\
			&\quad= \sum_{i,j = 1}^N \langle v, e_i \rangle_{\dot{H}^r} \langle v, e_j \rangle_{\dot{H}^r} \langle \Cov(Y_t) e_i, e_j \rangle \\
			&\quad= \sum_{i,j = 1}^N \langle A^{\frac r 2} v, e_i \rangle \langle A^{\frac r 2} v, e_j \rangle \langle \Cov(Y_t)^{\frac 1 2} A^{\frac r 2} e_i, \Cov(Y_t)^{\frac 1 2} A^{\frac r 2} e_j \rangle \\
			&\quad= \| \Cov(Y_t)^{\frac 1 2} A^{\frac r 2} P_N A^{\frac r 2} v \|^2 \lesssim \| P_N A^{\frac r 2} v \|^2 \lesssim \|v \|_{\dot{H}^r}, , \quad v \in \dot{H}^r, 
		\end{align*}
		where $P_N$ denotes the orthogonal projection onto the span of $\{e_1, \ldots, e_N\}$. This shows that that~\eqref{eq:prop:cylindrical-pointwise:pf1} is a well-defined cylindrical random variable.
		
		To see that the last claim of the proposition holds true, we first note that by definition of linear operators acting on cylindrical random variables, $\Gamma Y_t \colon \dot{H}^r \to L^2(\Omega,U)$ is given by 
		\begin{equation*}
			\Gamma Y_t u = \sum_{j = 1}^\infty \langle \Gamma^* u, e_j \rangle_{\dot{H}^r} \langle Y_t, e_j \rangle, \quad u \in U,
		\end{equation*}
		and we must show that 
		\begin{equation*}
			\Gamma Y_t u = \Big\langle \int^t_0 \Gamma S(t-s) \dd W(s), u  \Big\rangle_{U}, \quad \Prob\text{-a.s.}
		\end{equation*}
		The $U$-valued stochastic integral is well-defined, since by~\eqref{eq:semigroup-hs},
		\begin{align*}
			\int^t_0 \| \Gamma S(s) Q^{\frac 1 2} \|^2_{\cL_2(H,U)} \dd s &= \int^t_0 \| Q^{\frac 1 2} A^{\frac{r-1}{2}} A^{\frac 1 2} S(s) (\Gamma A^{-\frac{r}{2}})^* \|^2_{\cL_2(U,H)} \dd s \\
			&\lesssim \int^t_0 \| A^{\frac 1 2} S(s) (\Gamma A^{-\frac{r}{2}})^* \|^2_{\cL_2(U,H)} \dd s \\
			&\lesssim \| (\Gamma A^{-\frac{r}{2}})^* \|^2_{\cL_2(U,H)} = \|\Gamma \|^2_{\cL_2(\dot{H}^{r},U)}.
		\end{align*}
		By the same arguments, It\^o isometry and the dominated convergence theorem, 
		\begin{equation*}
			\Big\| \int^t_0 \Gamma S(t-s) (I-P_N) \dd W(s) \Big\|^2_{L^2(\Omega,U)} \lesssim \| (I-P_N) (\Gamma A^{\frac{-r}{2}})^* \|^2_{\cL_2(U,H)} \to 0
		\end{equation*}
		as $N \to 0$. Since also
		\begin{align*}
			\sum_{j = 1}^N \langle \Gamma^* u, e_j \rangle_{\dot{H}^r} \langle Y_t, e_j \rangle &= \int^t_0 \sum_{j = 1}^N \langle \Gamma^* u, e_j \rangle_{\dot{H}^r} \langle S(t-s) \cdot, e_j \rangle \dd W(s) \\
			&= \Big\langle \int^t_0 \Gamma S(t-s) P_N \dd W(s), u \Big\rangle_U,
		\end{align*}	
		the claim follows from the Cauchy--Schwarz inequality.	\qedhere			
	\end{proof}		
	
	\subsection{Proofs of Section \ref{Sec: Sampling setting}}\label{Sec: Proofs of Section 3}
	\begin{proof}[Proof of Lemma \ref{lem:obs-hs}]
		For $s>d/2$, the Sobolev space $H^s$ is a reproducing kernel Hilbert space. Let its kernel be $k_s$, which is continuous and bounded on $\bar{\cD} \times \bar{\cD}$ by a constant $C_s$. For any orthonormal basis $(f_j)_{j=1}^\infty$ of $H^s$, we may write 
		\begin{equation*}
			k_s(x,y) = \sum_{j = 1}^\infty f_j(x) f_j(y), \quad x,y \in \cD.
		\end{equation*}
		Using this and the definition of $I_h$, it follows that
		\begin{align*}
			\| I_h \|_{\cL_2(H^s,H)}^2 &= \sum_{j=1}^{\infty} \| I_h f_j \|^2_{H} = \sum_{j=1}^{\infty} \int_{\cD} \left( \sum_{x_k \in \ver(\cT_h)} f_j(x_k) \phi^k_h(x) \right)^2 \dd x \\
			&= \sum_{x_k, x_\ell \in \ver(\cT_h)} \left( \sum_{j=1}^\infty f_j(x_k) f_j(x_\ell) \right) \int_{\cD} \phi^k_h(x) \phi^\ell_h(x) \dd x \\
			&= \sum_{x_k, x_\ell \in \ver(\cT_h)} k_s(x_k, x_\ell) \int_{\cD} \phi^k_h(x) \phi^\ell_h(x) \dd x.
		\end{align*}
		Using the boundedness of the kernel, the non-negativity of $\phi^k_h$ and the partition of unity property $\sum_{x_k} \phi^k_h(x) = 1$, we have 
		\begin{align*}
			\| I_h \|_{\cL_2(H^s,H)}^2 \le C_s \sum_{x_k, x_\ell} \int_{\cD} \phi^k_h(x) \phi^\ell_h(x) \dd x = C_s |\cD|.
		\end{align*}
		
		The calculation above establishes that for any $s > d/2$, 
		\begin{equation} \label{eq:Ih_HS_H_revised}
			\sup_{h \in (0,1]} \| I_h \|_{\cL_2(H^s,H)} < \infty.
		\end{equation}		
		Moreover, by \cite[Proposition 4.1]{LordPetersson25}, for any $s > d/2$ and $r_0 \in (0, \min(3/2,s))$,
		\begin{equation} \label{eq:Ih_bounded_Hs_H_r0}
			\sup_{h \in (0,1]} \| I_h \|_{\cL(H^s, H^{r_0})} < \infty.
		\end{equation}
		
		Now, let $s > d/2$ be fixed and choose any $r_0 \in (0, \min(3/2,s))$. We apply interpolation of Schatten class operators (see \cite{Gapaillard1974}) to the bounds \eqref{eq:Ih_HS_H_revised} and \eqref{eq:Ih_bounded_Hs_H_r0} on $I_h$.
		For any $\theta \in (0,1)$, this produces an operator in the Schatten class $\cL_p$ with $1/p = (1-\theta)/\infty + \theta/2 = \theta/2$. The target space is the interpolation space $[H^{r_0}, H]_\theta = H^{r_0(1-\theta)}$. Thus, for any $\theta \in (0,1)$, $I_h \in \cL_{2/\theta}(H^s, H^{r_0(1-\theta)})$.
		
		Finally, let $r > d/2$ be given. We write $I_h$ as an operator from $H^r$ to $H^\epsilon$ as the composition $I_h \circ I_{H^r \hookrightarrow H^s}$. For this composition to be in $\cL_2$, the Hölder property for Schatten norms requires $I_{H^r \hookrightarrow H^s} \in \cL_q(H^r, H^s)$ where $1/2 = 1/(2/\theta) + 1/q$, which gives $1/q = 1/2 - \theta/2 = (1-\theta)/2$, or $q = 2/(1-\theta)$.
		
		The embedding $I_{H^r \hookrightarrow H^s}: H^r \to H^s$ is in $\cL_q(H^r, H^s)$ provided that $r-s > d/q$, which translates to the condition $r-s > {d(1-\theta)}/{2}$.
		Given $r > d/2$, we can always find parameters that satisfy this. First, choose any $s$ such that $d/2 < s < r$. Then choose $\theta \in (0,1)$ so that $\theta > 1 - {2(r-s)}/{d}$. With such a choice of $s$ and $\theta$, and our fixed $r_0$, the composition $I_h \circ I_{H^r \hookrightarrow H^s}$ is in $\cL_2(H^r, H^{r_0(1-\theta)})$. We set $\epsilon = r_0(1-\theta)$. Since $r_0 > 0$ and $\theta < 1$, we have found an $\epsilon > 0$ for which the statement is valid. This completes the proof.
	\end{proof}
	
	\begin{proof}[Proof of Lemma \ref{lem:obs-error}]	
		The result for $O=P_h$ is noted in \cite{BelgacemBrenner01}, the authors of which only consider triangular subdivisions. The proof is the same in our setting: a scaling argument combined with Friedrich's inequality (see \cite[Lemma~4.3.14]{BrennerScott08}).
		The result for $O=I_h$ is shown in \cite[Example~3]{DupontScott80} for $d=2$. The proof for general $d$ is essentially the same, replacing the infimum over linear functions in the application of the Bramble-Hilbert lemma with the infimum over constant functions when $r$ is small and $d=1$.
	\end{proof}
	
	\subsection{Proofs of Section \ref{Sec: Asymptotic Theory}}
	\subsection{Key discretization result }
	We first proof a key result that we will employ for the proofs of Theorem \ref{thm:RV-convergence} and \ref{thm: CLT}. For that, 
	we introduce the auxiliary estimator
	\begin{align}\label{eq: SARCV}
		SARCV_t^{\Delta,O}:=\sum_{i=1}^{\ul} \left(OX_{i\Delta}-O \mathcal S(\Delta)X_{(i-1)\Delta}\right)^{\otimes 2}, \quad t \ge 0,
	\end{align}
	which was examined for the case $O=I$  in \cite{Benth2022} and \cite{BSV2022}.  
	To make use of the asymptotic theory introduced in these two articles, we can establish an asymptotic equivalence between $SARCV^{\Delta,O}$ and $RV^{\Delta,O}$. For that, we introduce
	\begin{lemma}\label{lem: RV-SARCV asympt equiv}
		Let $X_0\equiv 0$ and Assumption \ref{ass: minimal Assumption on A}  be valid.
		If $O\neq I_h$,
		\begin{align*}
			\|(RV^{\Delta,O}_{T}-\mathbb E[RV^{\Delta,O}_T])-(SARCV^{\Delta,O}_{T}-\mathbb E[SARCV^{\Delta,O}_{T}])\|_{L^2(\Omega,\cL_2(H))}= o(T^{\frac 12} \Delta^{\frac 12}),
		\end{align*}
		whereas if $O= I_h$ and Assumption \ref{ass:Ih} holds, 
		\begin{equation}
			\label{eq: SARCV-RV equivalence}
			\begin{split}
				&\|(RV^{\Delta,O}_{T}-\mathbb E[RV^{\Delta,O}_{T}])-(SARCV^{\Delta,O}_{T}-\mathbb E[SARCV^{\Delta,O}_{T}])\|_{L^2(\Omega,\cL_2(H))} \\
				&\quad= o\left(T^{\frac 12} \Delta^{\frac {\min(1,\beta)}2}\right).
			\end{split}
		\end{equation}
	\end{lemma}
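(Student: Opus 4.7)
Set $a_i := OX_{i\Delta}-OS(\Delta)X_{(i-1)\Delta}=\tilde\Delta W^i_O$ and $b_i := OE(\Delta)X_{(i-1)\Delta}=E^{i-1,0}_O$, so that under $X_0\equiv 0$ the increment $OX_{i\Delta}-OX_{(i-1)\Delta}$ equals $a_i-b_i$. Expanding the squares and using the cancellation $\E[a_i\otimes b_i]=0$---which follows from $\E[a_i]=0$ together with the independence of $a_i$ and the $\mathcal{F}_{(i-1)\Delta}$-measurable $b_i$---centering reduces the left-hand side of the lemma to
\[
(RV^{\Delta,O}_T-\E RV^{\Delta,O}_T)-(SARCV^{\Delta,O}_T-\E SARCV^{\Delta,O}_T) = -S_1-S_1^*+S_2,
\]
with $S_1 := \sum_{i=1}^{\ulT} a_i\otimes b_i$ and $S_2 := \sum_{i=1}^{\ulT}\bigl(b_i^{\otimes 2}-\E[b_i^{\otimes 2}]\bigr)$. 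It thus suffices to estimate $\|S_1\|_{L^2(\Omega,\cL_2(H))}$ and $\|S_2\|_{L^2(\Omega,\cL_2(H))}$ separately.

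The term $S_1$ is a martingale-difference sum with respect to $(\mathcal{F}_{i\Delta})$, so by orthogonality and the independence of $a_i$ and $b_i$,
\[
\|S_1\|_{L^2(\Omega,\cL_2(H))}^2=\sum_{i=1}^{\ulT}\E[\|a_i\|^2]\,\E[\|b_i\|^2].
\]
Inserting the bounds \eqref{eq: bound on adjusted increment}--\eqref{eq: bound on adjusted increment remainder} of Lemma~\ref{lem: second moments of increments} (which also apply for $O=P_h$, since $P_h$ is a contraction) yields $\|S_1\|_{L^2}^2\lesssim T\Delta\,\zeta_\Delta=o(T\Delta)$, while inserting \eqref{eq: bound on adjusted increment-I_h case}--\eqref{eq: bound on adjusted increment remainder-I_h case} for $O=I_h$ yields $\|S_1\|_{L^2}^2\lesssim T\Delta^{\min(1,\beta)+\epsilon/4}=o(T\Delta^{\min(1,\beta)})$.

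The delicate term is $S_2$: the $b_i$ depend on overlapping Wiener paths so no martingale structure is immediate, and the triangle bound $\|S_2\|\le 2\sum_i\|b_i\|_{L^2}^2$ only yields the bias rate $\mathcal O(T\zeta_\Delta)$, which is insufficient. The plan is to decompose the noise into independent blocks: for $j<i$ set $\eta_{i,j} := \int_{(j-1)\Delta}^{j\Delta} OE(\Delta)S((i-1)\Delta-r)\dd W(r)$, so that $b_i=\sum_{j=1}^{i-1}\eta_{i,j}$ and the collections $\{\eta_{\cdot,j}\}$ for different $j$ are driven by disjoint and hence independent Wiener increments. Expanding $b_i^{\otimes 2}$ and interchanging sums yields
\[
S_2=\sum_{j=1}^{\ulT}(D_j-\E D_j)+\sum_{j\ne k}E_{j,k},\quad D_j := \sum_{i>j}\eta_{i,j}^{\otimes 2},\quad E_{j,k} := \sum_{i>\max(j,k)}\eta_{i,j}\otimes\eta_{i,k}.
\]
The $D_j$ are independent across $j$, so $\|\sum_j(D_j-\E D_j)\|_{L^2}^2=\sum_j\Var(D_j)$; the cross terms $E_{j,k}$ associated with distinct unordered pairs $\{j,k\}$ are pairwise orthogonal in $L^2(\Omega,\cL_2(H))$, because every cross-inner-product contains a factor whose $\E$ vanishes by independence of the corresponding Wiener chunks. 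The remaining second moments $\Var(D_j)$ and $\E\|E_{j,k}+E_{k,j}\|_{\cL_2}^2$ are then computed by Wick's formula for jointly Gaussian vectors in $H$ and reduce to pairwise expressions containing \emph{two} copies of the smoothing factor $E(\Delta)$. Controlling these via the semigroup estimates~\eqref{eq:semigroup-time-bound}--\eqref{eq:semigroup-hs}, and---when $O=I_h$---the Hilbert--Schmidt bound of Lemma~\ref{lem:obs-hs}, delivers the required $o(T^{1/2}\Delta^{1/2})$ (resp.\ $o(T^{1/2}\Delta^{\min(1,\beta)/2})$) rate. The principal technical obstacle is precisely this last step: showing that the two copies of $E(\Delta)$ supply the extra $\Delta$-factor needed to beat the $\zeta_\Delta$-only bound, which is where the independent-block structure becomes decisive.
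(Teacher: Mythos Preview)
Your decomposition $-S_1-S_1^*+S_2$ is correct, and your treatment of $S_1$ as a martingale-difference sum is both valid and cleaner than anything the paper does for that part: the bounds from Lemma~\ref{lem: second moments of increments} plug in exactly as you say. The orthogonality structure you set up for $S_2$ via the independent Wiener blocks $\eta_{i,j}$ is also correct (the Isserlis-type checks go through).

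The gap is that you stop precisely at the hard step. You write that ``controlling these via the semigroup estimates \ldots\ delivers the required rate'' and then label this ``the principal technical obstacle'' without resolving it. Concretely, you have not bounded $\sum_j\Var(D_j)$ or the double sum $\sum_{j\neq k}\E\|E_{j,k}\|^2_{\cL_2(H)}$. The latter has $\sim(\ulT)^2$ terms, and one needs the semigroup decay in $i-j$ and $i-k$ (inside $\eta_{i,j}$ and $\eta_{i,k}$) to make the $k$-sum collapse; this is not automatic and is where the two copies of $E(\Delta)$ you mention must actually be exploited. As written, the proposal is an outline, not a proof.

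The paper takes a different route that avoids your $S_1/S_2$ split entirely. It expands the full $L^2(\Omega,\cL_2(H))$-norm squared as a double sum over $(i,j)$ and, for $i>j$, computes the Gaussian covariance of $(OX_{i\Delta}-OX_{(i-1)\Delta})^{\otimes 2}-(\tilde\Delta W^i_O)^{\otimes 2}$ against the $j$-th summand directly. A key device is the positivity identity $\E[(E^{i-1,0}_O)^{\otimes 2}]-\E[(E^{i-1,j-1}_O)^{\otimes 2}]=\E[(E^{j-1,0}_O)^{\otimes 2}]$, which yields a sign cancellation that kills one of the cross-terms. The surviving terms are then bounded by Cauchy--Schwarz and fed into the ready-made double-sum estimates \eqref{eq: bound on mixed term sum}--\eqref{eq: mixed sum bound II} (resp.\ \eqref{eq: mixed sum bound I-I_h case}--\eqref{eq: mixed sum bound II-I_h case}) of Lemma~\ref{lem: second moments of increments}. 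Your block decomposition would, if completed, reduce to the same quantities $\|E^{j,j-1}_{m,O}\|^2_{L^2(\Omega,H)}$ and the same double sums, so the two approaches are not far apart in substance; the paper's packaging simply gets there without the intermediate chaos decomposition, at the cost of a slightly opaque covariance computation.
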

	
	\begin{proof}
		
		We assume without loss of generality that $\gamma \le 1$.
		First, observe that
		
		\begin{equation*}
			\begin{split}
				&  \|(RV^{\Delta,O}_{T}-\mathbb E[RV^{\Delta,O}_{T}])-(SARCV^{\Delta,O}_{T}-\mathbb E[SARCV^{\Delta,O}_{T}])\|_{L^2(\Omega,\cL_2(H))}^2\\
				&=\Big\| \sum_{i = 1}^{\ulT} (O X_{i\Delta} - O X_{(i-1)\Delta})^{\otimes 2} -(\tilde \Delta W^i_O)^{\otimes 2} \\ 
				&\hspace{5em}- \E[(O X_{i\Delta} - O X_{(i-1)\Delta})^{\otimes 2}-(\tilde \Delta W^i_O)^{\otimes 2}] \Big\|_{L^2(\Omega,\cL_2(H))}^2 \\
				&= \sum_{i,j=1}^{\ulT} \big(\E[\langle (O X_{i\Delta} - O X_{(i-1)\Delta})^{\otimes 2}-(\tilde \Delta W^i_O)^{\otimes 2}, \\
				&\hspace{5em}
				(O X_{j\Delta} - O X_{(j-1)\Delta})^{\otimes 2}-(\tilde \Delta W^j_O)^{\otimes 2} \rangle_{\cL_2(H)}] \\
				&- \langle \E[(O X_{i\Delta} - O X_{(i-1)\Delta})^{\otimes 2}-(\tilde \Delta W^i_O)^{\otimes 2}], \\
				&\hspace{5em}\E[(O X_{j\Delta} - O X_{(j-1)\Delta})^{\otimes 2}] -(\tilde \Delta W^j_O)^{\otimes 2}\rangle_{\cL_2(H)}\big).
			\end{split}
		\end{equation*}
		We treat separately the cases $i > j$ (by symmetry also $j > i$) and $i=j$. 
		First we consider the case that $i > j$. By stationarity of the terms $\tilde \Delta W^k_O$ and their independence of $\cF_{(k-1)\Delta}$, $k=1, \ldots, \ulT$, we find
		\begin{align*}
			&\E[\langle (O X_{i\Delta} - O X_{(i-1)\Delta})^{\otimes 2} -(\tilde \Delta W^i_O)^{\otimes 2},(O X_{j\Delta} - O X_{(j-1)\Delta})^{\otimes 2} -(\tilde \Delta W^j_O)^{\otimes 2} \rangle] \\
			&\quad=  2 \E[\langle \tilde \Delta W^j_O,E^{i-1,0}_O \rangle \langle E^{i-1,0}_O,E^{j-1,0}_O \rangle] 
			+ \E[\langle E^{i-1,0}_O,E^{j-1,0}_O \rangle^2].
		\end{align*}
		Using the  identity $\E[ E^{i-1,0}_O \otimes E^{j,j-1}_{i-j,O}] = \E[(E^{j,j-1}_{i-j,O})^{\otimes 2}]$ we find that 
		\begin{align*}
			\E[\langle \tilde \Delta W^j_O,E^{i-1,0}_O \rangle \langle E^{i-1,0}_O,E^{j-1,0}_O \rangle] &= \langle \E[\tilde \Delta W^j_O \otimes E^{j,j-1}_{i-j,O}] , \E[E^{j-1,0}_{i-j,O} \otimes E^{j-1,0}_{O}] \rangle_{\cL_2(H)} \\
			&\quad+ \E[\langle \tilde \Delta W^j_O, E^{j,j-1}_{i-j,O} \rangle] \E[\langle E^{j-1,0}_{i-j,O},E^{j-1,0}_{O} \rangle] \\
			&\le \langle \E[\tilde \Delta W^j_O \otimes E^{j,j-1}_{i-j,O}] , \E[E^{j-1,0}_{i-j,O} \otimes E^{j-1,0}_{O}] \rangle_{\cL_2(H)} \\
			&\quad+ \frac{\E[\langle \tilde \Delta W^j_O,E^{j,j-1}_{i-j,O} \rangle^2] + \E[\langle E^{j-1,0}_{i-j,O},E^{j-1,0}_O \rangle^2]}{2}.
		\end{align*}
		Moreover, we have
		\begin{align*}
			\E[\langle E^{i-1,0}_O,E^{j-1,0}_O \rangle^2] &= \E[(\langle E^{j-1,0}_{i-j,O},E^{j-1,0}_O \rangle + \langle E^{i-1,j-1}_O,E^{j-1,0}_O \rangle)^2] \\
			&= \E[\langle E^{j-1,0}_{i-j,O},E^{j-1,0}_O \rangle^2] \\
			&\quad+ \langle \E[(E^{i-1,j-1}_O)^{\otimes 2}] , \E[(E^{j-1,0}_O)^{\otimes 2}] \rangle_{\cL_2(H)}.
		\end{align*}
		Let us also note that since
		\begin{align*}
			&\E[(E_O^{i-1,0})^{\otimes 2}] - \E[(E_O^{i-1,j-1})^{\otimes 2}] \\
			&\quad= \int^{(i-1)\Delta}_{0} O S(r) Q (O S(r))^* \dd r - \int^{(i-1)\Delta}_{(j-1)\Delta} O S(r) Q (O S(r))^* \dd r \\
			&\quad= \int^{(j-1)\Delta}_{0} O S(r) Q (O S(r))^* \dd r = \E[(E_O^{j-1,0})^{\otimes 2}], 
		\end{align*}
		we obtain the inequality
		\begin{align*}
			\langle \E[(E^{i-1,j-1}_O)^{\otimes 2}] , \E[(E^{j-1,0}_O)^{\otimes 2}] \rangle_{\cL_2(H)} -\langle \E[(E^{i-1}_O)^{\otimes 2}] , \E[(E^{j-1,0}_O)^{\otimes 2}] \rangle_{\cL_2(H)} \le 0.
		\end{align*}
		Further, we obtain
		\begin{align*}
			&\langle \E[(O X_{t_i} - O X_{t_{i-1}})^{\otimes 2}-(\tilde \Delta W_O^i)^{\otimes 2}],\E[(O X_{t_j} - O X_{t_{j-1}})^{\otimes 2}-(\tilde \Delta W_O^j)^{\otimes 2}] \rangle_{\cL_2(H)} \\
			&\quad=  \langle \E[(E^{i-1,0}_{O})^{\otimes 2}] , \E[(E^{j-1,0}_{O})^{\otimes 2}] \rangle_{\cL_2(H)}.
		\end{align*}
		
		Summing up, we obtain 
		for $i > j$,
		\begin{align*}
			&\big(\E[\langle (O X_{t_i} - O X_{t_{i-1}})^{\otimes 2}-(\tilde \Delta W^i_O)^{\otimes 2},(O X_{t_j} - O X_{t_{j-1}})^{\otimes 2}-(\tilde \Delta W^j_O)^{\otimes 2} \rangle_{\cL_2(H)}] \\
			&- \langle \E[(O X_{t_i} - O X_{t_{i-1}})^{\otimes 2}-(\tilde \Delta W^i_O)^{\otimes 2}],\E[(O X_{t_j} - O X_{t_{j-1}})^{\otimes 2} -(\tilde \Delta W^j_O)^{\otimes 2}]\rangle_{\cL_2(H)}\big)\\
			&\quad\le 2\Big( \E[\langle \tilde \Delta W^j_O,E^{j,j-1}_{i-j,O} \rangle^2] + \langle \E[\tilde \Delta W^j_O \otimes E^{j,j-1}_{i-j,O}] , \E[E^{j-1,0}_{i-j,O} \otimes E^{j-1,0}_{O}] \rangle_{\cL_2(H)} \\
			&\hspace{4em}+ \E[\langle E^{j-1,0}_{i-j,O},E^{j-1,0}_O \rangle^2] \Big)  \\
			&\quad=: 2 \big(\mathrm{I}_{j}^i + \mathrm{II}_{j}^i + \mathrm{III}_{j}^i\big) .
		\end{align*}
		We bound the first and third terms by the Cauchy--Schwarz inequality and use the fact that there is a constant $C< \infty$ such that for any Gaussian $H$-valued random variable $Z$, $\|Z\|_{L^4(\Omega,H)} \le C \|Z\|_{L^2(\Omega,H)}$ \cite[Proposition~4.16]{Hairer09}. This then yields 
		\begin{equation*}
			\mathrm{I}_{j}^i \lesssim \|\tilde \Delta W^1_O\|_{L^2(\Omega,H)}^2 \|E^{j,j-1}_{i-j,O}\|_{L^2(\Omega,H)}^2 \text{ and } \mathrm{III}_{j}^i \lesssim \|E^{j-1,0}_{O}\|_{L^2(\Omega,H)}^2 \|E^{j,j-1}_{i-j,O}\|_{L^2(\Omega,H)}^2.
		\end{equation*}
		By definition of the Hilbert--Schmidt norm and the Cauchy--Schwarz inequality, 
		\begin{align*}
			\mathrm{II}_{j}^i &\le \E[\|\tilde \Delta W^j_O \otimes E^{j,j-1}_{i-j,O}\|_{\cL_2(H)}] \E[\|E^{j-1,0}_{i-j,O} \otimes E^{j-1,0}_{O}\|_{\cL_2(H)}] \\
			&\le \|\tilde \Delta W^1_O\|_{L^2(\Omega,H)} \|E^{j,j-1}_{i-j,O}\|_{L^2(\Omega,H)} \|E^{j-1,0}_{i-j,O}\|_{L^2(\Omega,H)} \|E^{j-1,0}_{O}\|_{L^2(\Omega,H)}.
		\end{align*}
		In the case that $i=j$, we 
		have by the Cauchy-Schwartz inequality
		\begin{align*}
			&\E[\| (O Y_{t_j}-O Y_{t_{j-1}})^{\otimes 2}-(\tilde \Delta W_O^i)^{\otimes 2} \|^2] -\| \E[(O Y_{t_j}-O Y_{t_{j-1}})^{\otimes 2}-(\tilde \Delta W_O^i)^{\otimes 2} ]\|^2 \\
			& \lesssim  \|\tilde \Delta W^1_O\|_{L^2(\Omega,H)}^2 \|E^{j-1,0}_{O}\|_{L^2(\Omega,H)}^2+ \|E^{j-1,0}_{O}\|_{L^2(\Omega,H)}^4.
		\end{align*}
		As an intermediate estimate we subsume
		\begin{align}\label{eq: intermediate estimate for key discretization result}
			&	\| \left(\mathrm{RV}^{\Delta,O}_{T} - \E[\mathrm{RV}^{\Delta,O}_{T}] \right)-\left(\mathrm{SARCV}^{\Delta,O}_{T} - \E[\mathrm{SARCV}^{\Delta,O}_{T}] \right)\|_{L^2(\Omega,\cL_2(H))}^2\notag\\
			&\lesssim \sum_{j = 1}^{\ulT}  \|\tilde \Delta W^1_O\|_{L^2(\Omega,H)}^2 \|E^{j-1,0}_{O}\|_{L^2(\Omega,H)}^2+ \|E^{j-1,0}_{O}\|_{L^2(\Omega,H)}^4 \\
			&\quad+ \sum^{\ulT-1}_{j=1} \sum^{\ulT}_{i = j+1}  \|\tilde \Delta W^1_O\|_{L^2(\Omega,H)}^2 \|E^{j,j-1}_{i-j,O}\|_{L^2(\Omega,H)}^2 \notag\\
			&\hspace{4em}+  \|E^{j-1,0}_{O}\|_{L^2(\Omega,H)}^2 \|E^{j,j-1}_{i-j,O}\|_{L^2(\Omega,H)}^2\notag\\
			&\hspace{4em}+  \|\tilde \Delta W^1_O\|_{L^2(\Omega,H)} \|E^{j,j-1}_{i-j,O}\|_{L^2(\Omega,H)} \|E^{j-1,0}_{i-j,O}\|_{L^2(\Omega,H)} \|E^{j-1,0}_{O}\|_{L^2(\Omega,H)}. \notag
		\end{align}
		
		We now first prove the result in the case $O=I$. In this case we use Lemma \ref{lem: second moments of increments} and the Cauchy-Schwartz inequality  to bound the components in \eqref{eq: intermediate estimate for key discretization result} to obtain
		\begin{align*}
			&	\| \left(\mathrm{RV}^{\Delta,O}_{T} - \E[\mathrm{RV}^{\Delta,O}_{T}] \right)-\left(\mathrm{SARCV}^{\Delta,O}_{T} - \E[\mathrm{SARCV}^{\Delta,O}_{T}] \right)\|_{L^2(\Omega,\cL_2(H))}^2\\
			&\lesssim \sum_{j = 1}^{\ulT} \Delta \|E^{j-1,0}_{O}\|_{L^2(\Omega,H)}^2+ \Delta^2 \zeta_{\Delta} \\
			&\quad+ \sum^{\ulT-1}_{j=1} \sum^{\ulT}_{i = j+1}  \Delta \|E^{j,j-1}_{i-j,O}\|_{L^2(\Omega,H)}^2 +\Delta \zeta_{\Delta} \|E^{j,j-1}_{i-j,O}\|_{L^2(\Omega,H)}^2\\
			&\hspace{4em}+ \Delta \zeta_{\Delta}  \|E^{j,j-1}_{i-j,O}\|_{L^2(\Omega,H)} \|E^{j-1,0}_{i-j,O}\|_{L^2(\Omega,H)}\\
			&\lesssim  \Delta T  \zeta_{\Delta}+ T\Delta \zeta_{\Delta} \\
			&\quad+ \Delta \left(\sum^{\ulT-1}_{j=1} \sum^{\ulT}_{i = j+1}   \|E^{j,j-1}_{i-j,O}\|_{L^2(\Omega,H)}^2\right) \\
			&\quad+\Delta \zeta_{\Delta} \left(\sum^{\ulT-1}_{j=1} \sum^{\ulT}_{i = j+1}   \|E^{j,j-1}_{i-j,O}\|_{L^2(\Omega,H)}^2\right)\\
			&\hspace{1em}+ \Delta \zeta_{\Delta}  \left(\sum^{\ulT-1}_{j=1} \sum^{\ulT}_{i = j+1}   \|E^{j,j-1}_{i-j,O}\|_{L^2(\Omega,H)}^2\right)^{\frac 12}\left(\sum^{\ulT-1}_{j=1} \sum^{\ulT}_{i = j+1}   \|E^{j-1,0}_{i-j,O}\|_{L^2(\Omega,H)}^2\right)^{\frac 12} \\
			&\lesssim   \Delta T  \zeta_{\Delta}.
		\end{align*}
		This proves the claim for the case $O= I$. We then immediately obtain the claim for the case that $O=P_h$, since this operator is uniformly bounded in $\cL(H)$ and therefore 
		\begin{align*}
			&	\| \left(\mathrm{RV}^{\Delta,P_h}_{T} - \E[\mathrm{RV}^{\Delta,P_h}_{T}] \right)-\left(\mathrm{SARCV}^{\Delta,P_h}_{T} - \E[\mathrm{SARCV}^{\Delta,P_h}_{T}] \right)\|_{L^2(\Omega,\cL_2(H))}^2\\
			\leq  &	\| \left(\mathrm{RV}^{\Delta,I}_{T} - \E[\mathrm{RV}^{\Delta,I}_{T}] \right)-\left(\mathrm{SARCV}^{\Delta,I}_{T} - \E[\mathrm{SARCV}^{\Delta,I}_{T}] \right)\|_{L^2(\Omega,\cL_2(H))}^2.
		\end{align*}
		It now remains to prove the result for $O=I_h$.
		Using the estimates from Lemma \ref{lem: second moments of increments} in the case $O=I_h$ under Assumption~\ref{ass:Ih}, we find
		\begin{align*}
			&	\| \left(\mathrm{RV}^{\Delta,I_h}_{T} - \E[\mathrm{RV}^{\Delta,O}_{T}] \right)-\left(\mathrm{SARCV}^{\Delta,I_h}_{T} - \E[\mathrm{SARCV}^{\Delta,I_h}_{T}] \right)\|_{L^2(\Omega,\cL_2(H))}^2\\
			&\lesssim T\Delta^{\min(1,\beta)} \zeta_{\Delta}.
		\end{align*}
		The proof is complete.
	\end{proof}
	
	\subsubsection{Proof of Theorem~\ref{thm:RV-convergence}}
	Theorem~\ref{thm:RV-convergence}, is a consequence of several lemmata. 
	We start with three lemmata on the stochastic convolution $Y$.
	
	First, we obtain an estimate for the variance of the realized covariation estimator.
	\begin{lemma}
		\label{lem:var-conv}
		Let Assumption \ref{ass: minimal Assumption on A} hold and $X_0 \equiv 0$. If $O \neq I_h$ there is a $C < \infty$ independent of $T$ and $\Delta$, such that 
		\begin{equation*}
			\Var[RV_T^{\Delta,O}]\le C T \Delta.
		\end{equation*}
		If instead $O = I_h$ and Assumption~\ref{ass:Ih} is fulfilled, the bound $C T \Delta$ in the statement is replaced by $C T\Delta^{\min(\beta,1)}$.
	\end{lemma}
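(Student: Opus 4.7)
The plan is to reduce the variance of $RV_T^{\Delta,O}$ to that of the auxiliary estimator $SARCV_T^{\Delta,O}$ from \eqref{eq: SARCV}, whose summands $(\tilde\Delta W_O^i)^{\otimes 2}$ are independent across $i$ since the underlying stochastic integrals are taken over the disjoint intervals $((i-1)\Delta, i\Delta]$. First I would invoke Lemma~\ref{lem: RV-SARCV asympt equiv} together with the elementary inequality $(a+b)^2 \le 2a^2 + 2b^2$ to obtain
\begin{equation*}
\mathrm{Var}[RV_T^{\Delta,O}] \le 2\mathrm{Var}[SARCV_T^{\Delta,O}] + 2\bigl\|(RV_T^{\Delta,O} - \E[RV_T^{\Delta,O}]) - (SARCV_T^{\Delta,O} - \E[SARCV_T^{\Delta,O}])\bigr\|_{L^2(\Omega,\cL_2(H))}^2,
\end{equation*}
in which the second summand is $o(T\Delta)$ when $O \neq I_h$ and $o(T\Delta^{\min(1,\beta)})$ when $O = I_h$. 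It thus suffices to establish the desired bounds for $\mathrm{Var}[SARCV_T^{\Delta,O}]$.

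Second, by independence of the $\tilde\Delta W_O^i$,
\begin{equation*}
\mathrm{Var}[SARCV_T^{\Delta,O}] = \sum_{i=1}^{\ulT} \E\bigl\|(\tilde\Delta W_O^i)^{\otimes 2} - C_i\bigr\|_{\cL_2(H)}^2, \qquad C_i := \E[(\tilde\Delta W_O^i)^{\otimes 2}] = \int_0^{\Delta} O S(r) Q S(r)^* O^* \dd r.
\end{equation*}
Each $\tilde\Delta W_O^i$ is a centered $H$-valued Gaussian. Expanding it via the Karhunen--Lo\`eve representation in the eigenbasis of $C_i$ (so that the coordinates become independent real standard normals) and applying Isserlis' formula coordinate-wise yields the identity
\begin{equation*}
\E\bigl\|(\tilde\Delta W_O^i)^{\otimes 2} - C_i\bigr\|_{\cL_2(H)}^2 = \|C_i\|_{\cL_2(H)}^2 + (\tr\,C_i)^2 \le 2 (\tr\,C_i)^2,
\end{equation*}
the last step using $\|C_i\|_{\cL_2(H)} \le \|C_i\|_{\cL_1(H)} = \tr\,C_i$ for the positive trace-class operator $C_i$.

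Third, $\tr\,C_i = \|\tilde\Delta W_O^i\|_{L^2(\Omega,H)}^2$, and Lemma~\ref{lem: second moments of increments} supplies the required bounds: by \eqref{eq: bound on adjusted increment}, $\tr\,C_i \lesssim \Delta$ when $O = I$, and since $\|P_h\|_{\cL(H)} \le 1$ the same estimate persists for $O = P_h$; under Assumption~\ref{ass:Ih}, \eqref{eq: bound on adjusted increment-I_h case} yields $\tr\,C_i \lesssim \Delta^{\min(1,(1+\beta)/2)}$ when $O = I_h$. Summing over $\ulT \le T/\Delta$ indices produces $\mathrm{Var}[SARCV_T^{\Delta,O}] \lesssim (T/\Delta)\Delta^2 = T\Delta$ in the first case, and $\lesssim (T/\Delta)\Delta^{\min(2,1+\beta)} = T\Delta^{\min(1,\beta)}$ in the second. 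I do not anticipate a serious obstacle; the single point of care is converting the $o(\cdot)$ remainder from Lemma~\ref{lem: RV-SARCV asympt equiv} into a bound with a constant uniform in both $T$ and $\Delta$, which is handled by combining the asymptotic bound for small $\Delta$ with a trivial estimate absorbed into $C$ on the regime where $\Delta$ is bounded away from zero.
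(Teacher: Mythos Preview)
Your proposal is correct and follows essentially the same route as the paper: reduce to $SARCV$ via Lemma~\ref{lem: RV-SARCV asympt equiv}, exploit independence of the $\tilde\Delta W_O^i$ to write the variance as a sum, and bound each summand by a constant times $(\tr C_i)^2=(\|\tilde\Delta W_O^i\|_{L^2(\Omega,H)}^2)^2$. The only cosmetic difference is that the paper obtains the bound on $\E\|(\tilde\Delta W_O^i)^{\otimes 2}-C_i\|_{\cL_2(H)}^2$ via the Gaussian moment comparison $\|Z\|_{L^4(\Omega,H)}\le 3^{1/4}\|Z\|_{L^2(\Omega,H)}$ rather than your exact Isserlis identity, and it computes $\int_0^\Delta\|OS(r)Q^{1/2}\|_{\cL_2(H)}^2\dd r$ directly instead of quoting Lemma~\ref{lem: second moments of increments}; both yield the same bound up to constants.
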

	
	\begin{proof}
		We start with the case that $O\neq I_h$.    By Lemma \ref{lem: RV-SARCV asympt equiv} and  the triangle inequality, we only need to prove that
		$\mathbb E[\|SARCV_{T}^{\Delta,O}-\mathbb E[SARCV_{T}^{\Delta,O}]\|^2]\leq C T \Delta$.  For that, observe that $O(X_{i\Delta}-S(\Delta) X_{(i-1)\Delta}), i=1,...,\ulT$ are i.i.d. and we obtain
		\begin{align*}
			& \mathbb E[\|SARCV_{T}^{\Delta,O}-\mathbb E[SARCV_{T}^{\Delta,O}]\|^2]\\
			= &\sum_{i=1}^{\ulT} \mathbb E\left[\left\|(OX_{i\Delta}-OS(\Delta)X_{(i-1)\Delta})^{\otimes 2}-\mathbb E\left[(OX_{i\Delta}-OS(\Delta)X_{(i-1)\Delta})^{\otimes 2}\right]\right\|^2\right]\\
			\leq &2\sum_{i=1}^{\ulT} \mathbb E\left[\left\|(OX_{i\Delta}-OS(\Delta)X_{(i-1)\Delta})\right\|^4\right].
		\end{align*}
		We use the fact that there is a constant $C< \infty$ such that for any Gaussian $H$-valued random variable $Z\sim N(0,C)$, $\|Z\|_{L^4(\Omega,H)}\le 3^{\frac 14}\|Z\|_{L^2(\Omega,H)}$, such that
		\begin{align*}
			& \mathbb E[\|SARCV_{T}^{\Delta,O}-\mathbb E[SARCV_{T}^{\Delta,O}]\|^2]\\
			\lesssim &\sum_{i=1}^{\ulT} \mathbb E\left[\left\|(OX_{i\Delta}-OS(\Delta)X_{(i-1)\Delta})\right\|^2\right]^2\\
			= &\sum_{i=1}^{\ulT} \left(\int_{0}^{\Delta} \| O S(r) Q^{\frac 1 2} \|_{\cL_2(H)}^2 \dd r \right)^2.\\
		\end{align*}
		If $I\neq I_h$ we can use the fact that $O\in \cL (H)$ to obtain that  $\mathbb E[\|SARCV_{T}^{\Delta,O}-\mathbb E[SARCV_{T}^{\Delta,O}]\|^2]\lesssim T \Delta$. For the case $O=I_h$ we find
		\begin{align*}
			\int_{0}^{\Delta} \| O S(r) Q^{\frac 1 2} \|_{\cL_2(H)}^2 \dd r
			&= \int_{0}^{\Delta} \| O A^{-\frac{d+\epsilon}{4}} A^{\frac{1-\beta}{4}} S(r) A^{\frac{\beta + d + \epsilon - 1}{4}} Q^{\frac 1 2} \|_{\cL_2(H)}^2 \dd r \\
			&\lesssim \int_{0}^{\Delta} \| O A^{-\frac{d+\epsilon}{4}} A^{\frac{1-\beta}{4}} S(r) \|_{\cL_2(H)}^2 \dd r \\
			&= \sum_{i, j = 1}^\infty \int_{0}^{\Delta} \langle A^{\frac{1-\beta}{4}}S(r) e_i, (O A^{-\frac{d+\epsilon}{4}})^*e_j \rangle^2 \dd r \\
			&\lesssim \int_{0}^{\Delta} \|A^{\frac{1-\beta}{4}} S(r) (O A^{-\frac{d+\epsilon}{4}})^* \|_{\cL_2(H)}^2 \dd r \lesssim \Delta^{\min(1,(1+\beta)/2)}.
		\end{align*}
		In the last step, we again made use of Lemma~\ref{lem:obs-hs}. This proves the claim
	\end{proof}
	
	For the proof of the next lemma, we recall that for any bounded operators $\Gamma_0,\Gamma_1$ and $\Gamma_2$ on Hilbert spaces,
	\begin{equation}
		\label{eq:operator-product-difference}
		\Gamma_1 \Gamma_0 \Gamma_1^* - \Gamma_2 \Gamma_0 \Gamma_2^* = \frac{(\Gamma_1 + \Gamma_2) \Gamma_0 (\Gamma_1 -
			\Gamma_2)^* + (\Gamma_1 - \Gamma_2) \Gamma_0 (\Gamma_1 +
			\Gamma_2)^*}{2}.
	\end{equation}
	\begin{lemma}
		\label{lem:time-conv}
		Suppose that Assumption~\ref{ass:rate} is satisfied. Then there is a $C<\infty$ and a real sequence $\zeta_{\Delta}$ converging to $0$ as $\Delta \to 0$ which does not depend on $T$ or $\Delta$ such that
		\begin{enumerate}[label=(\roman*)]
			\item \label{lem:time-conv:1}$\| \sum_{i = 1}^{\ulT} \int^{i\Delta}_{(i-1)\Delta} S(i\Delta-r) Q S(i\Delta-r) - Q \dd r \|_{\cL_2(H)} \le C T \Delta^{\frac{\min(\gamma,2)}{2}}\zeta_{\Delta}$ and 
			\item \label{lem:time-conv:2} $\| \sum_{i = 2}^{\ulT} \E[(E(\Delta)Y_{t_{i-1}})^{\otimes 2}] \|_{\cL_2(H)} \le C T \Delta^{\frac{\min(\gamma,2)}{2}}\zeta_{\Delta}$. 
		\end{enumerate}
	\end{lemma}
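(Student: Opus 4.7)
The plan is to reduce both parts to Hilbert--Schmidt estimates that combine Assumption~\ref{ass:rate}\ref{ass:rate:q} with the smoothing bound $\|E(u)A^{-\gamma/2}\|_{\cL(H)} \lesssim u^{\min(\gamma,2)/2}$ coming from \eqref{eq:semigroup-time-bound}, and then to upgrade the hidden constants to a null sequence $\zeta_\Delta$ via the low/high-frequency projection split already used in the proof of Lemma~\ref{lem: second moments of increments}. For (i), after the change of variables $u = i\Delta - r$ the integrand becomes independent of $i$, so the sum equals $\ulT\int_0^\Delta (S(u)QS(u) - Q)\,du$. The identity $S(u)QS(u) - Q = -E(u)QS(u) - QE(u)$, the self-adjointness of $Q$ and the factorization $E(u)Q = (E(u)A^{-\gamma/2})(A^{\gamma/2}Q)$ give
\begin{equation*}
\|S(u)QS(u) - Q\|_{\cL_2(H)} \;\lesssim\; u^{\min(\gamma,2)/2}\,\|A^{\gamma/2}Q\|_{\cL_2(H)}.
\end{equation*}
Integration over $[0,\Delta]$ followed by $\ulT \le T/\Delta$ produces a bound of order $T\Delta^{\min(\gamma,2)/2}$.

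For (ii), I would first pass to an upper bound in positive operator order: since $\Cov(Y_{t_{i-1}}) \le K_\infty := \int_0^\infty S(s)QS(s)\,ds$, and $K_\infty \in \cL_2(H)$ by the exponential decay \eqref{eq:semigroup-exp-decay},
\begin{equation*}
0 \;\le\; \sum_{i=2}^{\ulT}\E\bigl[(E(\Delta)Y_{t_{i-1}})^{\otimes 2}\bigr] \;\le\; \ulT\,E(\Delta)K_\infty E(\Delta).
\end{equation*}
Monotonicity of the Schatten-$2$ norm on positive operators reduces the task to showing $\|E(\Delta)K_\infty E(\Delta)\|_{\cL_2(H)} \lesssim \Delta^{1+\min(\gamma,2)/2}$. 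Expanding in the eigenbasis $(e_j)$ of $A$ and using the Lyapunov equation $AK_\infty + K_\infty A = Q$ entrywise gives $K_{jk} = Q_{jk}/(\lambda_j+\lambda_k)$, so
\begin{equation*}
\|E(\Delta)K_\infty E(\Delta)\|_{\cL_2(H)}^2 \;=\; \sum_{j,k}\frac{(1-e^{-\lambda_j\Delta})^2(1-e^{-\lambda_k\Delta})^2}{(\lambda_j+\lambda_k)^2}\,|Q_{jk}|^2.
\end{equation*}
Applying the pointwise estimate $(1-e^{-\lambda\Delta})^2 \le (\lambda\Delta)^{(\min(\gamma,2)+2)/2}$ symmetrically and using $\lambda_j+\lambda_k \ge 2\sqrt{\lambda_j\lambda_k}$, the right-hand side is controlled by $\Delta^{\min(\gamma,2)+2}\|A^{\min(\gamma,2)/4}QA^{\min(\gamma,2)/4}\|_{\cL_2(H)}^2/4$. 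This symmetric norm is in turn dominated by $\|A^{\gamma/2}Q\|_{\cL_2(H)}^2$ through the AM-GM bound $\lambda_j^{\gamma/2}\lambda_k^{\gamma/2} \le (\lambda_j^\gamma+\lambda_k^\gamma)/2$ combined with the self-adjointness of $Q$. Multiplication by $\ulT$ then yields the required $T\Delta^{\min(\gamma,2)/2}$ rate.

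To upgrade the resulting $O(\cdot)$ bounds to the sharper $\zeta_\Delta\to 0$ refinement in both statements, I would split $A^{\gamma/2}Q = P_N A^{\gamma/2}Q + (I-P_N)A^{\gamma/2}Q$ with $P_N$ the spectral projection of $A$ onto its first $N$ eigenvectors. The tail is arbitrarily small in $\cL_2(H)$ because $A^{\gamma/2}Q$ is Hilbert--Schmidt, while on $\mathrm{range}(P_N)$ the operator $A$ is bounded by $\lambda_N$, which lets us sharpen $u^{\min(\gamma,2)/2}$ to $u$ (and analogously tighten the eigensum in (ii)) at the price of a $\Delta$-independent constant depending on $\lambda_N$. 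Fixing $\epsilon$, then $N = N(\epsilon)$ large, and finally $\Delta$ small enough completes the extraction of $\zeta_\Delta$.

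The main obstacle I anticipate is the two-sided bound in (ii): a naive one-sided estimate $\|E(\Delta)K_\infty\|_{\cL_2(H)}\|E(\Delta)\|_{\cL(H)}$ only yields $\Delta^{\min(\gamma,2)/2}$, which is short by a full factor of $\Delta$ after multiplication by $\ulT \sim T/\Delta$. The Lyapunov matrix expansion is precisely what allows both copies of $E(\Delta)$ to be exploited simultaneously, without demanding the generally unavailable two-sided regularity $\|A^{\gamma/2}QA^{\gamma/2}\|_{\cL_2(H)}<\infty$.
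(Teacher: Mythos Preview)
Your proposal is correct. Part (i) is essentially the paper's argument: after the change of variables the sum collapses to $\ulT\int_0^\Delta(S(u)QS(u)-Q)\,du$, and the paper uses the same factorisation $E(u)Q=(E(u)A^{-\gamma/2})(A^{\gamma/2}Q)$ together with a $P_N$ split (the paper writes the integrand via the polarisation identity~\eqref{eq:operator-product-difference} rather than your $-E(u)QS(u)-QE(u)$, but this is cosmetic).

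Part (ii) differs in packaging. The paper expands $\E[(E(\Delta)Y_{t_{i-1}})^{\otimes 2}]$ directly in the eigenbasis of $A$, bounds $(1-e^{-(\lambda_j+\lambda_k)(i-1)\Delta})\le 1$ to kill the $i$-dependence, and sums by the triangle inequality; you instead pass to $K_\infty$ via positive-operator ordering and monotonicity of the $\cL_2$ norm, then invoke the Lyapunov identity $\langle K_\infty e_j,e_k\rangle=Q_{jk}/(\lambda_j+\lambda_k)$. These land on the same eigensum---the paper's bound $(1-e^{-(\ldots)(i-1)\Delta})\le 1$ \emph{is} the passage to $K_\infty$---and your symmetric AM--GM estimate and the paper's asymmetric one (using $(1-e^{-\lambda_j\Delta})/(\lambda_j+\lambda_k)\le\Delta$ on one side, $(1-e^{-\lambda_k\Delta})^2\le(\lambda_k\Delta)^{\min(\gamma,2)}$ on the other) both reduce it to $\Delta^{2+\min(\gamma,2)}\|A^{\gamma/2}Q\|_{\cL_2(H)}^2$. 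Your remark that a one-sided bound $\|E(\Delta)K_\infty\|_{\cL_2}\|E(\Delta)\|_{\cL(H)}$ loses a full factor of $\Delta$ after multiplying by $\ulT$ is exactly the obstruction the eigensum computation is designed to bypass, and makes explicit what the paper leaves implicit.

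One small caveat: your sharpening ``from $u^{\min(\gamma,2)/2}$ to $u$'' on $\mathrm{range}(P_N)$ is vacuous when $\gamma\ge 2$, so the $\zeta_\Delta$ extraction (yours and the paper's alike) only genuinely improves the constant to $o(1)$ for $\gamma<2$; this is harmless for the downstream uses of the lemma, which need the strict $o$-improvement only at $\gamma=1$.
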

	
	\begin{proof}[Proof of Lemma~\ref{lem:time-conv}]
		Let $P_N$ denote the projection onto the first $N$ eigenfunctions $e_1,...,e_N$ of $A$. We will use that by Assumption \ref{ass:rate} we have that
		\begin{align}\label{aux-eq 1: Proof of remainder convergence with perfect observations}
			\|(I-P_N) A^{\frac{\gamma}2}Q\|_{\cL_2(H)}^2=\sum_{i=N+1}^{\infty} \|QA^{\frac{\gamma}2}  e_i\|^2 \longrightarrow 0 \quad \text{ as } N\to \infty
		\end{align}
		and for $s\in (0,1), N\in \mathbb N$,
		\begin{align}\label{aux-eq 2: Proof of remainder convergence with perfect observations}
			\Delta^{-\frac 12}\sup_{s\leq \Delta}\|P_N E(s)A^{-\frac{\gamma}2}\|_{\cL(H)}^2\leq  \Delta^{-\frac 12}N\sup_{s\leq \Delta}\sup_{i=1,...,N} (e^{-\lambda_i s}-1) \lambda_i^{\frac{\gamma}2}\longrightarrow 0 \, \text{ as }\Delta 
			\to 0 .
		\end{align} 
		To show the first claim, note first that by~\eqref{eq:operator-product-difference}, the triangle inequality, H\"older's inequality and  \eqref{eq:semigroup-analyticity} with $r=0$,
		\begin{align*}
			&\ulT^{2}\Big\| \int^{\Delta}_0 S(r) Q S(r) - Q \dd r \Big\|^2_{\cL_2(H)} \\ &\lesssim \ulT^{2}\Bigg(\Big\|P_N \int^{\Delta}_0 E(r) Q (S(r) + I) \dd r \Big\|^2_{\cL_2(H)}\\
			&\qquad\qquad\qquad\qquad+\Big\|(I-P_N) \int^{\Delta}_0 E(r) Q (S(r) + I) \dd r \Big\|^2_{\cL_2(H)}\Bigg)  \\
			&\le T^2\Delta^{-1}\Bigg(\int^{\Delta}_0 \| P_N E(r) Q (S(r) + I) \|^2_{\cL_2(H)} \dd r \\
			&\qquad\qquad\qquad\qquad+\int^{\Delta}_0 \Big\|(I-P_N) E(r) Q (S(r) + I) \Big\|^2_{\cL_2(H)}\dd r \Bigg)  \\
			&\lesssim T^2\Delta^{-1} \|A^{\frac \gamma 2} Q \|^2_{\cL_2(H)} \int^{\Delta}_0 \| P_N E(r) A^{-\frac{\gamma}{2}} \|^2_{\cL(H)}  \dd r\\
			&\qquad+T^2\Delta^{-1} \|(I-P_N)A^{\frac \gamma 2} Q \|^2_{\cL_2(H)} \int^{\Delta}_0 \| E(r) A^{-\frac{\gamma}{2}} \|^2_{\cL(H)}  \dd r  \\
			= & \mathrm{I}_{\Delta,N}+\mathrm{II}_{\Delta,N}.
		\end{align*}
		Using \eqref{eq:semigroup-time-bound} and \eqref{aux-eq 1: Proof of remainder convergence with perfect observations}, we find that the second summand converges to $0$ as $N\to \infty$ uniformly in $\Delta$ and by \eqref{aux-eq 2: Proof of remainder convergence with perfect observations} we obtain that the first summand converges to $0$ as $\Delta\to 0$  for each $N\in \mathbb N$. 
		This shows that for fixed $T>0$ and all positive $\epsilon>0$ we  can find an $N_{\epsilon}>0$ (independent of $\Delta$) such that $\sup_{\Delta >0}\mathrm{II}_{\Delta,N_{\epsilon}}\leq  \epsilon/2 $ and an $\Delta_{\epsilon}>0$
		such that  $\mathrm{I}_{\Delta_{\epsilon},N_{\epsilon}}\leq \epsilon/2$. Hence, we find that
		\begin{align*}
			\lfloor T/\Delta_{\epsilon}\rfloor\Big\| \int^{\Delta_{\epsilon}}_0 S(r) Q S(r) - Q \dd r \Big\|^2_{\cL_2(H)}\lesssim  \sup_{\Delta>0} \mathrm{I}_{\Delta_{\epsilon},N_{\epsilon}}+\mathrm{II}_{\Delta_{\epsilon},N_{\epsilon}}\leq \epsilon.
		\end{align*}
		This proves (i).
		
		For the claim~\ref{lem:time-conv:2},we first expand $\E[(E(\Delta)Y_{(i-1)\Delta})^{\otimes 2}]$ on the eigenbasis of $A$ to see that for $k \in \N$
		\begin{align*}
			&\| \E[(E(\Delta)Y_{(i-1)\Delta})^{\otimes 2}] e_k \|^2 \\
			&\quad= \Big\| \int^{(i-1)\Delta}_0 E(\Delta) S(r) Q S(r) E(\Delta) e_k \dd r \Big\|^2 \\ 
			&\quad= \sum_{j = 1}^{\infty} (1-e^{-\lambda_j (\Delta)})^2 (1-e^{-\lambda_k (\Delta)})^2 |\langle Q e_k , e_j \rangle|^2 \Big| \int^{(i-1)\Delta}_0 e^{-(\lambda_j+\lambda_k)r} \dd r\Big|^2 \\ 
			&\quad= \sum_{j = 1}^{\infty} \frac{(1-e^{-\lambda_j (\Delta)})^2 (1-e^{-\lambda_k (\Delta)})^2(1 - e^{-(\lambda_j + \lambda_k)(i-1)\Delta})^2}{\lambda_k^{\gamma}(\lambda_j + \lambda_k)^2} |\langle QA^{\frac \gamma 2}e_k , e_j \rangle|^2.
		\end{align*}
		Since $(1-e^{-x(\Delta)})/x \le \Delta$ for $x \in (0,\infty)$ and $(1 - e^{-(\lambda_j + \lambda_k)(i-1)\Delta}) \le 1$, we obtain 
		\begin{align*}
			&\big\| \E[(E(\Delta)Y_{(i-1)\Delta})^{\otimes 2}] \big\|_{\cL_2(H)}^2 \\
			= &  \big\| \E[P_N E(\Delta)Y_{(i-1)\Delta}^{\otimes 2}] \big\|_{\cL_2(H)}^2+\big\| \E[(I-P_N)(E(\Delta)Y_{(i-1)\Delta})^{\otimes 2}] \big\|_{\cL_2(H)}^2 \\
			&= \sum_{k=1}^{N} \| \E[(E(\Delta)Y_{(i-1)\Delta})^{\otimes 2}] e_k \|^2+\sum_{k=N+1}^\infty \| \E[(E(\Delta)Y_{(i-1)\Delta})^{\otimes 2}] e_k \|^2 \\
			&\le \sum_{k=1}^{N} \| \E[Y_{(i-1)\Delta}^{\otimes 2}]\|_{\cL(H)}^2\|(E(\Delta)e_k \|^2+ \Delta^{\min(2+\gamma,4)} \| (I-P_N)A^{\frac \gamma 2}Q\|_{\cL_2(H)}^2\\
			& \le T\| Q\|_{\cL_2(H)} \sum_{k=1}^{N}\|(E(\Delta)e_k \|^2+ \Delta^{\min(2+\gamma,4)} \| (I-P_N)A^{\frac \gamma 2}Q\|_{\cL_2(H)}^2.
		\end{align*}
		The first summand converges to $0$ for $N$ fixed and as $\Delta\to 0$, whereas the second summand converges to $0$ as $N\to \infty$ uniformly in $\Delta$.
		An appeal to the triangle inequality completes the proof. 
	\end{proof}
	\begin{remark}
		Modifying the condition in Assumption~\ref{ass:Ph} to $\| A^{\gamma/2} Q A^{\tilde \gamma/2}\|_{\cL_2(H)} < \infty$ for some $\tilde \gamma \ge 0$, the bound in claim~\ref{lem:time-conv:2} can be improved to $C T \Delta^{\frac{\min(\gamma+\tilde \gamma,2)}{2}}$. This would, however, not affect our main result. 
	\end{remark}
	
	Lemmata~\ref{lem:var-conv} and~\ref{lem:time-conv} suffice to derive convergence rates in the case that $O = I$. If this is not so, we need an additional lemma handling the convergence in space. 
	
	\begin{lemma}
		\label{lem:space-conv}
		Suppose that $O = P_h$ and that Assumption~\ref{ass:Ph} is satisfied. Then there is a $C<\infty$ that does not depend on $T, h, \Delta $ or $i = 1, \ldots, \ulT$ such that
		\begin{enumerate}[label=(\roman*)]
			\item \label{lem:space-conv:1} $\|\E[(\tilde \Delta W^i_O)^{\otimes 2}] - \E[(\tilde \Delta W^i_I)^{\otimes 2}] \|_{\cL_2(H)} \le C \Delta h^{\min(\gamma,1)}$ and
			\item \label{lem:space-conv:2} $\|\E[(OE(\Delta)Y_{(i-1)\Delta})^{\otimes 2} - (E(\Delta)Y_{(i-1)\Delta})^{\otimes 2}] \|_{\cL_2(H)} \le C \Delta h^{\min(\gamma,1)}$.
		\end{enumerate}
		
		Suppose instead that $O = I_h$ and that Assumption~\ref{ass:Ih} is satisfied. Let $\psi \in (d/2,1+\zeta)$ and write $\tilde \epsilon = \max(\psi-\zeta,0)+\max(d/2-\eta+\epsilon,0)$ for $ \epsilon  > 0$. Then, for sufficiently small $\epsilon>0$, claim \ref{lem:space-conv:1} holds with the bound $\Delta h^{\min(\gamma,1)}$ replaced by $\Delta^{\frac{2-\tilde \epsilon}{2}} h^{\min(\psi,2)}$ and claim \ref{lem:space-conv:2} holds with $\Delta h^{\min(\gamma,1)}$ replaced by $\Delta^{\frac{2-\tilde \epsilon}{2}} h^{\min(\psi,2)}$.
	\end{lemma}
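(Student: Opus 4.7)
The strategy is to apply the algebraic identity \eqref{eq:operator-product-difference} together with the bounds $\|(O-I) A^{-r/2}\|_{\cL(H)} \lesssim h^{\min(r,1)}$ (for $O=P_h$) and $\lesssim h^{\min(r,2)}$ (for $O=I_h$, with $r>d/2$) supplied by Lemma~\ref{lem:obs-error} under Assumption~\ref{ass:Ph} or~\ref{ass:Ih}, combined with the Hilbert--Schmidt regularity $\|A^{\gamma/2} Q\|_{\cL_2(H)}<\infty$ from Assumption~\ref{ass:rate}\ref{ass:rate:q}. For claim~\ref{lem:space-conv:1} with $O=P_h$, It\^o's isometry yields the difference as $\int_0^\Delta (O S(r) Q S(r) O^* - S(r) Q S(r))\dd r$. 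Applying \eqref{eq:operator-product-difference} to the integrand, taking the Hilbert--Schmidt transpose, and commuting $A^{\gamma/2}$ past $S(r)$, I reduce each integrand to a product $\|(O-I) A^{-\gamma/2}\|_{\cL(H)}\cdot\|A^{\gamma/2} Q\|_{\cL_2(H)}\cdot\|O+I\|_{\cL(H)} \lesssim h^{\min(\gamma,1)}$ uniformly in $r$; integrating supplies the extra factor $\Delta$.

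For claim~\ref{lem:space-conv:2} with $O=P_h$, I first write $C := \mathrm{Cov}(Y_{(i-1)\Delta}) = \int_0^{(i-1)\Delta} S(s) Q S(s)\dd s$ and apply \eqref{eq:operator-product-difference} again to reduce the task to bounding $\|E(\Delta) C E(\Delta)(O-I)^*\|_{\cL_2}$. Pulling the $s$-integral outside the norm and repeating the factorization of claim~(i) on each integrand (transposing, extracting $(O-I)A^{-\gamma/2}$, and commuting $A^{\gamma/2}$ through the semigroup), each integrand is bounded by $C\, h^{\min(\gamma,1)}\,\|A^{\gamma/2} Q\|_{\cL_2(H)}\,\|E(\Delta)S(s)\|_{\cL(H)}^2$. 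The $\Delta$-factor then follows from the uniform spectral bound
\begin{equation*}
\int_0^\infty \|E(\Delta) S(s)\|_{\cL(H)}^2 \dd s \;\lesssim\; \Delta,
\end{equation*}
which I obtain by noting $\|E(\Delta) S(s)\|_{\cL(H)} = \sup_{\lambda \ge \lambda_1}(1-e^{-\lambda\Delta}) e^{-\lambda s} \le \min(1,C\Delta/s)$ and splitting the integral at $s=\Delta$. Crucially this uses no regularity of $Q$ beyond Assumption~\ref{ass:rate}\ref{ass:rate:q}.

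The $O=I_h$ case follows the same blueprint but is considerably more technical and constitutes the main obstacle. Three complications must be resolved in parallel. First, $I_h$ is not self-adjoint on $H$, so taking adjoints produces genuinely distinct terms, each requiring separate control. Second, Assumption~\ref{ass:Ih} provides only operator-norm estimates for $Q$, so the Hilbert--Schmidt norm of the middle factor must be supplied through Lemma~\ref{lem:obs-hs}'s bound $\|I_h\|_{\cL_2(\dot{H}^r,H^\epsilon)} < \infty$ for $r > d/2$. Third, the available $A$-regularity must be redistributed among the indices $\psi$, $\zeta$, $\eta$, $\beta$ of Assumption~\ref{ass:Ih}, and the penalty $\tilde\epsilon = \max(\psi-\zeta,0)+\max(d/2-\eta+\epsilon,0)$ precisely quantifies this rebalancing: when $\psi>\zeta$ one must borrow extra $A$-smoothing from the semigroup factor $E(\Delta) A^{-r/2}$ at cost $\Delta^{-\max(\psi-\zeta,0)/2}$ (since lowering $r$ in the bound $\|E(\Delta) A^{-r/2}\|_{\cL(H)}\lesssim\Delta^{r/2}$ sacrifices $\Delta$-decay), and analogously when $\eta<d/2+\epsilon$ the HS slack demanded by Lemma~\ref{lem:obs-hs} eats into the $\Delta$ budget. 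With these adjustments the structure is identical to the $P_h$ argument, with $h^{\min(\gamma,1)}$ replaced by $h^{\min(\psi,2)}$ and the $\Delta$-factor in claims (i)--(ii) degraded to $\Delta^{(2-\tilde\epsilon)/2}$.
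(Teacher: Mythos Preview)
Your plan is essentially correct and matches the paper's approach closely, with one genuine methodological difference worth noting.

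For $O=P_h$, claim~\ref{lem:space-conv:1}, your argument is the paper's. For claim~\ref{lem:space-conv:2}, however, the paper takes a different route: it computes the closed-form spectral expansion
\[
\Big\| \int_0^{(i-1)\Delta} E(\Delta)S(r)QS(r)E(\Delta)\,\dd r \Big\|_{\cL_2(H,\dot{H}^\gamma)}^2
= \sum_{j,k} \frac{(1-e^{-\lambda_j\Delta})^2(1-e^{-\lambda_k\Delta})^2(1-e^{-(\lambda_j+\lambda_k)(i-1)\Delta})^2}{(\lambda_j+\lambda_k)^2}\,|\langle A^{\gamma/2}Qe_k,e_j\rangle|^2
\]
and bounds it by $\Delta^2\|A^{\gamma/2}Q\|_{\cL_2(H)}^2$ using $(1-e^{-x})/x\le 1$, then factors out $(P_h-I)$. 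Your alternative---pulling the $s$-integral through the norm and using the scalar estimate $\int_0^\infty\|E(\Delta)S(s)\|_{\cL(H)}^2\,\dd s\lesssim\Delta$---is more elementary and avoids the eigenbasis computation entirely; the paper's route, on the other hand, yields the sharper intermediate bound on $\|E(\Delta)CE(\Delta)\|_{\cL_2(H,\dot{H}^\gamma)}$ in one stroke.

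For $O=I_h$ your outline captures the right ingredients (Lemma~\ref{lem:obs-hs} for the Hilbert--Schmidt budget on $(I_h+I)$, Assumption~\ref{ass:Ih}\ref{ass:Ih:q:2} for $Q$ in operator norm, and the $\tilde\epsilon$ penalty from semigroup smoothing). One small imprecision: in claim~\ref{lem:space-conv:1} there is no $E(\Delta)$ factor, so the borrowing of $A^{(\psi-\zeta)/2}$ and $A^{(d/2+\epsilon-\eta)/2}$ comes from the analyticity bound \eqref{eq:semigroup-analyticity} on $S(r)$ over $r\in[0,\Delta]$, not from $E(\Delta)A^{-r/2}$ as you wrote; integrating $r^{-\tilde\epsilon/2}$ over $[0,\Delta]$ is what produces $\Delta^{(2-\tilde\epsilon)/2}$. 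For claim~\ref{lem:space-conv:2} the paper does use $E(\Delta)$ to supply the $\Delta$-factor, splitting each side as $\|E(\Delta)A^{-(1-a)/2}\|_{\cL(H)}\|A^{1/2}S(r)\|_{\cL(H)}\lesssim\Delta^{(1-a)/2}r^{-1/2}$ (with a small $\epsilon$-slack on one side) so that the $r$-integrand is $r^{(\epsilon-2)/2}$, barely integrable near zero; the exponential decay \eqref{eq:semigroup-exp-decay} handles the tail. Your $P_h$-style integral bound can be adapted here too, but you must upgrade it to control $\int_0^\infty\|A^{a_1}E(\Delta)S(s)\|_{\cL(H)}\|A^{a_2}E(\Delta)S(s)\|_{\cL(H)}\,\dd s$ with $a_1+a_2=\tilde\epsilon/2$, splitting at $s=\Delta$; this works and gives the same $\Delta^{1-\tilde\epsilon/2}$.
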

	\begin{proof}
		We first derive claim \ref{lem:space-conv:1}, starting with the case that $O = P_h$. First note that by~\eqref{eq:operator-product-difference} and  \eqref{eq:semigroup-analyticity} with $r=0$,
		\begin{align*}
			&\|\E[(\tilde \Delta W^i_{P_h})^{\otimes 2}] - \E[(\tilde \Delta W^i_I)^{\otimes 2}] \|_{\cL_2(H)} \\
			&\hspace{0.3em}= \Big\| \int_{0}^{\Delta} P_h S(r) Q S(r) P_h - S(r) Q S(r) \dd r \Big\|_{\cL_2(H)}  \\
			&\hspace{0.3em}\le \int_{0}^{t_i-t_{i-1}} \| (P_h - I) S(r) Q S(r) (P_h + I) \|_{\cL_2(H)} \dd r  \\
			&\hspace{0.3em}\le \int_{0}^{t_i-t_{i-1}} \| (P_h - I) \|_{\cL(\dot{H}^\gamma,H)} \| S(r) \|_{\cL(H)}^2 \| Q \|_{\cL_2(H,\dot{H}^\gamma)} \| P_h + I \|_{\cL(H)} \dd r,
		\end{align*}
		which by Lemma~\ref{lem:obs-error} is bounded by a constant times $h^{\min(\gamma,1)} \Delta$.
		If instead $O = I_h$ and Assumption~\ref{ass:Ih} is satisfied, we note that
		\begin{align*}
			&\|\E[(\tilde \Delta W^i_{I_h})^{\otimes 2}] - \E[(\tilde \Delta W^i_I)^{\otimes 2}] \|_{\cL_2(H)} \\
			&\quad\le \int_{0}^{\Delta} \| (I_h - I) S(r) Q ((I_h + I)S(r))^* \|_{\cL_2(H)} \dd r \\
			&\quad= \int_{0}^{\Delta} \| (I_h - I) S(r) A^{-\frac \zeta 2} A^{\frac \zeta 2} Q A^{\frac \eta 2} ((I_h + I)S(r)A^{-\frac \eta 2})^* \|_{\cL_2(H)} \dd r \\
			&\quad\lesssim \int_{0}^{\Delta} \| (I_h - I) S(r) \|_{\cL(\dot{H}^\zeta,H)} \| Q \|_{\cL(\dot{H}^{-\eta},\dot{H}^\zeta)} \|(I_h + I)S(r)\|_{\cL_2(\dot{H}^\eta,H)} \dd r.
		\end{align*}
		Using \eqref{eq:semigroup-analyticity} and Lemma~\ref{lem:obs-hs}, we obtain  	
		\begin{equation}\label{eq:semigroup-observation operator regularity}
			\begin{split}
				\|(I_h + I)S(r)\|_{\cL_2(\dot{H}^\eta,H)} &= \|(I_h + I)A^{-\frac{d/2+\epsilon}{2}} S(r) A^{\frac{d/2+\epsilon-\eta}{2}} \|_{\cL_2(H)} \\ &\le \|I_h + I \|_{\cL_2(\dot{H}^{d/2+\epsilon,H})} \| S(r) A^{\frac{d/2+\epsilon-\eta}{2}} \|_{\cL(H)} \\
				&\lesssim r^{-\frac{\max(d/2-\eta+\epsilon,0)}{2}},
			\end{split}
		\end{equation}
		while Lemma~\ref{lem:obs-error} ensures that
		\begin{align*}
			\| (I_h - I) S(r) \|_{\cL(\dot{H}^r,H)} \le &\| (I_h - I) \|_{\cL(\dot{H}^\psi,H)} \| S(r) A^{\frac{\psi-\zeta}{2}} \|_{\cL(H)}\\
			\lesssim & h^{\min(\psi,2)} r^{-\frac{\max(0,\psi-\zeta)}{2}}.
		\end{align*}
		This then yields, since $\tilde \epsilon \le d/2-\eta+\epsilon+\psi-\zeta < 2$,
		\begin{equation*}
			\|(I_h + I)S(r)\|_{\cL_2(\dot{H}^s,H)} \lesssim h^{\min(\psi,2)} \int_{0}^{t_i-t_{i-1}} r^{-\frac{\tilde \epsilon}{ 2}} \dd r \lesssim \Delta^{\frac{2-\tilde \epsilon}{2}} h^{\min(\psi,2)},
		\end{equation*}
		which finishes the proof of claim~\ref{lem:space-conv:1}.
		
		To see that~\ref{lem:space-conv:2} holds true when $O =P_h$ we note, similar to the proof of Lemma~\ref{lem:time-conv}, that
		\begin{align*}
			&\Big\| \int^{(i-1)\Delta}_0 E(\Delta) S(r) Q S(r) E(\Delta) \dd r \Big\|_{\cL_2(H,\dot{H}^\gamma)}^2 \\
			&\quad= \sum_{j,k = 1}^{\infty} \frac{(1-e^{-\lambda_j (\Delta)}) (1-e^{-\lambda_k (\Delta)})(1 - e^{-(\lambda_j + \lambda_k)(i-1)\Delta})}{(\lambda_j + \lambda_k)^2} |\langle A^{\frac \gamma 2}Qe_k , e_j \rangle|^2,
		\end{align*}
		which can be bounded by $\Delta^2 \| Q \|_{\cL_2(H,\dot{H}^\gamma)}^2$. By this fact and \eqref{eq:operator-product-difference},
		\begin{align*}
			&\|\E[(P_hE(\Delta)Y_{(i-1)\Delta})^{\otimes 2} - (E(\Delta)Y_{(i-1)\Delta})^{\otimes 2}] \|_{\cL_2(H)} \\ 
			&\quad= \|P_h\E[(E(\Delta)Y_{(i-1)\Delta})^{\otimes 2}]P_h - \E[(E(\Delta)Y_{(i-1)\Delta})^{\otimes 2}] \|_{\cL_2(H)} \\ 
			&\quad\le \Big\| (P_h - I) \int^{(i-1)\Delta}_0 E(\Delta) S(r) Q S(r) E(\Delta) \dd r (P_h + I) \Big\|_{\cL_2(H)} \\ 
			&\quad\le \|P_h - I\|_{\cL(\dot{H}^\gamma,H)} \Big\|\int^{(i-1)\Delta}_0 E(\Delta) S(r) Q S(r) E(\Delta) \dd r \Big\|_{\cL_2(H,\dot{H}^\gamma)},
		\end{align*}
		which by Lemma~\ref{lem:obs-error} is bounded by a constant times $\Delta h^{\min(\gamma,1)}$. If instead $O = I_h$ and Assumption~\ref{ass:Ih} is satisfied, we first note that 
		\begin{equation}
			\label{eq:lem:space-conv:pfeq1}
			\begin{split}				
				&\|\E[(I_hE(\Delta)Y_{(i-1)\Delta})^{\otimes 2} - (E(\Delta)Y_{(i-1)\Delta})^{\otimes 2}] \|_{\cL_2(H)} \\ 
				&\quad\le \int^{(i-1)\Delta}_0 \big\| (I-I_h)E(\Delta) S(r) Q ((I + I_h) E(\Delta) S(r))^* \dd r \big\|_{\cL_2(H)} \dd r. 
			\end{split}
		\end{equation}
		By  \eqref{eq:semigroup-time-bound}, \eqref{eq:semigroup-analyticity} and Lemma~\ref{lem:obs-hs}, we obtain for $(i-1)\Delta \le 1$
		\begin{align*}
			&\|(I_h + I) E(\Delta) S(r)\|_{\cL_2(\dot{H}^s,H)} \\
			&\quad\le \|(I_h + I)A^{-\frac{d/2+2\epsilon}{2}} E(\Delta) A^{-\frac{1-(d/2+\epsilon-\eta)}{2}} S(r) A^{\frac{1-\epsilon}{2}} \|_{\cL_2(H)} \\
			&\quad\le \|I_h + I \|_{\cL_2(\dot{H}^{d/2+2\epsilon},H)} \| E(\Delta) A^{-\frac{1-(d/2+\epsilon-\eta)}{2}} \|_{\cL(H)} \| S(r) A^{\frac{1-\epsilon}{2}} \|_{\cL(H)} \\
			&\quad\lesssim (\Delta)^{\frac{1-\max(d/2-\eta+\epsilon,0)}{2}} r^{\frac{\epsilon-1}{2}},
		\end{align*}
		while using Lemma~\ref{lem:obs-error}  in a similar argument yields
		\begin{align*}
			&\|(I_h - I) E(\Delta) S(r)\|_{\cL(\dot{H}^\zeta,H)} \\
			&\quad\le \|I_h - I \|_{\cL(\dot{H}^{\psi},H)} \| E(\Delta) A^{-\frac{1-(\psi-\zeta)}{2}} \|_{\cL(H)} \| S(r) A^{\frac{1}{2}} \|_{\cL(H)} \\
			&\quad\lesssim h^{\min(\psi,2)}  (\Delta)^{\frac{1-\max(\psi-\zeta,0)}{2}} r^{-\frac{1}{2}}.
		\end{align*}
		For $(i-1)\Delta > 1$ we obtain similar estimates along with the factor $e^{-\lambda_1 r/2}$ stemming from~\eqref{eq:semigroup-exp-decay}. Plugging these estimates into~\eqref{eq:lem:space-conv:pfeq1}, using also the fact that 
		\begin{align*}
			&(I-I_h)E(\Delta) S(r) Q ((I + I_h) E(\Delta) S(r))^* \\
			&\quad= (I-I_h)E(\Delta) S(r) A^{-\frac r 2}A^{\frac r 2}Q A^{\frac s 2} ((I + I_h) E(\Delta) S(r)A^{-\frac s 2})^*,
		\end{align*}
		and Assumption~\ref{ass:Ih}, yields
		\begin{align*}
			&\|\E[(I_hE(\Delta)Y_{(i-1)\Delta})^{\otimes 2} - (E(\Delta)Y_{t_{i-1}})^{\otimes 2}] \|_{\cL_2(H)} \\
			&\quad\lesssim  (\Delta)^{\frac{2-\tilde \epsilon}{2}} h^{\min(\psi,2)}  \Big(\int^{1}_0 r^{\frac{\epsilon-2}{2}} \dd r + \int^{(i-1)\Delta}_1 e^{-\lambda_i r} \dd r\Big)\\
			&\quad\lesssim \Delta^{\frac{2-\tilde \epsilon}{2}} h^{\min(\psi,2)} ,
		\end{align*}
		which completes the proof.
	\end{proof}
	Next, we derive a lemma related to the initial value $X_0$.
	\begin{lemma}
		\label{lem:X0}
		Let Assumption \ref{ass:rate} hold.
		\begin{enumerate}[label=(\Alph*)]
			\item If $O \neq I_h$ and Assumption \ref{ass:Ph} are satisfied and $\iota < 1$, then there exists a bounded random univariate sequence $\zeta_{\Delta}$ converging to $0$ in mean square as $\Delta \to 0$ such that
			\[
			\sum_{i=1}^{\ulT} \|OE(\Delta)S((i-1)\Delta)X_0\|^2 \le \Delta^{\iota} \zeta_{\Delta}.
			\]
			If instead $\iota \ge 1$, the bound is replaced by $\Delta \| X_0 \|^2_{\dot{H}^{\iota}}$
			\item If $O=I_h$ and Assumption \ref{ass:Ih} is satisfied, then there is deterministic constant $C < \infty$ such that 
			\[
			\sum_{i=1}^{\ulT} \|OE(\Delta)S((i-1)\Delta)X_0\|^2 \le C (\Delta^{\min(\iota,1)}+h^{\min(2\iota,4)}) \| X_0 \|^2_{\dot{H}^{\iota}}
			\]
			almost surely.
		\end{enumerate}
	\end{lemma}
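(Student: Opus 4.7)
The whole proof is driven by the diagonal action of $E(\Delta)$ and $S((i-1)\Delta)$ in the eigenbasis $(e_j)_{j\ge 1}$ of $A$. For part~(A), the uniform $\cL(H)$-boundedness of $O\in\{I,P_h\}$ reduces the claim to a bound on $\sum_{i=1}^{\ulT}\|E(\Delta)S((i-1)\Delta)X_0\|^2$. Inserting the spectral expansion of $X_0$ and summing the resulting geometric series in $i$, this quantity equals
\begin{equation*}
\sum_{j=1}^\infty c_j(\Delta,T)\,|\langle X_0, e_j\rangle|^2, \qquad c_j(\Delta,T) := (1-e^{-\lambda_j\Delta})^2\,\frac{1-e^{-2\lambda_j\Delta\ulT}}{1-e^{-2\lambda_j\Delta}}.
\end{equation*}
The chain of elementary inequalities $c_j\le 1-e^{-\lambda_j\Delta}\le\min(\lambda_j\Delta,1)\le(\lambda_j\Delta)^{\min(\iota,1)}$ yields the deterministic pathwise bound $\Delta^{\min(\iota,1)}\|X_0\|^2_{\dot{H}^\iota}$, which is precisely the assertion in the $\iota\ge 1$ branch of part~(A).

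For $\iota<1$ one must additionally extract a vanishing prefactor, and this is the only nontrivial step of the proof. The key observation is that for every fixed $j$, $\Delta^{-\iota}c_j(\Delta,T)\sim\lambda_j\,\Delta^{1-\iota}(1-e^{-2\lambda_j T})/2\to 0$ as $\Delta\downarrow 0$, while simultaneously $\Delta^{-\iota}c_j\le\lambda_j^\iota$ uniformly in $\Delta$; dominated convergence on $(\N,\#)$ then gives $\Delta^{-\iota}\sum_j c_j|\langle X_0(\omega),e_j\rangle|^2\to 0$ for every $\omega$ with $\|X_0(\omega)\|_{\dot{H}^\iota}<\infty$, i.e.\ almost surely. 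Defining $\zeta_\Delta$ as $\Delta^{-\iota}$ times the left-hand side in~(A), one has the $\Delta$-uniform envelope $0\le\zeta_\Delta\lesssim\|X_0\|^2_{\dot{H}^\iota}$, which lies in $L^2(\Omega)$ thanks to the $L^4$-moment hypothesis in Assumption~\ref{ass:rate}\ref{ass:rate:initial}. A second application of Lebesgue dominated convergence, now on $\Omega$, upgrades the almost sure convergence to mean-square convergence $\zeta_\Delta\to 0$. This two-step domination---first in $j$ against $\lambda_j^\iota|\langle X_0,e_j\rangle|^2$, then in $\omega$ against $\|X_0\|^2_{\dot{H}^\iota}\in L^2(\Omega)$---is the principal technical obstacle.

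Part~(B) is handled by splitting $I_h=I+(I_h-I)$, applying the triangle inequality and treating the two pieces separately. The $I$ piece is bounded deterministically by $C\Delta^{\min(\iota,1)}\|X_0\|^2_{\dot{H}^\iota}$ as above. For the approximation-error piece, Lemma~\ref{lem:obs-error} delivers $\|I_h-I\|_{\cL(H^r,H)}\le Ch^{\min(r,2)}$ for any $r>d/2$, while Assumption~\ref{ass:Ih}\ref{ass:Ih:ell-reg} combined with interpolation between $\dot{H}^0=H$ and $\dot{H}^2\hookrightarrow H^2$ supplies $\dot{H}^r\hookrightarrow H^r$ for every $r\in[0,2]$. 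Taking $r=\min(\iota,2)$ (admissible once $\iota>d/2$, which is the regime in which this branch is actually invoked) and using the pathwise spectral bound $c_j\le 1$ in the identity above yields
\begin{equation*}
\sum_{i=1}^{\ulT}\|(I_h-I)E(\Delta)S((i-1)\Delta)X_0\|^2 \le Ch^{2\min(\iota,2)}\sum_{j=1}^\infty\lambda_j^{\min(\iota,2)}|\langle X_0,e_j\rangle|^2 \le Ch^{\min(2\iota,4)}\|X_0\|^2_{\dot{H}^\iota},
\end{equation*}
where the last step uses the embedding $\dot{H}^\iota\hookrightarrow\dot{H}^{\min(\iota,2)}$. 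Summing the two contributions completes part~(B).
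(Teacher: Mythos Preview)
Your proof is correct and follows essentially the same route as the paper: both arguments rest on the spectral identity $\sum_{i=1}^{\ulT}\|E(\Delta)S((i-1)\Delta)X_0\|^2=\sum_j c_j(\Delta,T)|\langle X_0,e_j\rangle|^2$ obtained from the geometric sum, the elementary bound $c_j\le 1-e^{-\lambda_j\Delta}$, and (for part~(B)) the split $I_h=I+(I_h-I)$ together with Lemma~\ref{lem:obs-error} applied with $r=\min(\iota,2)$.

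The only genuine difference is in how the vanishing of $\zeta_\Delta$ is established when $\iota<1$. The paper uses a $P_N$-truncation (an explicit $\epsilon/2$ argument): the tail $\sum_{k>N}$ is controlled uniformly in $\Delta$ by $\|A^{\iota/2}(I-P_N)X_0\|^2$, while the finite head vanishes as $\Delta\to 0$ for each fixed $N$. You instead invoke dominated convergence twice---first on $(\N,\#)$ with the majorant $\lambda_j^\iota|\langle X_0,e_j\rangle|^2$, then on $\Omega$ with the envelope $\|X_0\|^2_{\dot{H}^\iota}\in L^2(\Omega)$. These are equivalent in spirit (the truncation argument is dominated convergence done by hand), and your version has the mild advantage of making the $L^2(\Omega)$-convergence and the role of the $L^4$-moment condition in Assumption~\ref{ass:rate}\ref{ass:rate:initial} fully explicit, which the paper's proof leaves implicit. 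Your observation that part~(B) tacitly requires $\iota>d/2$ for Lemma~\ref{lem:obs-error} to apply with $r=\min(\iota,2)$ is also correct; the paper's proof has the same unstated restriction, which is harmless since every invocation of the lemma in the sequel carries that hypothesis.
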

	
	\begin{proof}			
		Applying Lemma~\ref{lem:obs-error}, we have with $r=\min(\iota,2)$,
		\begin{align*}
			&\sum_{i=1}^{\ulT} \| (I - I_h) E(\Delta) S((i-1)\Delta) X_0\|^2\\
			&\lesssim h^{2 r} \sum_{i=1}^{\ulT} \| A^{\frac r 2} E(\Delta ) S((i-1)\Delta) X_0\|^2 \\
			&= h^{2 r}\sum_{k=1}^\infty \sum_{i=1}^{\ulT} e^{-2 \lambda_k (i-1)\Delta} (1-e^{-\Delta   \lambda_k })^2| \langle A^{\frac r 2} X_0, e_k \rangle|^2 \\
			&= h^{2 r}\sum_{k=1}^\infty (1-e^{-\lambda_k T })(1-e^{-\lambda_k \Delta }) |\langle A^{\frac r 2} X_0, e_k \rangle|^2 \\
			&\le h^{2 r}\| E(\Delta )^{1/2} A^{\frac r 2} X_0 \|^2,
		\end{align*}
		where we in the third step made use of the formula for geometric sums.
				
		In the same way, if $O \neq I_h$, the bound   
		\begin{equation*}
			\sum_{i=1}^{\ulT} \| O E(\Delta) S((i-1)\Delta) X_0\|^2 \le \| E(\Delta )^{1/2} X_0 \|^2
		\end{equation*}
		is obtained. Combining these results yields the second claim. Let $P_N:= \sum_{k=1}^N e_k^{\otimes 2}$ be the projection onto the subspace spanned by the first $N$ eigenfunctions of $A$. We set
		\begin{align*}
			\zeta_\Delta := \Delta^{-\iota} \sum_{k=1}^N (1-e^{-\lambda_k\Delta}) |\langle X_0, e_k \rangle|^2 + \Delta^{-\iota} \sum_{k=N+1}^\infty (1-e^{-\lambda_k\Delta}) |\langle X_0, e_k \rangle|^2.
		\end{align*}
		Since the first term tends to $0$ for fixed $N$ and the second term tends to $0$ as $N\to \infty$ uniformly in $\Delta$, the result follows.
	\end{proof}
	
	We are now ready to prove Theorem \ref{thm:RV-convergence}.
	
	\begin{proof}[Proof of Theorem \ref{thm:RV-convergence}]
		We start by noting that, since $Y_{(i-1)\Delta}$ and $\tilde \Delta W^i_O$ are independent,
		\begin{align*}
			&(O X_{i\Delta} - O X_{(i-1)\Delta})^{\otimes 2} - \Delta Q \\
			&\quad= (O E(\Delta) S((i-1)\Delta) X_0)^{\otimes 2}\\
			&\qquad+O E(\Delta) S((i-1)\Delta) X_0\otimes(O Y_{i\Delta} - O Y_{(i-1)\Delta})\\
			&\qquad+(O Y_{i\Delta} - O Y_{(i-1)\Delta})\otimes O E(\Delta ) S((i-1)\Delta) X_0\\
			&\qquad+ 
			(O Y_{i\Delta} - O Y_{(i-1)\Delta})^{\otimes 2} - \E[(O Y_{i\Delta} - O Y_{(i-1)\Delta})^{\otimes 2}] \\
			&\qquad+ \E[(OE(\Delta)Y_{(i-1)\Delta})^{\otimes 2} - (E(\Delta)Y_{(i-1)\Delta})^{\otimes 2}]\\
			&\qquad+ \E[(E(\Delta)Y_{(i-1)\Delta})^{\otimes 2}]\\
			&\qquad+\E[(\tilde \Delta W^i_O)^{\otimes 2}] - \E[(\tilde \Delta W^i_I)^{\otimes 2}]\\
			&\qquad+ \int^{i\Delta}_{(i-1)\Delta} S(i\Delta-r) Q S(i\Delta-r) - Q \dd r =\mathrm{I}_i + \mathrm{II}_i + \cdots + \mathrm{VIII}_i.
		\end{align*}
			These summands either contribute to the bias and the variance of the estimator. To be precise,
			\begin{align*}
				\left\|Bias\left(\frac{\mathrm{RV}^{\Delta,O}_T}{T}\right)\right\|_{\cL_2(H)}^2
				= &  \left\| T^{-1}\sum_{i=1}^{\ulT} \mathbb E[\mathrm{I}_i]+ \mathrm{V}_i+ \mathrm{VI}_i + \mathrm{VII}_i + \mathrm{VIII}_i\right\|_{\cL_2(H)}^2
			\end{align*}
			and
			\begin{align*}
				Var\left(\frac{\mathrm{RV}^{\Delta,O}_T}{T}\right)
				= & T^{-2} \mathbb E\left[\left\|\sum_{i=1}^{\ulT} (\mathrm{I}_i-\mathbb E[\mathrm{I}_i])+\mathrm{II}_i+ \mathrm{III}_i + \mathrm{IV}_i\right\|_{\cL_2(H)}^2\right].
			\end{align*}
			
			Thus, by Lemma~\ref{lem:X0}, Lemma~\ref{lem:space-conv} and Lemma~\ref{lem:time-conv}, we find
			\begin{align*}
				& Bias\left(\frac{\mathrm{RV}^{\Delta,O}_T}{T}\right)^2\\
				&\leq \begin{cases}T^{-2}\Delta^{\min(2\iota,2)}+\Delta^{\min(\gamma,2)} & \text{ if }O = I,\\
					T^{-2}\Delta^{\min(2\iota,2)}+h^{\min(2\gamma,2)}+ \Delta^{\min(\gamma,2)} 
					& \text{ if }O = P_h,\\
					T^{-2}(\Delta^{\min(2\iota,2)}+h^{\min(4\iota,8)}) + \Delta^{-\bar \epsilon}h^{2\min(\psi,2)}+ \Delta^{\min(\gamma,2)}
					& \text{ if }O = I_h.
				\end{cases} 
			\end{align*}

			For upper bounds on the variance, we have
			\begin{align*}
				&\Big\|\sum^{\ulT}_{i=1} \mathrm{II}_i \Big\|^2_{L^2(\Omega,\cL_2(H))} \\
				&\quad\lesssim \sum^{\ulT}_{i=1} \E[\|O E(\Delta ) S((i-1)\Delta) X_0\|_H^2] \\
				&\hspace{5em}\times \big(\E[\|\tilde\Delta W^1_O\|_H^2]+\E[\| E^{i-1}_O\|_H^2]\big) \\
				&\qquad+\sum^{\ulT}_{j=2}\sum^{\ulT-1}_{i=j+1} \E[\langle  O E(\Delta) S((i-1)\Delta) X_0,O E(\Delta) S((j-1)\Delta) X_0  \rangle] \\ &\hspace{4em}\times\E[\langle E_{i-j,O}^{j-1,0} , E_{O}^{j-1,0} \rangle] \\
				&\quad\le \sum^{\ulT}_{i=1} \E[\|O E(\Delta ) S((i-1)\Delta) X_0\|_H^2] \big(\E[\|\tilde\Delta W^1_O\|_H^2]+\E[\| E^{i-1}_O\|_H^2]\big)\\
				&\qquad+\Big(\sum^{\ulT}_{i=1} \| O E(\Delta) S((i-1)\Delta) X_0\|^2_{L^2(\Omega,H)}\Big)^{2}\\
				&\qquad+\sum^{\ulT}_{j=2}\sum^{\ulT-1}_{i=j+1} \| E_{i-j,O}^{j-1,0} \|^2_{L^2(\Omega,H)} \|E_{O}^{j-1,0}\|^2_{L^2(\Omega,H)},
			\end{align*}
			with the corresponding bound for $\mathrm{III}_i$ being found in the same way. 
			
			If $O\neq I_h$,
			we apply Lemma \ref{lem: second moments of increments} and Lemma \ref{lem:X0} to obtain a bound proportional to
			\begin{equation*}
				\Delta^{\min(\iota,1)}(\Delta+\Delta \zeta_{\Delta})+\Delta^{\min(2\iota,2)}+ \Delta \zeta_{\Delta}T
				\lesssim \Delta \zeta_{\Delta} T+\Delta^{\min(2\iota,2)},
			\end{equation*}
			for $\zeta_\Delta$ converging to $0$ as $\Delta \to 0$.
			
			If $O= I_h$,
			we use Lemma \ref{lem: second moments of increments} and Assumption~\ref{ass:Ih} as well as Lemma \ref{lem:X0}. Since $\Delta^{\min(\iota,1)}\Delta^{\min(1,(1+\beta)/2)}<\Delta$ due to the assumption $\iota>\frac 12$, we obtain a bound proportional to
			\begin{align*}
				&(\Delta^{\min(\iota,1)}\Delta^{\min(1,(1+\beta)/2)}+ h^{\min(2\iota,4)}\Delta^{\min(1, (1+\beta)/2)}) +(\Delta^{\min(\iota,1)}+h^{\min(2\iota,4)})^2\\
				& \qquad+ T\Delta^{\min(2, \beta + \epsilon/4,(1+\beta+\epsilon/2)/2})\\
				\lesssim & \Delta +h^{\min(4\iota,8)}+ T\Delta^{\min(2, \beta+\epsilon/4,(1+\beta+\epsilon/2)/2}).
			\end{align*}
			
				Moreover, by Lemma \ref{lem:X0} 
				\begin{align*}
					\mathbb E\left[  \left\| \sum_{i=1}^{\ulT} I_i-\mathbb E[I_i]\right\|_{\cL_2(H)}^2\right]
					\lesssim & \mathbb E\left[   \left(\sum_{i=1}^{\ulT} \left\|I_i\right\|_{\cL_2(H)}\right)^2\right]\\  \lesssim &\begin{cases}
						\Delta^{\min(2,2\iota)} & O\neq I_h,\\
						\Delta^{\min(2,2\iota)}+ h^{4\min(\iota,2)} & O= I_h.
					\end{cases}
				\end{align*}
				
				Hence, using also Lemma~\ref{lem:var-conv} to bound $\sum^{\ulT}_{i=1} \mathrm{IV}_i$ and since in the case that $O=I_h$ we assumed $\iota>\frac 12$ we obtain that 
				\begin{align*}
					Var\left(\frac{\mathrm{RV}^{\Delta,O}_T}{T}\right)\lesssim \begin{cases}T^{-1}\Delta+T^{-2}\Delta^{\min(2\iota,2)}& \text{ if }O = I,\\
						T^{-1}\Delta+T^{-2}\Delta^{\min(2\iota,2)}
						& \text{ if }O = P_h,\\
						T^{-1}\Delta^{\min(1,\beta)}+T^{-2}h^{\min(4\iota,8)}
						& \text{ if }O = I_h.
					\end{cases} 
				\end{align*}
				This proves the claim
			\end{proof}
			
			We end this section with the proof of the lower bound of  Proposition~\ref{prop:lower-bound}.
			\begin{proof}
				Without loss of generality, let $T=1$ and $X_0=0$. Let $A=\sum_{j=1}^{\infty} \lambda_j e_j^{\otimes 2}$ and  $Q=\sum_{j=1}^{\infty} \mu_j e_j^{\otimes 2}$ with respect to an orthonormal basis $(e_j)_{j=1}^\infty$ of $H$. Then, the squared bias may, with $T=1$ and $n:=\ulT$, be expressed by
				\begin{align*}
					&\left\|\mathbb E\left[\sum_{i=1}^{n} (X_{i\Delta}-X_{(i-1)\Delta})^{\otimes 2}\right]-Q\right\|^2_{\mathcal L_2(H)}\\
					\quad &= \sum_{j=1}^{\infty} \mu_j^2\left(\sum_{i=1}^n\int_0^{\Delta}e^{-2\lambda_js}ds+\int_0^{(i-1)\Delta}e^{-2\lambda_js}(1-e^{-\lambda_j\Delta})^2ds-\Delta\right)^2\\
					\quad &= \sum_{j=1}^{\infty} \mu_j^2\left(n\frac{(1-e^{-2\lambda_j\Delta})}{2\lambda_j}+\frac{(n-\sum_{i=1}^n e^{-2\lambda_j(i-1)\Delta})}{2\lambda_j}(1-e^{-\lambda_j\Delta})^2-1\right)^2\\
					\quad &=  \sum_{j=1}^{\infty} \frac{\mu_j^2}{4\lambda_j^2}(1-e^{-\lambda_j\Delta})^2\left(2n-\frac{1-e^{-2\lambda_j}}{1+e^{-\lambda_j\Delta}}-2\frac{\lambda_j}{(1-e^{-\lambda_j\Delta})}\right)^2.
				\end{align*}
				Now, by the mean value theorem we have $n(1-e^{-\lambda_j\Delta})\leq \lambda_j$, so
				\begin{align*}
					\left\| \mathbb{E}\left[\frac{RV^{\Delta,I}_T}{T}\right] - Q \right\|_{\mathcal{L}_2(H)}
					\geq & \sum_{j=1}^{\infty} \frac{\mu_j^2}{\lambda_j^2}(1-e^{-\lambda_j\Delta})^2\left(n-\frac{\lambda_j}{(1-e^{-\lambda_j\Delta})}\right)^2\\
					= & \sum_{j=1}^{\infty} \frac{\mu_j^2}{\lambda_j^2}\left((1-e^{-\lambda_j\Delta})n-\lambda_j\right)^2\\
					\geq & \frac{\Delta^2}{4} \sum_{j=1}^{\infty} \mu_j^2\lambda_j^2  e^{-2\lambda_j \Delta },
				\end{align*}
				where we in the last step used the fact that the function $$[0,\infty) \ni  x \mapsto ( (1-e^{-x})/x - 1)^2 e^x/x^2$$ is bounded from below by $1/4$.
				
				We set $\lambda_j = j^2$ for $j \ge 1$ and let $Q$ be defined by $\mu_1 = 0$ and 
				\[
				\mu_j = \frac{1}{j^{\gamma+1/2} \log j} \quad \text{for } j \ge 2.
				\]
				This choice of $Q$ is trace-class since $\gamma > 1/2$ and satisfies Assumption~\ref{ass:rate}\ref{ass:rate:q} since
				\[
				\|A^{\gamma/2}Q\|_{\cL_2(H)}^2 = \sum_{j=2}^\infty \lambda_j^\gamma \mu_j^2 = \sum_{j=2}^\infty j^{2\gamma} \left( \frac{1}{j^{\gamma+1/2} \log j} \right)^2 = \sum_{j=2}^\infty \frac{1}{j (\log j)^2} < \infty.
				\]
				With these choices,
				\begin{equation*}
					\left\|\mathbb E\left[\sum_{i=1}^{n} (X_{i\Delta}-X_{(i-1)\Delta})^{\otimes 2}\right]-Q\right\|^2_{\mathcal L_2(H)}
					\geq \frac{\Delta^2}{4} \sum_{j=2}^\infty \frac{j^{3-2\gamma}}{(\log j)^2} e^{-2j^2\Delta}.
				\end{equation*}
				Let $N = \lfloor 1/\sqrt{2\Delta} \rfloor$. For $j \le N$, $e^{-2j^2\Delta} \ge e^{-1}$. Thus,
				\[
				\sum_{j=2}^\infty \frac{j^{3-2\gamma}}{(\log j)^2} e^{-2j^2\Delta} \ge e^{-1} \sum_{j=2}^{N} \frac{j^{3-2\gamma}}{(\log j)^2}.
				\]
				The function $f(x) = x^{3-2\gamma}/(\log x)^2$ is eventually increasing for $\gamma \in (1/2,1)$. Therefore, for some fixed $c > 2$ and all $N > 2^{1/(4-2\gamma)} c$,
				\begin{align*}
					\sum_{j=2}^{N} \frac{j^{3-2\gamma}}{(\log j)^2} \ge \int_c^{N} \frac{x^{3-2\gamma}}{(\log x)^2} \dd x 
					&\ge \frac{1}{(\log N)^2} \int_c^{N} x^{3-2\gamma} \dd x \\
					&= \frac{N^{4-2\gamma} - c^{4-2\gamma}}{(4-2\gamma)(\log N)^2} \ge \frac{N^{4-2\gamma}}{(8-4\gamma)(\log N)^2}.
				\end{align*}
				In light of the definition of $N$, this completes the proof.
			\end{proof}
			
			\subsubsection{Proofs of Section \ref{Sec: Asymptotic Normality}}
			
			To prove Theorem \ref{thm: CLT}, we also need, in the pointwise-sampling case, rates of convergence of the discretization error of the semigroup adjusted covariation in the weak operator topology.
			
			\begin{lemma}\label{prop:temp-SARCV-diff-proj}
				Let Assumption \ref{ass:Ih} hold. Let $u, v \in H$. Let $\psi \in (d/2,1+\zeta)$ and write $\tilde \epsilon = \max(\psi-\zeta,0)+\max(d/2-\eta+\epsilon,0)$ for $ \epsilon  > 0$. Then, for sufficiently small $\epsilon>0$,  there is a constant $C$, depending on $u, v$ but not on $\Delta$, $T$ or $h$, such that
				\begin{align*} &\left\| \langle \mathrm{SARCV}_T^{\Delta, I_h} - \mathrm{SARCV}_T^{\Delta, I}, u \otimes v \rangle_{\mathcal{L}_2(H)} \right\|_{L^2(\Omega, \R)} \\
					&\quad \le C \Big(T^{1/2} \min(h^{(d+\epsilon)/2} \Delta^{\min(1,\beta)/2}, h^{\min(2,(\beta+d+1)/2)} \Delta^{\min(\epsilon/2,(2\epsilon+\beta-1)/4)}) \\
					&\hspace{4em} +T\Delta^{\frac{2-\tilde \epsilon}{2}} h^{\min(\psi,2)} \Big). \end{align*}
			\end{lemma}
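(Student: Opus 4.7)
The plan is to expand the Hilbert--Schmidt inner product into a sum of scalar random variables, exploit independence of the Wiener increments across disjoint time intervals to split the $L^2(\Omega)$-norm into bias and variance contributions, and then bound each separately.

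First I would use the bilinearity $\langle f^{\otimes 2}, u\otimes v\rangle_{\cL_2(H)} = \langle f, u\rangle\langle f, v\rangle$ to rewrite
\[
\langle \mathrm{SARCV}_T^{\Delta,I_h} - \mathrm{SARCV}_T^{\Delta,I},\, u\otimes v\rangle_{\cL_2(H)} = \sum_{i=1}^{\ulT} X_i,
\]
with $X_i := \langle \tilde\Delta W^i_{I_h}, u\rangle\langle \tilde\Delta W^i_{I_h}, v\rangle - \langle \tilde\Delta W^i_I, u\rangle\langle \tilde\Delta W^i_I, v\rangle$. Since $\tilde\Delta W^i_{\cdot}$ depends only on $W$ restricted to $[(i-1)\Delta, i\Delta]$, the $X_i$ are i.i.d., and independence together with stationarity yields
\[
\Big\|\sum_{i=1}^{\ulT}X_i\Big\|_{L^2(\Omega,\R)} \le \sqrt{\ulT}\,\mathrm{Var}(X_1)^{1/2} + \ulT\,|\E[X_1]|,
\]
which already matches the $T^{1/2}$-variance and $T$-bias structure of the claimed bound.

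For the bias, the quickest route is to write $\E[X_1] = \langle \E[(\tilde\Delta W^1_{I_h})^{\otimes 2} - (\tilde\Delta W^1_I)^{\otimes 2}], u\otimes v\rangle_{\cL_2(H)}$, apply Cauchy--Schwarz in $\cL_2(H)$, and invoke Lemma~\ref{lem:space-conv}\ref{lem:space-conv:1} to obtain the per-step estimate $|\E[X_1]| \lesssim \Delta^{(2-\tilde\epsilon)/2}h^{\min(\psi,2)}\|u\|\|v\|$, producing the $T$-contribution after multiplying by $\ulT$. For the variance, I would use $ab - cd = (a-c)b + c(b-d)$ to decompose $X_1 = Y_u\,Z^{I_h}(v) + Z^I(u)\,Y_v$, where $Z^O(w) := \langle \tilde\Delta W^1_O, w\rangle$ and $Y_w := Z^{I_h}(w) - Z^I(w)$. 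Cauchy--Schwarz and the $L^4$--$L^2$ equivalence for Gaussian random variables reduce everything to second-moment estimates, which via It\^o isometry read
\[
\|Z^O(w)\|_{L^2}^2 = \int_0^\Delta \|Q^{1/2}S(r)O^*w\|^2\,\mathrm{d}r,\qquad \|Y_w\|_{L^2}^2 = \int_0^\Delta \|Q^{1/2}S(r)(I_h-I)^*w\|^2\,\mathrm{d}r.
\]
The bounds on $Z^{I_h}(v)$ and $Z^I(u)$ follow directly from Lemma~\ref{lem: second moments of increments} combined with Lemma~\ref{lem:obs-hs}. The two alternatives inside the minimum come from two ways to distribute regularity in the bound on $\|Y_w\|_{L^2}^2$: the first uses Lemma~\ref{lem:obs-hs} to control $(I_h-I)^*w$ in a negative Sobolev space of order $-(d+\epsilon)/2$ combined with $\|A^{(\beta+d-1+\epsilon)/4}Q^{1/2}\|_{\cL(H)}<\infty$ from Assumption~\ref{ass:Ih}\ref{ass:Ih:q:1}; the second uses Lemma~\ref{lem:obs-error} at the higher Sobolev index $(\beta+d+1)/2$ and the smoothing estimate $\|A^{(1-\epsilon/2)/2}S(r)\|_{\cL(H)}\lesssim r^{-(1-\epsilon/2)/2}$ from~\eqref{eq:semigroup-analyticity}, trading a stronger $h$-power for a weaker $\Delta$-power.

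The main obstacle will be this variance step: the intermediate Sobolev exponents must be chosen so that the It\^o integrand is integrable near $r=0$ and both alternative $h$--$\Delta$ exponents come out sharp, especially in the regime $\beta<1$ where $S(r)$ only partially smooths across the relevant Sobolev scales and the $\epsilon$-losses in both $h$ and $\Delta$ need to be tracked carefully.
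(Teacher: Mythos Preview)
Your approach is essentially identical to the paper's: the same i.i.d.\ decomposition into bias and variance, the bias handled via Lemma~\ref{lem:space-conv}\ref{lem:space-conv:1}, and the variance via a product decomposition of $X_1$ followed by Gaussian $L^4$--$L^2$ equivalence. The paper uses the symmetric splitting $D_i=\tilde\Delta W^i_{I_h}-\tilde\Delta W^i_I$, $S_i=\tilde\Delta W^i_{I_h}+\tilde\Delta W^i_I$ rather than your asymmetric $ab-cd=(a-c)b+c(b-d)$, but this is immaterial.

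One point to correct: in your first variance alternative you invoke Lemma~\ref{lem:obs-hs}, but that lemma gives only a uniform (in $h$) Hilbert--Schmidt bound on $I_h$ and carries no $h$-power. The factor $h^{(d+\epsilon)/2}$ must come from Lemma~\ref{lem:obs-error} applied at Sobolev index $(d+\epsilon)/2$ (equivalently, its dual acting on $(I_h-I)^*w$), exactly as in your second alternative at the higher index $(\beta+d+1)/2$. Concretely, the paper bounds $\|(I_h-I)S(r)Q^{1/2}\|_{\cL(H)} \le \|I_h-I\|_{\cL(\dot H^{(d+\epsilon)/2},H)}\|S(r)A^{(1-\beta)/4}\|_{\cL(H)}\|A^{(\beta+d+\epsilon-1)/4}Q^{1/2}\|_{\cL(H)}$, which is the clean way to see both the $h$- and $\Delta$-powers emerge. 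With this correction your sketch goes through.
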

			
			\begin{proof}
				Let $\xi_i(O) = \langle \tilde \Delta W_{O}^i, u \rangle \langle \tilde \Delta W_{O}^i, v \rangle$. The term whose $L^2(\Omega, \R)$ norm we want to bound is
				\begin{align*}
					&\langle \mathrm{SARCV}_T^{\Delta, {I_h}} - \mathrm{SARCV}_T^{\Delta, I}, u \otimes v \rangle_{\mathcal{L}_2(H)} \\
					&= \sum_{i=1}^{\ulT} \langle (\tilde \Delta W_{I_h}^i)^{\otimes 2} - (\tilde \Delta W_I^i)^{\otimes 2}, u \otimes v \rangle_{\mathcal{L}_2(H)} \\
					&= \sum_{i=1}^{\ulT} (\xi_i({I_h}) - \xi_i(I)).
				\end{align*}
				Since $(\tilde \Delta W_{I_h}^i, \tilde \Delta W_I^i)_{i=1}^{\ulT}$ are independent and stationary pairs,
				\begin{align*}
					&\left\| \sum_{i=1}^{\ulT} (\xi_i({I_h}) - \xi_i(I)) \right\|_{L^2(\Omega, \R)}^2 \\
					&\quad= \ulT \left(\Var(\xi_i({I_h}) - \xi_i(I)) + \ulT \E[\xi_i({I_h}) - \xi_i(I)]^2 \right).
				\end{align*}
				Let $D_i = \tilde \Delta W_{I_h}^i - \tilde \Delta W_I^i$ and $S_i = \tilde \Delta W_{I_h}^i + \tilde \Delta W_I^i$. Then, as in \eqref{eq:operator-product-difference},
				\begin{align*}
					\xi_i({I_h}) - \xi_i(I) &= \frac{1}{2} \left( \langle D_i, u \rangle \langle S_i, v \rangle + \langle S_i, u \rangle \langle D_i, v \rangle \right).
				\end{align*}
				
				For the variance term, we use the facts that $D_i$ and $S_i$ are Gaussian random variables to obtain
				\begin{align*}
					&	\E[(\xi_i({I_h}) - \xi_i(I))^2] \\
					&= \frac{1}{4} \E\left[ \left( \langle D_i, u \rangle \langle S_i, v \rangle + \langle S_i, u \rangle \langle D_i, v \rangle \right)^2 \right] \\
					&\le \frac{1}{2} \E\left[ (\langle D_i, u \rangle \langle S_i, v \rangle)^2 + (\langle S_i, u \rangle \langle D_i, v \rangle)^2 \right]  \\
					&\le \frac{1}{2} \Big( \E[\langle D_i, u \rangle^4]^{1/2} \E[\langle S_i, v \rangle^4]^{1/2} + \E[\langle S_i, u \rangle^4]^{1/2} \E[\langle D_i, v \rangle^4]^{1/2} \Big) \\
					&\lesssim \E[\langle D_i, u \rangle^2] \E[\langle S_i, v \rangle^2] + \E[\langle S_i, u \rangle^2] \E[\langle D_i, v \rangle^2].
				\end{align*}
				Here, $$\E[\langle D_i, u \rangle^2] = \int_0^\Delta \langle ({I_h}-I) S(r) Q (({I_h}-I) S(r))^* u,u \rangle \dd r,$$ and similarly $$\E[\langle S_i, u \rangle^2] = \int_0^\Delta \langle ({I_h}+I) S(r) Q (({I_h}+I) S(r))^* u,u \rangle \dd r.$$ 
				
				By \eqref{eq:operator-product-difference}, $\E[\xi_1({I_h}) - \xi_1(I)]$ is given by 
				\begin{align*}
					\frac{1}{2} \left\langle \int_0^\Delta [ ({I_h}+I) S(r) Q  (({I_h}-I)S(r))^* + ({I_h}-I) S(r) Q (({I_h}+I)S(r))^* ] u \dd r, v \right\rangle .
				\end{align*}
				We first consider the variance term. We have
				\begin{align*}
					\E[\langle D_i, u \rangle^2] 
					&\le \int_0^\Delta \| (I_h - I) S(r) Q^{1/2} u\|^2 \dd r.
				\end{align*}
				Applying Lemma~\ref{lem:obs-error}, Assumption~\ref{ass:Ih} and~\eqref{eq:semigroup-analyticity}, we obtain on the one hand, 
				\begin{align*}
					&\| (I_h - I) S(r) Q^{1/2} \|_{\mathcal{L}(H)} \\
					&\le \| I_h - I \|_{\cL(\dot{H}^{(d+\epsilon)/2}, H)} \| S(r) A^{\frac{1-\beta}{4}} \|_{\mathcal{L}(H)} \| A^{\frac{\beta+d+\epsilon-1}{4}} Q^{1/2} \|_{\mathcal{L}(H)} \\
					&\lesssim h^{(d+\epsilon)/2} r^{-\frac{1-\beta}{4}}.
				\end{align*}
				On the other hand, 
				\begin{align*}
					&	\| (I_h - I) S(r) Q^{1/2} \|_{\mathcal{L}(H)} \\
					&\le \| I_h - I \|_{\cL(\dot{H}^{(\beta+d+1)/2}, H)} \| S(r) A^{\frac{2-\epsilon}{4}} \|_{\mathcal{L}(H)} \| A^{\frac{\beta+d+\epsilon-1}{4}} Q^{1/2} \|_{\mathcal{L}(H)} \\
					&\lesssim h^{\min(2,(\beta+d+1)/2)} r^{-\frac{1-\epsilon/2}{2}}.
				\end{align*}
				Therefore
				\begin{align*}
					\E[\langle D_i, u \rangle^2] \lesssim \min(h^{d+\epsilon} \Delta^{\min(1,(1+\beta)/2)}, h^{\min(4,\beta+d+1)} \Delta^{\epsilon}).
				\end{align*}
				For the $S_i$ term, we obtain in the same way,
				\begin{align*}
					\E[\langle S_i, u \rangle^2] 
					&\lesssim \int_0^\Delta \| I_h + I \|^2_{\cL(\dot{H}^{(d+\epsilon)/2}, H)} \| S(r) A^{\frac{1-\beta}{4}} \|^2_{\mathcal{L}(H)} \| A^{\frac{\beta+d+\epsilon-1}{4}} Q^{1/2} \|^2_{\mathcal{L}(H)} \dd r \\
					&\lesssim \Delta^{\min(1,(1+\beta)/2)},
				\end{align*}
				and thus
				\begin{align*}
					\ulT \Var(\xi_i({I_h}) - \xi_i(I)) \lesssim T \min(h^{d+\epsilon} \Delta^{\min(1,\beta)}, h^{\min(4,\beta+d+1)} \Delta^{\min(\epsilon,(2\epsilon+\beta-1)/2)}).
				\end{align*}
				
				For the the bias term, we simply apply the bound
				\begin{align*}
					\left| \E[\xi_1({I_h}) - \xi_1(I)] \right| 
					&\quad\lesssim \int_0^\Delta \| (I_h - I) S(r) Q^{1/2} ((I_h + I) S(r) Q^{1/2})^* \|_{\cL(H)} \dd r \\
					&\quad\lesssim \int_0^\Delta \| (I_h - I) S(r) Q^{1/2} ((I_h + I) S(r) Q^{1/2})^* \|_{\cL_2(H)} \dd r,
				\end{align*}
				which is treated in the same way as in Lemma~\ref{lem:space-conv}\ref{lem:space-conv:1}. Combining these two results completes the proof.
			\end{proof}
			
			\begin{lemma}\label{lem: discret of SARCV}
				Let one of the subsequent conditions be valid:
				\begin{itemize}
					\item $O=P_h$, Assumption \ref{ass: minimal Assumption on A} holds and the subdivisions $(\cT_h)_{h \in (0,1]}$ are nested.
					\item $O=I_h$, Assumption \ref{ass:Ih} holds and  $h=o(\Delta^{\frac{\tilde \epsilon-1}{2\min(\psi,2)}})$ where $\tilde \epsilon$ and $\psi$ are defined as in Theorem \ref{thm: CLT}(C).
				\end{itemize}
				Then the following limit holds in probability.
				\begin{align}\label{eq: sarcv discretization}
					\lim_{\Delta\to 0} \Delta^{-\frac 12}\left( \left(\mathrm{SARCV}^{\Delta,O}_{T} - \E[\mathrm{SARCV}^{\Delta,O}_{T}] \right)-\left(\mathrm{SARCV}^{\Delta,I}_{T} - \E[\mathrm{SARCV}^{\Delta,I}_{T}] \right)\right)
					= & 0.
				\end{align}
		\end{lemma}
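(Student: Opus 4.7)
The strategy is to exploit the i.i.d.\ increment structure of $\mathrm{SARCV}$ to reduce the claim to an $L^2(\Omega, \cL_2(H))$ estimate on a single increment, and then bound the difference $(\tilde \Delta W_O^i)^{\otimes 2} - (\tilde \Delta W_I^i)^{\otimes 2}$ using the regularizing effect of $S(\cdot)$ together with the approximation properties of $O$. Since $X_{i\Delta} - S(\Delta) X_{(i-1)\Delta} = \tilde \Delta W^i$ independently of $X_0$, I may write $\mathrm{SARCV}_T^{\Delta, O} = \sum_{i=1}^{\ulT} (\tilde \Delta W_O^i)^{\otimes 2}$ and set $\xi_i := (\tilde \Delta W_O^i)^{\otimes 2} - (\tilde \Delta W_I^i)^{\otimes 2}$. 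The $\xi_i$ are i.i.d.\ in $\cL_2(H)$, so by Chebyshev it suffices to establish $\Delta^{-1} \ulT \, \E[\|\xi_1 - \E[\xi_1]\|_{\cL_2(H)}^2] \to 0$, equivalently $\E[\|\xi_1\|_{\cL_2(H)}^2] = o(\Delta^2)$.

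For the case $O = P_h$, writing $Z := \tilde \Delta W^1$ and using the algebraic identity
\begin{equation*}
(P_h Z)^{\otimes 2} - Z^{\otimes 2} = -((I-P_h) Z) \otimes Z - Z \otimes ((I-P_h) Z) + ((I-P_h) Z)^{\otimes 2},
\end{equation*}
together with the Gaussian fourth-moment inequality, I obtain $\E[\|\xi_1\|_{\cL_2(H)}^2] \lesssim \E[\|(I-P_h) Z\|^2] \cdot \E[\|Z\|^2]$. The second factor is $\cO(\Delta)$ by Lemma~\ref{lem: second moments of increments}, while It\^o isometry gives
\begin{equation*}
\E[\|(I-P_h) Z\|^2] = \int_0^\Delta \|(I-P_h)S(r)Q^{\frac 12}\|_{\cL_2(H)}^2 \dd r \le \Delta \sup_{r \in [0,\Delta]} \|(I-P_h)S(r)Q^{\frac 12}\|_{\cL_2(H)}^2.
\end{equation*}
The remaining task is to show the supremum vanishes as $h, \Delta \to 0$. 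I split $\|(I-P_h)S(r)Q^{1/2}\|_{\cL_2(H)} \le \|(I-P_h)Q^{1/2}\|_{\cL_2(H)} + \|(S(r)-I)Q^{1/2}\|_{\cL_2(H)}$. The nested-subdivision hypothesis forces $P_h \to I$ strongly, and since $Q^{1/2}$ is Hilbert--Schmidt, dominated convergence yields the first term $\to 0$; the second term is $o(1)$ as $r \to 0$ by Lemma~5.1(i) in \cite{Benth2022}, already invoked in the proof of Theorem~\ref{thm: Identifiability}. This delivers $\E[\|\xi_1\|_{\cL_2(H)}^2] = o(\Delta^2)$.

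For the case $O = I_h$, the same algebraic decomposition is applied, but $(I-I_h)$ must be controlled between appropriate Sobolev scales. I will use Lemma~\ref{lem:obs-error} to bound $\|(I-I_h)S(r)Q^{1/2}\|_{\cL_2(H)} \lesssim h^{\min(\psi,2)} \|A^{\psi/2} S(r) Q^{1/2}\|_{\cL_2(H)}$, then split $A^{\psi/2} = A^{(\psi-\zeta)/2} A^{\zeta/2}$ and insert a factor $A^{-\eta/2} A^{\eta/2}$ to exploit Assumption~\ref{ass:Ih}\ref{ass:Ih:q:2}, controlling the semigroup factor via \eqref{eq:semigroup-analyticity}. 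The $\|I_h Z\|$ contributions are handled via the uniform Hilbert--Schmidt bound in Lemma~\ref{lem:obs-hs}, analogously to the derivation in \eqref{eq:semigroup-observation operator regularity}. Assembling these estimates yields $\E[\|\xi_1\|_{\cL_2(H)}^2] \lesssim h^{2\min(\psi,2)} \Delta^{1-\tilde \epsilon} \cdot \Delta^{\min(1,(1+\beta)/2)}$ (up to sub-leading terms), and the stated coupling between $h$ and $\Delta$ is precisely what makes this $o(\Delta^2)$.

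The main obstacle lies in the $P_h$ case: under only the trace-class hypothesis on $Q$ no quantitative rate is available for $\|(I-P_h)S(r)Q^{1/2}\|_{\cL_2(H)}$, so the convergence must be extracted from a compactness/dominated-convergence argument anchored on the Hilbert--Schmidt character of $Q^{1/2}$ and on the nested-subdivision structure. In the $I_h$ case the technical delicacy is instead bookkeeping: making sure the Sobolev exponents from Lemmata~\ref{lem:obs-error} and~\ref{lem:obs-hs} fit Assumption~\ref{ass:Ih} and that all semigroup singularities at $r = 0$ remain integrable under the $h$--$\Delta$ coupling.
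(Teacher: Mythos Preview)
Your treatment of the $P_h$ case is correct and matches the paper's argument closely: both reduce to showing $\sup_r \|(I-P_h)S(r)Q^{1/2}\|_{\cL_2(H)}^2 \to 0$, and your triangle-inequality split achieves the same conclusion as the paper's Dini-theorem argument.

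For $O=I_h$, however, your direct $L^2(\Omega,\cL_2(H))$ bound has a genuine gap. After the algebraic decomposition you need to control
\[
\E[\|(I_h-I)\tilde\Delta W_I^1\|^2] = \int_0^\Delta \|(I_h-I)S(r)Q^{1/2}\|_{\cL_2(H)}^2 \,\dd r,
\]
and you propose the bound $\|(I_h-I)S(r)Q^{1/2}\|_{\cL_2(H)} \lesssim h^{\min(\psi,2)}\|A^{\psi/2}S(r)Q^{1/2}\|_{\cL_2(H)}$. The problem is the remaining Hilbert--Schmidt norm. Assumption~\ref{ass:Ih}\ref{ass:Ih:q:2} concerns $Q$, not $Q^{1/2}$, so inserting $A^{\zeta/2}$ and $A^{\eta/2}$ does not help here; the only Hilbert--Schmidt handle on $Q^{1/2}$ is the trace-class property itself, which forces the crude estimate $\|A^{\psi/2}S(r)Q^{1/2}\|_{\cL_2(H)} \lesssim r^{-\psi/2}\|Q^{1/2}\|_{\cL_2(H)}$. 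Since $\psi>d/2$, the resulting integral $\int_0^\Delta r^{-\psi}\,\dd r$ diverges for $d\ge 2$. The operator $I_h-I$ is not itself Hilbert--Schmidt (the identity is not compact), so you cannot shift the HS burden onto it either.

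This is exactly why the paper abandons the $L^2(\Omega,\cL_2(H))$ route for $I_h$ and instead proves the limit in probability via tightness plus convergence of finite-dimensional distributions. Tightness of each centered $\mathrm{SARCV}$ sequence is shown separately using spectral projections $P_N^\perp$ and Lemma~\ref{lem:obs-hs}. Finite-dimensional convergence (Lemma~\ref{prop:temp-SARCV-diff-proj}) only requires the \emph{operator}-norm bound $\|(I_h-I)S(r)Q^{1/2}\|_{\cL(H)}$, which \emph{is} available from Lemma~\ref{lem:obs-error} together with Assumption~\ref{ass:Ih}\ref{ass:Ih:q:1}, and this is what makes the argument go through in all dimensions.
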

		\begin{proof}
			We start with the proof in the case $O=P_h$.  For that, we let $Z_i := P_h(\tilde\Delta W_I^i)^{\otimes 2}P_h - (\tilde\Delta W_I^i)^{\otimes 2}$. 
			The term we want to bound is the $L^2(\Omega,\cL_2(H))$ norm of
			\begin{align*}
				&\left(\mathrm{SARCV}^{\Delta,P_h}_{T} - \E[\mathrm{SARCV}^{\Delta,P_h}_{T}] \right)-\left(\mathrm{SARCV}^{\Delta,I}_{T} - \E[\mathrm{SARCV}^{\Delta,I}_{T}] \right) \\
				&\quad= \sum_{i=1}^{\ulT} (Z_i - \E[Z_i]).
			\end{align*}
			The sequence $(Z_i - \E[Z_i])_{i=1}^{\ulT}$ is a martingale difference sequence. Therefore
			\begin{align*}
				&\left\| \sum_{i=1}^{\ulT} (Z_i - \E[Z_i]) \right\|_{L^2(\Omega,\cL_2(H))}^2 \\
				&\quad= \sum_{i=1}^{\ulT} \E\left[ \| Z_i - \E[Z_i] \|_{\cL_2(H)}^2 \right] \\
				&\quad\le \sum_{i=1}^{\ulT} \E\left[ \| Z_i \|_{\cL_2(H)}^2 \right] = \sum_{i=1}^{\ulT} \E\left[ \left\| P_h(\tilde\Delta W_I^i)^{\otimes 2}P_h - (\tilde\Delta W_I^i)^{\otimes 2} \right\|_{\cL_2(H)}^2 \right].
			\end{align*}
			Applying \eqref{eq:operator-product-difference}, the fact that $P_h$ is bounded  and that $\tilde \Delta W^i_I$ is Gaussian,
			\begin{align*}
				\E\left[ \| Z_i \|_{\cL_2(H)}^2 \right] &\lesssim \E\left[ \left\| (P_h+I)(\tilde\Delta W_I^i)^{\otimes 2}(P_h-I) \right\|_{\cL_2(H)}^2 \right] \\ 
				&\lesssim \E\left[ \left\| (P_h-I) (\tilde\Delta W_I^i)^{\otimes 2} \right\|_{\cL_2(H)}^2 \right] \\
				&= \E\left[ \left\| (P_h-I) \tilde\Delta W_I^i \right\|^2 \left\| \tilde\Delta W_I^i \right\|^2 \right] \\
				&\le \left( \E\left[ \left\| (P_h-I) \tilde\Delta W_I^i \right\|^4 \right] \right)^{1/2} \left( \E\left[ \left\| \tilde\Delta W_I^i \right\|^4 \right] \right)^{1/2} \\
				&\lesssim \E\left[ \left\| (P_h-I) \tilde\Delta W_I^i \right\|^2 \right] \E\left[ \left\| \tilde\Delta W_I^i \right\|^2 \right] \lesssim \Delta \E\left[ \left\| (P_h-I) \tilde\Delta W_I^i \right\|^2 \right].
			\end{align*}
			By It\^o isometry,
			\begin{align*}
				\E\left[ \left\| (P_h-I) \tilde\Delta W_I^i \right\|^2 \right] &= \int_0^\Delta \left\| (P_h-I) S(r) Q^{1/2} \right\|_{\cL_2(H)}^2 \dd r. 
			\end{align*} 
			This yields
			\begin{align*}
				&\left(\mathrm{SARCV}^{\Delta,P_h}_{T} - \E[\mathrm{SARCV}^{\Delta,P_h}_{T}] \right)-\left(\mathrm{SARCV}^{\Delta,I}_{T} - \E[\mathrm{SARCV}^{\Delta,I}_{T}] \right) \\
				&\quad\leq \sum_{i=1}^{\ulT} \Delta  \E\left[ \left\| (P_h-I) \tilde\Delta W_I^i \right\|^2 \right]
				\lesssim  T\Delta \sup_{r\leq 1}\left\| (P_h-I) S(r) Q^{1/2} \right\|_{\cL_2(H)}^2.
			\end{align*}
			To see that $\sup_{r\leq 1}\| (P_h-I) S(r) Q^{1/2} \|_{\cL_2(H)}^2$ converges to $0$ as $h\to 0$, first consider $v \in H$ and let $\epsilon > 0$ be arbitrary. By density, we may choose $u \in H^1$ such that $\|v-u\|_H < \epsilon/2$ and by Lemma~\ref{lem:obs-error} we may find an $h_0 > 0$ such that $\|(I-P_h)u\| \le \epsilon/2$ for all $h < h_0$. Since the subdivisions are nested, this shows that as $h \to 0$, $\|(I-P_h)v\| \to 0$ monotonously. By Dini's theorem, using also that $Q^{\frac 12}$ is Hilbert-Schmidt,  the factor $\sup_{r\leq 1}\left\| (P_h-I) S(r) Q^{1/2} \right\|_{\cL_2(H)}^2$ therefore converges to $0$ as $h\to 0$.
			
			An application of Markov's inequality proves the claim in the case that $O=P_h$.
			Let us now turn to the case that $O=I_h$. We begin by proving tightness of the term on the right of \eqref{eq: sarcv discretization} and then prove that its finite dimensional distributions converge to $0$. 
			
			To prove tightness, let $P_N$ be the projection onto $span\{e_1, e_2, \ldots, e_N\}$, where $(e_i)_{i=1}^\infty$ is the orthonormal eigenbasis of $A$ and note that  
			\begin{align*}
				& \E \left[ \left\| P_N^\perp \left( \mathrm{SARCV}_T^{\Delta,I} - \E[\mathrm{SARCV}_T^{\Delta,I}] \right) P_N^\perp \right\|_{\cL_2(H)}^2 \right] \\
				&= \sum_{i=1}^{\ulT} \E \left[ \left\| P_N^\perp \left( (\tilde{\Delta}W_I^i)^{\otimes 2} - \E[(\tilde{\Delta}W_I^i)^{\otimes 2}] \right) P_N^\perp \right\|_{\cL_2(H)}^2 \right] \\
				&\le \sum_{i=1}^{\ulT} \E \left[ \left\| P_N^\perp \tilde{\Delta}W_I^i \right\|_{H}^4  \right] \\
				&\lesssim \sum_{i=1}^{\ulT} \left( \int_0^\Delta \left\| P_N^\perp S(r) Q^{1/2} \right\|_{\cL_2(H)}^2 \dd r \right)^2 \lesssim T \Delta \sup_{r \in [0,1]} \left\| P_N^\perp S(r) Q^{1/2} \right\|_{\cL_2(H)}^2.
			\end{align*}
			Combining the fact that $S$ is a strongly continuous semigroup with Dini's theorem, the sequence $\Delta^{-1/2} (SARCV_{T}^{\Delta,I}-\mathbb E[SARCV_{T}^{\Delta,I}])$ is tight by Corollary D.3 from \cite{BSV2022supplement}. Similarly, 
			\begin{align*}
				&\E \left[ \left\| P_N^\perp \left( \mathrm{SARCV}_T^{\Delta,I_h} - \E[\mathrm{SARCV}_T^{\Delta,I_h}] \right) P_N^\perp \right\|_{\cL_2(H)}^2 \right] \\
				&\quad\lesssim \sum_{i=1}^{\ulT} \left( \int_0^\Delta \left\| P_N^\perp A^{-\frac{\delta}{4}} A^{\frac{\delta}{4}} I_h A^{-\frac{d +\epsilon}{4}} A^{\frac{d +\epsilon}{4}} S(r) Q^{1/2} \right\|_{\cL_2(H)}^2\dd r\right)^2
				\\
				&\quad\lesssim \| P_N^\perp A^{-\frac{\delta}{4}} \|^2_{\cL(H)} \|A^{\frac{\delta}{4}} I_h A^{-\frac{d +\epsilon}{4}} \|^2_{\cL_2(H)} \sum_{i=1}^{\ulT} \Delta^2 \lesssim \| P_N^\perp A^{-\frac{\delta}{4}} \|^2_{\cL(H)} \Delta,
			\end{align*}
			where we, in the last step, followed the approach to analyzing $\tilde{\Delta}W_{I_h}$ given in Lemma~\ref{lem: RV-SARCV asympt equiv}, taking $\gamma = 1$ into account. We also chose $\delta > 0$ so small that $\|A^{\frac{\delta}{4}} I_h A^{-\frac{d +\epsilon}{4}} \|^2_{\cL_2(H)} < \infty$ by Lemma~\ref{lem:obs-hs}. Since $A^{-\delta/4}$ is compact, the sequence $\Delta^{-1/2} (SARCV_{T}^{\Delta,I_h}-\mathbb E[SARCV_{T}^{\Delta,I_h}])$ is also tight by Corollary D.3 from \cite{BSV2022supplement}.
			This proves tightness of the term on the left of \eqref{eq: sarcv discretization}.

			It is now left to prove that finite-dimensional distributions converge to $0$. This follows immediately by 
			Lemma~\ref{prop:temp-SARCV-diff-proj}, 	since for 	any $u, v \in H$,
			\begin{align*} 
				&\Big\| \langle \Delta^{-\frac 12}\big( \big(\mathrm{SARCV}^{\Delta,O}_{T} - \E[\mathrm{SARCV}^{\Delta,O}_{T}] \big) \\
				&\quad-\big(\mathrm{SARCV}^{\Delta,I}_{T} - \E[\mathrm{SARCV}^{\Delta,I}_{T}] \big)\big), u \otimes v \rangle_{\mathcal{L}_2(H)} \Big\|_{L^2(\Omega, \R)} \\
				&\le \Delta^{-\frac 12}\left\| \langle \mathrm{SARCV}_T^{\Delta, I_h} - \mathrm{SARCV}_T^{\Delta, I}, u \otimes v \rangle_{\mathcal{L}_2(H)} \right\|_{L^2(\Omega, \R)} \notag\\
				&\lesssim  h^{(d+\epsilon)/2}  + \Delta^{\frac{1-\tilde \epsilon}{2}} h^{\min(\psi,2)}.
			\end{align*}
			This proves the claim.
		\end{proof}
		
		We can now prove the central limit theorem \ref{thm: CLT}.
		\begin{proof}[Proof of Theorem \ref{thm: CLT}]
			We first show that it is enough to prove the claim when $X_0\equiv 0$, and recall the notation
			$Y_t= X_t-S(t)X_0$. We have
			\begin{align*}
				RV_{T}^{\Delta,O}
				=  &\sum_{i=1}^{\ulT} (OY_{i\Delta}-OY_{(i-1)\Delta})^{\otimes 2}\\
				&+ \sum_{i=1}^{\ulT} (OY_{i\Delta}-OY_{(i-1)\Delta})\otimes (OE(\Delta)S((i-1)\Delta)X_0)\\
				&+\sum_{i=1}^{\ulT} (OE(\Delta)S((i-1)\Delta)X_0)\otimes(OY_{i\Delta}-OY_{(i-1)\Delta})\\
				&+ \sum_{i=1}^{\ulT}(OE(\Delta)S((i-1)\Delta)X_0)^{\otimes 2}.
			\end{align*}
			We need to prove that the  last three summands on the right are $o_p(\Delta^{\frac 12})$. A simple application of the triangle inequality and  Hölder's inequality shows that 
			\begin{align*}
				& \left\|\sum_{i=1}^{\ulT} (OY_{i\Delta}-OY_{(i-1)\Delta})\otimes (OE(\Delta)S((i-1)\Delta)X_0)\right.\\
				&\left. +\sum_{i=1}^{\ulT} (OE(\Delta)S((i-1)\Delta)X_0)\otimes(OY_{i\Delta}-OY_{(i-1)\Delta})\right.\\
				&\left. + \sum_{i=1}^{\ulT}(OE(\Delta)S((i-1)\Delta)X_0)^{\otimes 2}\right\|_{\cL_2(H)}\\
				\leq &\left(\sum_{i=1}^{\ulT} \|OY_{i\Delta}-OY_{(i-1)\Delta}\|^2\right)^{\frac 12}\left(\sum_{i=1}^{\ulT} \|OE(\Delta)S((i-1)\Delta)X_0\|^2\right)^{\frac 12}\\
				&+\sum_{i=1}^{\ulT} \|OE(\Delta)S((i-1)\Delta)X_0\|^2.
			\end{align*}
			It is, therefore, enough to show
			\begin{equation}\label{eq: negligibility of the initial condition-i}
				\Delta^{-\frac 12} \sum_{i=1}^{\ulT} \|OE(\Delta)S((i-1)\Delta)X_0\|^2=o_p(1),
			\end{equation}
			and
			\begin{equation}\label{eq: negligibility of the initial condition-ii}
				\Delta^{-\frac 12} \sum_{i=1}^{\ulT} \|OY_{i\Delta}-OY_{(i-1)\Delta}\|^2=\mathcal O_p(1).
			\end{equation}
			We first prove \eqref{eq: negligibility of the initial condition-i}.
			For that, we apply Lemma \ref{lem:X0} to obtain 
			\begin{align*}
				&	\Delta^{-\frac 12} \sum_{i=1}^{\ulT}\left\|O E(\Delta)S((i-1)\Delta)X_0\right\|_{H}^2\\
				&\lesssim \begin{cases}
					\zeta_\Delta \Delta^{\min(\iota-1/2,1/2)}, & O\neq  I_h,\\
					\Delta^{\min(\iota-1/2,1/2)}+\Delta^{-\frac 12} h^{2\min(\iota,2)}, & O=I_h.
				\end{cases}
			\end{align*}
			
			When $O\neq I_h$, the assumption that $\iota \ge 1/2$ yields convergence to $0$. When $O=I_h$, by the assumption that $\iota >\psi$ and $h=\mathcal O(\Delta^{\frac {1+\tilde \epsilon}{2\min(\psi,2)}})$, we find $\Delta^{-\frac 12} h^{2\min(\iota,2)}=\mathcal O(\Delta^{1/2+\tilde \epsilon})$ proving convergence to $0$.
			
			The proof of \eqref{eq: negligibility of the initial condition-ii} follows by Lemma \ref{lem: second moments of increments} since  $OY_{i\Delta}-OY_{(i-1)\Delta}= \tilde \Delta W_O^i+ E_O^{i-1,0}$ and by Assumption $\beta \geq 1$ in the case that $O=I_h$.  
			
			From here on, we assume that $X_0\equiv 0$.
			Observe that $$\mathbb E[RV_{T}^{\Delta,O}]= \mathbb E[SARCV_{T}^{\Delta,O}]+\sum_{i=1}^{\ulT} \mathbb E \left[(OE(\Delta)Y_{(i-1)\Delta})^{\otimes 2}\right].$$
			We decompose the error by
			\begin{align*}
				&RV_{T}^{\Delta,O}-Q\\
				&\quad= (RV_{T}^{\Delta,O}-\mathbb E[RV_{T}^{\Delta,O}])-(SARCV_{T}^{\Delta,O}-\mathbb E[SARCV_{T}^{\Delta,O}])\\
				&\qquad +\sum_{i=1}^{\ulT} \mathbb E \left[(OE(\Delta)X_{(i-1)\Delta})^{\otimes 2}\right]\\
				&\qquad +  (SARCV_{T}^{\Delta,O}-Q)\\
				&\quad= (RV_{T}^{\Delta,O}-\mathbb E[RV_{T}^{\Delta,O}])-(SARCV_{T}^{\Delta,O}-\mathbb E[SARCV_{T}^{\Delta,O}])\\
				&\qquad +\left(\sum_{i=1}^{\ulT} \mathbb E \left[(O E(\Delta)X_{(i-1)\Delta})^{\otimes 2}\right]- \mathbb E \left[(E(\Delta)X_{(i-1)\Delta})^{\otimes 2}\right]\right)\\
				&\qquad +\left(\sum_{i=1}^{\ulT} \mathbb E \left[(E(\Delta)X_{(i-1)\Delta})^{\otimes 2}\right]\right)\\
				&\qquad +  \left(SARCV_{T}^{\Delta,O}-\mathbb E[SARCV_{T}^{\Delta,O}])-(SARCV_{T}^{\Delta,I}-\mathbb E[SARCV_{T}^{\Delta,I}])\right)\\
				&\qquad+ (SARCV_{T}^{\Delta,I}-\mathbb E[SARCV_{T}^{\Delta,I}])\\
				&\qquad+(\mathbb E[SARCV_{T}^{\Delta,O}]-\mathbb E[SARCV_{T}^{\Delta,I}])\\
				&\quad +(\mathbb E[SARCV_{T}^{\Delta,I}]-Q)\\
				&\quad=  \mathrm{I} + \mathrm{II} + \mathrm{III} + \mathrm{IV} + \mathrm{V} + \mathrm{VI} + \mathrm{VII}.
			\end{align*}
			Terms I, II, III, IV, VI and VII are asymptotically negligible with respect to the Hilbert-Schmidt topology. 
			
			Specifically, in the case $O\neq I_h$, we have $\mathrm{I}= o_p(\Delta^{\frac 12})$ by Lemma~\ref{lem: RV-SARCV asympt equiv}, $\mathrm{II}= \mathcal O_p(h^{\min(\gamma,1)})=o_p(\Delta^{\frac 12})$ by Lemma~\ref{lem:space-conv}\ref{lem:space-conv:2} since $h=o(\Delta^{\frac 12})$ if $O=P_h$, $\mathrm{III}=o(\Delta^{1/2})$ by  Lemma~\ref{lem:time-conv}\ref{lem:time-conv:2} and Assumption \ref{ass:Ph} with $\gamma \geq 1$, $\mathrm{IV}=o_p(\Delta^{\frac 12})$ by Lemma \ref{lem: discret of SARCV}, $\mathrm{VI}=\mathcal O_p(h^{\min(\gamma,1)})=o_p(\Delta^{\frac 12})$ by Lemma~\ref{lem:space-conv}\ref{lem:space-conv:1} since $h=o(\Delta^{\frac 12})$ if $O=P_h$ and   $\mathrm{VII} = o(\Delta^{1/2})$ by Lemma~\ref{lem:time-conv}\ref{lem:time-conv:1}. 
			
			In the case $O=I_h$,
			Lemma~\ref{lem: RV-SARCV asympt equiv} yields $\mathrm{I} = o_p(\Delta^{1/2})$. By Lemma~\ref{lem:space-conv}\ref{lem:space-conv:2}, $\mathrm{II} =\mathcal O_p(\Delta^{-\frac{\tilde \epsilon}{2}} h^{\min(\psi,2)})=o_p(\Delta^{-\frac 12})$ by coupling $h=o(\Delta^{\frac{1+\tilde \epsilon}{2\min(\phi,2)}})$.  By Lemma~\ref{lem:time-conv}\ref{lem:time-conv:2}, $\mathrm{III} = o(\Delta^{1/2})$. By Lemma~\ref{lem: discret of SARCV}, $\mathrm{IV}=o_p(\Delta^{-\frac 12})$. By Lemma~\ref{lem:space-conv}\ref{lem:space-conv:1}, we have $\mathrm{VI} \le \Delta^{-\frac{\tilde \epsilon}{2}} h^{\min(\psi,2)}$. By Lemma~\ref{lem:time-conv}\ref{lem:time-conv:1},  $\mathrm{VII} = o(\Delta^{1/2})$.

			This yields 
			\begin{align*}
				\Delta^{-\frac 12} (RV_{T}^{\Delta,O}-Q)=   \Delta^{-\frac 12} (SARCV_{T}^{\Delta,I}-\mathbb E[SARCV_{T}^{\Delta,I}])+ o_p(1),
			\end{align*}
			such that the asymptotic law of $\Delta^{-\frac 12} (RV_{T}^{\Delta,O}-Q)$ is the same as the one of $ \Delta^{-\frac 12} (SARCV_{T}^{\Delta,I}-\mathbb E[SARCV_{T}^{\Delta,I}])$, which has the desired form due to Theorem D.10 in \cite{BSV2022supplement}. 
		\end{proof}
		
		\subsection{Proofs of Section \ref{Sec: Universal Goodness of fit test}}
		
		\begin{proof}[Proof of Theorem \ref{thm: Universal Goodness of fit- estimated parameters}]
			Assume that the $H_0$ in \eqref{eq: universal test estimated} is true  let $\theta_0\in \Theta$ denote the true parameter. 
			We write $T_n(\theta)=n\|RV_O^{T}/T-Q_{\theta}\|_{\cL_2(H)}^2$ and $q_{\alpha}(\theta)$ for the asymptotic quantile of $T_n(\theta)$ under $H_0$ for all $\theta \in \Theta$.  We need to prove that
			\begin{equation}
				\lim_{n\to\infty} \mathbb P[T_n^* \geq q_{1-\alpha}^*]\leq \alpha.
			\end{equation}
			
			To prove that, we first observe that  defining $T(\theta)=\left\|Q_{\theta_0}-Q_{\theta}\right\|_{\cL_2(H)}$ we have that 
			
			\begin{align*}
				\sup_{\theta \in \Theta}|(\Delta T_n(\theta))^{\frac 12}-T(\theta)|
				\leq &\left\|\frac{RV_O^{T}}{T}-Q_{\theta_0}\right\|_{\cL_2(H)} ,
			\end{align*}
			which converges to $0$ in mean square as $\Delta \to 0$.  Moreover, $T(\theta_0)=0< T(\theta_0)$ for all $\theta \in \Theta$ with $\theta \neq \theta_0$ since $\theta \mapsto \Theta$ is injective.  Then,   we find that $$\theta^*_n\overset{p}{\longrightarrow}\theta_0\quad \text{ as } n\to \infty$$ by Theorem 5.7 in \cite{vanderVaart1996}.  
			
			Now observe that since $\theta \mapsto Q_{\theta}$ is continuous with respect to the trace norm,  also $\theta\mapsto \Gamma_{\theta}$ is continuous since 
			\begin{align*}
				\|\Gamma_{\theta}-\Gamma_{\rho}\|_{\cL_1(\cL_2(H))}\lesssim \|Q_{\theta}-Q_{\rho}\|_{\cL_1(H)}(\|Q_{\theta}\|_{\cL_1(H)}+\|Q_{\rho}\|_{\cL_1(H)}).
			\end{align*}
			This implies that $\theta\mapsto N(0, \Gamma_{\theta})$ weakly (see e.g.  3.8.15  in \cite{Bogachev1998}) and hence, by the continuous mapping theorem we obtain that $T_n(\theta_n^*)\overset{d}{\longrightarrow}V$ as $n\to \infty$.  
			
			Now fix $\alpha \in (0,1)$ and let $\epsilon > 0$ be arbitrary. We first note that since $q_{1-\alpha}^* \to q_{1-\alpha}$ in probability as $n \to \infty$, and since the limiting distribution of $T_n(\theta_0)$ is continuous, the quantiles $q_{1-\alpha} = F^{-1}_{V}(1-\alpha)$ are strictly decreasing and continuous. Therefore, for any $\epsilon > 0$, we have:
			$$
			\lim_{n \to \infty} \mathbb{P} \left( q_{1-\alpha}^* < q_{1-\alpha-\epsilon} \right) 
			\leq 
			\lim_{n \to \infty} \mathbb{P} \left( \left| q_{1-\alpha}^* - q_{1-\alpha} \right| > q_{1-\alpha} - q_{1-\alpha - \epsilon} \right) 
			= 0.
			$$
			
			Now observe that for all $n$,
			$$
			\mathbb{P} \left( T_n^* \geq q_{1-\alpha}^* \right) 
			\leq 
			\mathbb{P} \left( T_n(\theta_0) \geq q_{1-\alpha}^* \right).
			$$
			We split the right-hand side using the event $\{ q_{1-\alpha}^* < q_{1-\alpha-\epsilon} \}$ to get
			$$
			\mathbb{P} \left( T_n(\theta_0) \geq q_{1-\alpha}^* \right) 
			\leq 
			\mathbb{P} \left( q_{1-\alpha}^* < q_{1-\alpha - \epsilon} \right) 
			+ 
			\mathbb{P} \left( T_n(\theta_0) \geq q_{1-\alpha - \epsilon} \right).
			$$
			
			Taking the limit superior as $n \to \infty$, we use that 
			$\mathbb{P} \left( q_{1-\alpha}^* < q_{1-\alpha - \epsilon} \right) \to 0$,
			and by weak convergence of $T_n(\theta_0)$ to $V$, 
			$$
			\lim_{n \to \infty} \mathbb{P} \left( T_n(\theta_0) \geq q_{1-\alpha - \epsilon} \right) = \alpha - \epsilon.
			$$
			Hence,
			$$
			\limsup_{n \to \infty} \mathbb{P} \left( T_n^* \geq q_{1-\alpha}^* \right) 
			\leq 
			\alpha - \epsilon.
			$$
			
			Since this holds for arbitrary $\epsilon > 0$, we conclude that
			$$
			\limsup_{n \to \infty} \mathbb{P} \left( T_n^* \geq q_{1-\alpha}^* \right) \leq \alpha,
			$$
			which proves that the test has asymptotic level at most $\alpha$.
		\end{proof}
		
	\end{appendix}
	
\end{document}